\newtheorem{thm}{Theorem}[section]
\newtheorem{lem}[thm]{Lemma}
\newtheorem{prop}[thm]{Proposition}
\theoremstyle{definition}
\newtheorem{defn}[thm]{Definition}
\newtheorem{example}[thm]{Example}
\theoremstyle{remark}
\newtheorem{rem}[thm]{Remark}
\numberwithin{equation}{section}
\begin{document}
\title[Multi-dimensional Weyl almost periodic type functions...]{Multi-dimensional Weyl almost periodic type functions and applications}

\author{Vladimir E. Fedorov}
\address{Chelyabinsk State State University, Kashirin Brothers St. 129, Chelyabinsk, 454001 Russia}
\email{kar@csu.ru}

\author{Marko Kosti\' c}
\address{Faculty of Technical Sciences,
University of Novi Sad,
Trg D. Obradovi\' ca 6, 21125 Novi Sad, Serbia}
\email{marco.s@verat.net}

{\renewcommand{\thefootnote}{} \footnote{2010 {\it Mathematics
Subject Classification.} 42A75, 43A60, 47D99.
\\ \text{  }  \ \    {\it Key words and phrases.} Multi-dimensional Weyl almost periodic functions,  Lebesgue spaces with variable exponents,
abstract Volterra integro-differential equations.
\\  \text{  }  
Vladimir E. Fedorov is partially supported by the Russian Foundation for Basic Research, grant 19-01-00244. Marko Kosti\' c is partially supported by grant 451-03-68/2020/14/200156 of Ministry
of Science and Technological Development, Republic of Serbia.}}

\begin{abstract}
In this paper, we analyze multi-dimensional Weyl almost periodic type functions in Lebesgue spaces with variable exponents.
The introduced classes seem to be new and not considered elsewhere even in the constant coefficient case.  We provide
certain applications to
the abstract Volterra integro-differential equations in Banach spaces.
\end{abstract}
\maketitle

\section{Introduction and preliminaries}

The class of almost periodic functions was introduced by the Danish mathematician H. Bohr around 1924-1926 and later reconsidered by many others. Suppose that $\Lambda$ is either ${\mathbb R}$ or $[0,\infty)$ as well as that $f :\Lambda \rightarrow X$ is a given continuous function, where $X$ is a complex Banach space equipped with the norm $\| \cdot \|$. Given a real number $\varepsilon>0,$ we say that a positive real number $\tau>0$ is a $\varepsilon$-period for $f(\cdot)$ if and only if\index{$\varepsilon$-period}
$
\| f(t+\tau)-f(t) \| \leq \varepsilon,$ $ t\in \Lambda.
$
The set constituted of all $\varepsilon$-periods for $f(\cdot)$ is denoted by $\vartheta(f,\varepsilon).$ We say that the function $f(\cdot)$ is almost periodic if and only if for each $\varepsilon>0$ the set $\vartheta(f,\varepsilon)$ is relatively dense in $[0,\infty),$ which means that
there exists a finite real number $l>0$ such that any subinterval of $[0,\infty)$ of length $l$ meets $\vartheta(f,\varepsilon)$. For further information about almost periodic functions and their applications, we refer the reader to \cite{besik, diagana, fink, gaston, guter, nova-mono, 188, pankov, 30}.

In \cite{marko-manuel-ap}, we have investigated 
various classes of almost periodic functions of form $F : \Lambda \times X\rightarrow Y,$ where $(Y,\|\cdot \|_{Y})$ is a complex Banach spaces and $\emptyset \neq  \Lambda \subseteq {\mathbb R}^{n}$ (the region $ \Lambda$ does not generally satisfy the semigroup property $ \Lambda+ \Lambda\subseteq  \Lambda$ or contain the zero vector). The main encouragement for writing our recent research article \cite{stmarko-manuel-ap} (a joint work with A. Ch\'avez, K. Khalil and M. Pinto), which concerns the multi-dimensional Stepanov almost periodic type functions of form $F : \Lambda \times X\rightarrow Y,$ and this research article, which concerns the multi-dimensional Weyl almost periodic type functions of the same form, was our impossibility to locate any relevant reference in the existing literature which concerns these classes of almost periodic functions (here, we would like to mention two recent papers \cite{TIMOL} by D. Lenz, T. Spindeler, N. Strungaru and \cite{TIMO} by T. Spindeler, where the authors have analyzed the Stepanov and Weyl almost periodic functions on locally compact
Abelian groups).  

This paper aims, therefore, to continue the research studies \cite{marko-manuel-ap}-\cite{stmarko-manuel-ap} by developing the basic theory of multi-dimensional Weyl almost periodic type functions in Lebesgue spaces with variable exponents. As mentioned in the abstract, the introduced classes of functions seem to be not considered elsewhere even in the constant coefficient case (for one-dimensional Weyl almost periodic type functions and their applications, we refer the reader to \cite{sead-abas, dedaci, mellah, besik, besik-bor, bohr-folner, danilov-weyl, fedorov-novi, guter, iwanik, nova-mono, nova-selected, composition, bjma, weyl-varible, 188, ursell}, as well as the survey article \cite{deda} by J. Andres, A. M. Bersani, R. F. Grande, the pioneering papers by A. S. Kovanko \cite{kovanko}-\cite{kovanko1953} and the master thesis of J. Stryja \cite{jstryja}).

The organization and main ideas of this paper can be briefly described as follows. In Subsection \ref{karambita}, we collect the basic definitions and results from the theory of Lebesgue spaces with variable exponents. In Definition \ref{marinavis}-Definition \ref{marinavis2} [Definition \ref{marinavist}-Definition \ref{marinavis2t}],
we continue our recent analysis of Weyl almost periodic functions \cite{weyl-varible} by introducing the classes 
$e-W^{(p({\bf u}),\phi,{\mathbb F})}_{\Omega,\Lambda',{\mathcal B}}(\Lambda\times X :Y)$ and $e-W^{(p({\bf u}),\phi,{\mathbb F})_{i}}_{\Omega,\Lambda',{\mathcal B}}(\Lambda\times X :Y)$ [$e-W^{[p({\bf u}),\phi,{\mathbb F}]}_{\Omega,\Lambda',{\mathcal B}}(\Lambda\times X :Y)$ and $e-W^{[p({\bf u}),\phi,{\mathbb F}]_{i}}_{\Omega,\Lambda',{\mathcal B}}(\Lambda\times X :Y)$] of Weyl almost periodic functions, where $i=1,2.$
We further analyze these classes in Section \ref{nijenaivnokas}. The main result of this section is Theorem \ref{shokiran} (see also Theorem \ref{shokiran1}), in which we investigate the convolution invariance of space $(e-)W^{(p_{1}({\bf u}),\phi,{\mathbb F}_{1})}_{\Omega,\Lambda',{\mathcal B}}({\mathbb R}^{n}\times X :Y);$ this is a crucial result for our applications to the multi-dimensional heat equation. With the exception of this result, all other structural results of ours are given in Section \ref{klasika}, in which we investigate the usual concept of (equi-)Weyl-$p$-almost periodicity and the corresponding class of function $(e-)W_{ap,\Lambda',{\mathcal B}}^{p}(\Lambda \times X : Y),$
with the constant exponent $p({\bf u})\equiv p\in [1,\infty).$ In Subsection \ref{dbdb}, we investigate the Weyl $p$-distance and Weyl $p$-boundedness, while in Subsection \ref{dbdb1} we investigate the Weyl $p$-normality and Weyl approximations by trigonometric polynomials. The main results of this subsection are Theorem \ref{567890}, Proposition \ref{enemyof the enemy}-Proposition \ref{nenadjezdic}, Proposition \ref{marencew} and Proposition \ref{kovanko-supere}. In Subsection \ref{prckojam}, we analyze the basic results about the existence of Bohr-Fourier coefficients for multi-dimensional Weyl almost periodic functions.
Section \ref{manuel} is reserved for giving some applications of our abstract theoretical results to the abstract Volterra integro-differential equations in Banach spaces. The paper does not intend to be exhaustively complete and we present several useful conclusions, remarks and intriguing topics not discussed here in Section \ref{prinuda}. We also propose some open problems.

Before explaining the notation used in the paper,
the authors would like to express their sincere thanks to Prof. A. Ch\'avez, M. T. Khalladi, M. Pinto, A. Rahmani and D. Velinov for many useful comments and observations. Special thanks go to Prof. Kamal Khalil, who proposed the use of kernel $K(t,s,\cdot,\cdot)$ in the third point of Section \ref{manuel}.

We assume henceforth
that $(X,\| \cdot \|)$ and $(Y, \|\cdot\|_Y)$ are complex Banach spaces. By
$L(X,Y)$ we denote the Banach algebra of all bounded linear operators from $X$ into
$Y$ with $L(X,X)$ being denoted $L(X)$. If $A: D(A) \subseteq X \mapsto X$ is a closed linear operator,
then its nullspace (or kernel) and range will be denoted respectively by
$N(A)$ and $R(A)$. 
The convolution product $\ast$ of measurable functions $f: {\mathbb R}^{n} \rightarrow {\mathbb C}$ and $g: {\mathbb R}^{n} \rightarrow X$ is defined by $(f\ast g)({\bf t}):=\int_{{\mathbb R}^{n}}f({\bf t}-{\bf s})g({\bf s})\,
d{\bf s},$ ${\bf t}\in {\mathbb R}^{n},$ whenever the limit exists; $\langle \cdot, \cdot \rangle$ denotes the usual inner product in ${\mathbb R}^{n}.$ If ${\sc X},\ {\sc Y} \neq \emptyset,$ then we set ${\sc Y}^{\sc X}:=\{ f \, | \, f: {\sc X} \rightarrow {\sc Y}\};$
$\chi_{A}(\cdot)$ denotes the characteristic function of a set $A\subseteq {\mathbb R}^{n}.$

\subsection{Lebesgue spaces with variable exponents
$L^{p(x)}$}\label{karambita}

Let $\emptyset \neq \Omega \subseteq {\mathbb R}^{n}$ be a nonempty Lebesgue measurable subset and let 
$M(\Omega  : X)$ denote the collection of all measurable functions $f: \Omega \rightarrow X;$ $M(\Omega):=M(\Omega : {\mathbb R}).$ Further on, ${\mathcal P}(\Omega)$ denotes the vector space of all Lebesgue measurable functions $p : \Omega \rightarrow [1,\infty].$
For any $p\in {\mathcal P}(\Omega)$ and $f\in M(\Omega : X),$ we define
$$
\varphi_{p(x)}(t):=\left\{
\begin{array}{l}
t^{p(x)},\quad t\geq 0,\ \ 1\leq p(x)<\infty,\\ \\
0,\quad 0\leq t\leq 1,\ \ p(x)=\infty,\\ \\
\infty,\quad t>1,\ \ p(x)=\infty 
\end{array}
\right.
$$
and
$$
\rho(f):=\int_{\Omega}\varphi_{p(x)}(\|f(x)\|)\, dx .
$$
We define the Lebesgue space 
$L^{p(x)}(\Omega : X)$ with variable exponent
through
$$
L^{p(x)}(\Omega : X):=\Bigl\{f\in M(\Omega : X): \lim_{\lambda \rightarrow 0+}\rho(\lambda f)=0\Bigr\}.
$$
Equivalently,
\begin{align*}
L^{p(x)}(\Omega : X)=\Bigl\{f\in M(\Omega : X):  \mbox{ there exists }\lambda>0\mbox{ such that }\rho(\lambda f)<\infty\Bigr\};
\end{align*}
see, e.g., \cite[p. 73]{variable}.
For every $u\in L^{p(x)}(\Omega : X),$ we introduce the Luxemburg norm of $u(\cdot)$ by
$$
\|u\|_{p(x)}:=\|u\|_{L^{p(x)}(\Omega :X)}:=\inf\Bigl\{ \lambda>0 : \rho(f/\lambda)    \leq 1\Bigr\}.
$$
Equipped with the above norm, the space $
L^{p(x)}(\Omega : X)$ becomes a Banach space (see e.g. \cite[Theorem 3.2.7]{variable} for the scalar-valued case), coinciding with the usual Lebesgue space $L^{p}(\Omega : X)$ in the case that $p(x)=p\geq 1$ is a constant function.
Further on, for any $p\in M(\Omega),$ we define
$$
p^{-}:=\text{essinf}_{x\in \Omega}p(x) \ \ \mbox{ and } \ \ p^{+}:=\text{esssup}_{x\in \Omega}p(x).
$$
Set
$$
D_{+}(\Omega ):=\bigl\{ p\in M(\Omega): 1 \leq p^{-}\leq p(x) \leq p^{+} <\infty \mbox{ for a.e. }x\in \Omega \bigr \}.
$$
For $p\in D_{+}([0,1]),$ the space $
L^{p(x)}(\Omega : X)$ behaves nicely, with almost all fundamental properties of the Lesbesgue space with constant exponent $
L^{p}(\Omega : X)$ being retained; in this case, we know that 
$$
L^{p(x)}(\Omega : X)=\Bigl\{f\in M(\Omega : X)  \, ; \,  \mbox{ for all }\lambda>0\mbox{ we have }\rho(\lambda f)<\infty\Bigr\}.
$$

We will use the following lemma (cf. \cite{variable} for the scalar-valued case):

\begin{lem}\label{aux}
\begin{itemize}
\item[(i)] (The H\"older inequality) Let $p,\ q,\ r \in {\mathcal P}(\Omega)$ such that
$$
\frac{1}{q(x)}=\frac{1}{p(x)}+\frac{1}{r(x)},\quad x\in \Omega .
$$
Then, for every $u\in L^{p(x)}(\Omega : X)$ and $v\in L^{r(x)}(\Omega),$ we have $uv\in L^{q(x)}(\Omega : X)$
and
\begin{align*}
\|uv\|_{q(x)}\leq 2 \|u\|_{p(x)}\|v\|_{r(x)}.
\end{align*}
\item[(ii)] Let $\Omega $ be of a finite Lebesgue's measure and let $p,\ q \in {\mathcal P}(\Omega)$ such $q\leq p$ a.e. on $\Omega.$ Then
 $L^{p(x)}(\Omega : X)$ is continuously embedded in $L^{q(x)}(\Omega : X),$ with the constant of embedding less or equal to $2(1+m(\Omega)).$
\item[(iii)] Let $f\in L^{p(x)}(\Omega : X),$ $g\in M(\Omega : X)$ and $0\leq \|g\| \leq \|f\|$ a.e. on $\Omega .$ Then $g\in L^{p(x)}(\Omega : X)$ and $\|g\|_{p(x)}\leq \|f\|_{p(x)}.$
\end{itemize}
\end{lem}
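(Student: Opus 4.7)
The plan is to reduce all three assertions to their scalar-valued counterparts from \cite{variable}, observing that the modular $\rho(f)$ depends on $f$ only through the pointwise norm $\|f(\cdot)\|$; thus, substituting $|f(x)|$ by $\|f(x)\|$ in the scalar proofs immediately produces the vector-valued versions.

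Part (iii) I would handle first, as it is immediate from the definition of the Luxemburg norm: the hypothesis $\|g(x)\| \leq \|f(x)\|$ a.e., combined with the monotonicity of $t\mapsto \varphi_{p(x)}(t)$, gives $\rho(g/\lambda)\leq \rho(f/\lambda)$ for every $\lambda>0$, so every $\lambda$ admissible in the infimum defining $\|f\|_{p(x)}$ is also admissible for $\|g\|_{p(x)}$.

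For (i), I would first normalize by setting $\tilde u := u/\|u\|_{p(x)}$ and $\tilde v := v/\|v\|_{r(x)}$, so that $\rho(\tilde u), \rho(\tilde v)\leq 1$. On the set where $p(x), r(x) < \infty$, I apply Young's inequality with exponents $p(x)/q(x)$ and $r(x)/q(x)$ (which are conjugate thanks to the hypothesis $q(x)/p(x)+q(x)/r(x)=1$) to obtain
$$
\bigl(\|\tilde u(x)\|\cdot |\tilde v(x)|\bigr)^{q(x)} \leq \frac{q(x)}{p(x)}\|\tilde u(x)\|^{p(x)}+\frac{q(x)}{r(x)}|\tilde v(x)|^{r(x)}.
$$
Using $q\leq p$ and $q\leq r$ a.e., integration gives a modular bound $\rho(\tilde u\tilde v)\leq 2$; since $q(\cdot)\geq 1$, the convexity estimate $(t/2)^{q(x)}\leq t^{q(x)}/2$ yields $\rho(\tilde u\tilde v/2)\leq 1$, hence $\|\tilde u\tilde v\|_{q(x)}\leq 2$ and the claimed inequality. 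The pieces where $p(x)$ or $r(x)$ equals $\infty$ are treated separately using the pointwise bounds $\|\tilde u(x)\|\leq 1$ or $|\tilde v(x)|\leq 1$ valid there.

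Part (ii) is then a corollary of (i): with $r(x):=p(x)q(x)/(p(x)-q(x))$ (interpreted as $\infty$ where $p(x)=q(x)$) one has $1/q(x)=1/p(x)+1/r(x)$, and applying (i) to the factorisation $f=f\cdot 1$ yields $\|f\|_{q(x)}\leq 2\|f\|_{p(x)}\|1\|_{r(x)}$. The remaining bound $\|1\|_{r(x)}\leq 1+m(\Omega)$ follows by testing the Luxemburg infimum with $\lambda=\max(1,m(\Omega))$: for such a $\lambda$ one has $(1/\lambda)^{r(x)}\leq 1/\lambda$ pointwise, so $\rho(\chi_\Omega/\lambda)\leq m(\Omega)/\lambda\leq 1$. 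The main obstacle throughout is the bookkeeping around the exceptional sets where the exponents attain the value $\infty$; everything else is a faithful translation of the scalar arguments.
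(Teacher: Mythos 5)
Your proof is correct and follows exactly the route the paper intends: the lemma is stated without proof, with a citation to the scalar-valued case in the variable-exponent literature, and your observation that the modular $\rho$ sees $f$ only through $\|f(\cdot)\|$ is precisely why that reduction works. The three arguments you sketch (monotonicity of the Luxemburg infimum for (iii), normalization plus pointwise Young's inequality with the exceptional sets where an exponent is infinite for (i), and the factorisation $f=f\cdot\chi_{\Omega}$ with the estimate $\|\chi_{\Omega}\|_{r(x)}\leq\max(1,m(\Omega))$ for (ii)) are the standard ones and are carried out correctly.
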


We will use the following simple lemma, whose proof can be omitted:

\begin{lem}\label{aux123}
Suppose that $f\in L^{p(x)}(\Omega : X)$ and $A\in L(X,Y).$ 
Then $Af \in L^{p(x)}(\Omega : Y)$ and \\
$\|Af\|_{L^{p(x)}(\Omega : Y)}\leq \|A\| \cdot \|f\|_{L^{p(x)}(\Omega : X)}.$
\end{lem}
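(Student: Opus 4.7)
The statement is a routine functoriality of the Luxemburg norm under bounded linear operators, so the plan is short.

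First, I would check measurability: since $A \in L(X,Y)$ is continuous and $f : \Omega \to X$ is strongly measurable, the composition $Af : \Omega \to Y$ is strongly measurable, so $Af \in M(\Omega : Y)$. Next, from the operator norm inequality we have the pointwise bound $\|Af(x)\|_{Y} \leq \|A\|\cdot\|f(x)\|$ for a.e.\ $x \in \Omega$.

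The main step is then a direct manipulation of the Luxemburg norm. If $\|A\|=0$ the conclusion is trivial, so assume $\|A\|>0$. For any $\lambda>0$ with $\rho(f/\lambda)\leq 1$ (where $\rho$ uses $\|\cdot\|$ on $X$), we substitute $\mu:=\|A\|\lambda$ and use monotonicity of $t\mapsto \varphi_{p(x)}(t)$ in $t$ together with the pointwise bound to obtain
$$
\int_{\Omega}\varphi_{p(x)}\!\left(\frac{\|Af(x)\|_{Y}}{\mu}\right)dx \;\leq\; \int_{\Omega}\varphi_{p(x)}\!\left(\frac{\|f(x)\|}{\lambda}\right)dx \;\leq\; 1.
$$
This shows $Af \in L^{p(x)}(\Omega : Y)$ and $\|Af\|_{L^{p(x)}(\Omega:Y)}\leq \|A\|\lambda$. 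Taking the infimum over all admissible $\lambda$ yields $\|Af\|_{L^{p(x)}(\Omega:Y)}\leq \|A\|\cdot\|f\|_{L^{p(x)}(\Omega:X)}$.

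Alternatively, one can appeal to Lemma \ref{aux}(iii) applied to the $Y$-valued function $Af$ against the $X$-valued majorant $\|A\|f$ (the argument of that lemma depends only on the norms of the functions, so the extension across codomains is cosmetic). No real obstacle is expected; the only point worth noting is the measurability of $Af$, and the treatment of the edge case $\|A\|=0$.
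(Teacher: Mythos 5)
Your proof is correct, and the paper itself offers no proof to compare against: Lemma \ref{aux123} is stated there with the remark that its proof ``can be omitted,'' and your argument (strong measurability of $Af$, the pointwise bound $\|Af(x)\|_{Y}\leq \|A\|\,\|f(x)\|$, monotonicity of $\varphi_{p(x)}$, and the rescaling $\mu=\|A\|\lambda$ in the Luxemburg infimum) is exactly the standard computation the authors evidently had in mind. The alternative route through Lemma \ref{aux}(iii) combined with the homogeneity $\|\,\|A\|f\,\|_{p(x)}=\|A\|\,\|f\|_{p(x)}$ is equally valid, and your handling of the edge case $\|A\|=0$ and of the measurability point is appropriate.
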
 

For further information concerning the Lebesgue spaces with variable exponents
$L^{p(x)},$ we refer the reader to \cite{variable}, \cite{fan-zhao} and \cite{doktor}; basic source of information on generalized almost periodic functions in 
Lebesgue spaces with variable exponents can be obtained by consulting \cite{stmarko-manuel-ap, toka-mbape, toka-mbape-prim, m-zitane, m-zitane-prim, weyl-varible, dumath, dumath2} and the forthcoming monograph \cite{nova-selected}.

\section{Multi-dimensional Weyl almost periodic type functions}\label{nijenaivnokas}

In this paper, we will always assume that ${\mathcal B}$ is a non-empty collection of certain subsets of $X$ such that for each $x\in X$ there exists $B\in {\mathcal B}$ such that $x\in B.$ In the first concept, we assume that the following condition holds:
\begin{itemize}
\item[(WM1):]
$\emptyset \neq \Lambda \subseteq {\mathbb R}^{n},$ $\emptyset \neq \Lambda' \subseteq {\mathbb R}^{n},$ 
$\emptyset \neq \Omega \subseteq {\mathbb R}^{n}$ is a Lebesgue measurable set such that $m(\Omega)>0,$ $p\in {\mathcal P}(\Lambda),$ 
$\Lambda' +\Lambda+ l\Omega \subseteq \Lambda,$ $\Lambda+ l\Omega \subseteq \Lambda$ for all $l>0,$
$\phi : [0,\infty) \rightarrow [0,\infty)$ and ${\mathbb F}: (0,\infty) \times \Lambda \rightarrow (0,\infty).$ 
\end{itemize}

We introduce the following classes of multi-dimensional Weyl almost periodic functions (the notion can be further generalized following the approach obeyed in Definition \ref{marinavisconti}; all established results can be slightly generalized in this framework):

\begin{defn}
\label{marinavis}
\begin{itemize}
\item[(i)]
By $e-W^{(p({\bf u}),\phi,{\mathbb F})}_{\Omega,\Lambda',{\mathcal B}}(\Lambda\times X :Y)$ we denote the set consisting of all functions $F : \Lambda \times X \rightarrow Y$ such that, for every $\epsilon>0$ and $B\in {\mathcal B},$ there exist two finite real numbers
$l>0$
and
$L>0$ such that for each ${\bf t}_{0}\in \Lambda'$ there exists $\tau \in B({\bf t}_{0},L)\cap \Lambda'$ such that
\begin{align}\label{whatusup}
\sup_{x\in B}\sup_{{\bf t}\in \Lambda}{\mathbb F}(l,{\bf t})\phi\Bigl( \bigl\| F({\bf \tau}+{\bf u};x)-F({\bf u};x) \bigr\|_{Y}\Bigr)_{L^{p({\bf u})}({\bf t}+l\Omega)} <\epsilon.
\end{align}
\index{space!$e-W^{(p({\bf u}),\phi,{\mathbb F})}_{\Omega,\Lambda',{\mathcal B}}(\Lambda\times X :Y)$}
\item[(ii)] By $W^{(p({\bf u}),\phi,{\mathbb F})}_{\Omega,\Lambda',{\mathcal B}}(\Lambda\times X :Y)$ we denote the set consisting of all functions $F : \Lambda \times X \rightarrow Y$ such that, for every $\epsilon>0$ and $B\in {\mathcal B},$ there exists a finite real number
$L>0$ such that for each ${\bf t}_{0}\in \Lambda'$ there exists $\tau \in B({\bf t}_{0},L) \cap \Lambda'$ such that
\begin{align*}
\limsup_{l\rightarrow +\infty}\sup_{x\in B}\sup_{{\bf t}\in \Lambda}{\mathbb F}(l,{\bf t})\phi\Bigl( \bigl\| F({\bf \tau}+{\bf u};x)-F({\bf u};x) \bigr\|_{Y}\Bigr)_{L^{p({\bf u})}({\bf t}+l\Omega)} 
<\epsilon.
\end{align*}
\index{space!$W^{(p({\bf u}),\phi,{\mathbb F})}_{\Omega,\Lambda',{\mathcal B}}(\Lambda\times X :Y)$}
\end{itemize}
\end{defn}

\begin{defn}
\label{marinavis1}
\begin{itemize}
\item[(i)]
By $e-W^{(p({\bf u}),\phi,{\mathbb F})_{1}}_{\Omega,\Lambda',{\mathcal B}}(\Lambda\times X :Y)$ we denote the set consisting of all functions $F : \Lambda \times X \rightarrow Y$ such that, for every $\epsilon>0$ and $B\in {\mathcal B},$ there exist two finite real numbers
$l>0$
and
$L>0$ such that for each ${\bf t}_{0}\in \Lambda'$ there exists $\tau \in B({\bf t}_{0},L)\cap \Lambda'$ such that
\begin{align*}
\sup_{x\in B}\sup_{{\bf t}\in \Lambda}{\mathbb F}(l,{\bf t})\phi\Bigl( \bigl\| F({\bf \tau}+{\bf u};x)-F({\bf u};x) \bigr\|_{L^{p({\bf u})}({\bf t}+l\Omega :Y)} \Bigr)
<\epsilon.
\end{align*}
\index{space!$e-W^{(p({\bf u}),\phi,{\mathbb F})_{1}}_{\Omega,\Lambda',{\mathcal B}}(\Lambda\times X :Y)$}
\item[(ii)] By $W^{(p({\bf u}),\phi,{\mathbb F})_{1}}_{\Omega,\Lambda',{\mathcal B}}(\Lambda\times X :Y)$ we denote the set consisting of all functions $F : \Lambda \times X \rightarrow Y$ such that, for every $\epsilon>0$ and $B\in {\mathcal B},$ there exists a finite real number
$L>0$ such that for each ${\bf t}_{0}\in \Lambda'$ there exists $\tau \in B({\bf t}_{0},L) \cap \Lambda'$ such that
\begin{align*}
\limsup_{l\rightarrow +\infty}\sup_{x\in B}\sup_{{\bf t}\in \Lambda}{\mathbb F}(l,{\bf t})\phi\Bigl( \bigl\| F({\bf \tau}+{\bf u};x)-F({\bf u};x) \bigr\|_{L^{p({\bf u})}({\bf t}+l\Omega:Y)} \Bigr)
<\epsilon.
\end{align*}
\index{space!$W^{(p({\bf u}),\phi,{\mathbb F})_{1}}_{\Omega,\Lambda',{\mathcal B}}(\Lambda\times X :Y)$}
\end{itemize}
\end{defn}

\begin{defn}
\label{marinavis2}
\begin{itemize}
\item[(i)]
By $e-W^{(p({\bf u}),\phi,{\mathbb F})_{2}}_{\Omega,\Lambda',{\mathcal B}}(\Lambda\times X :Y)$ we denote the set consisting of all functions $F : \Lambda \times X \rightarrow Y$ such that, for every $\epsilon>0$ and $B\in {\mathcal B},$ there exist two finite real numbers
$l>0$
and
$L>0$ such that for each ${\bf t}_{0}\in \Lambda'$ there exists $\tau \in B({\bf t}_{0},L)\cap \Lambda'$ such that
\begin{align*}
\sup_{x\in B}\sup_{{\bf t}\in \Lambda}\phi\Bigl( {\mathbb F}(l,{\bf t}) \bigl\| F({\bf \tau}+{\bf u};x)-F({\bf u};x) \bigr\|_{L^{p({\bf u})}({\bf t}+l\Omega:Y)} \Bigr)
<\epsilon.
\end{align*}
\index{space!$e-W^{(p({\bf u}),\phi,{\mathbb F})_{2}}_{\Omega,\Lambda',{\mathcal B}}(\Lambda\times X :Y)$}
\item[(ii)] By $W^{(p({\bf u}),\phi,{\mathbb F})_{2}}_{\Omega,\Lambda',{\mathcal B}}(\Lambda\times X :Y)$ we denote the set consisting of all functions $F : \Lambda \times X \rightarrow Y$ such that, for every $\epsilon>0$ and $B\in {\mathcal B},$ there exists a finite real number
$L>0$ such that for each ${\bf t}_{0}\in \Lambda'$ there exists $\tau \in B({\bf t}_{0},L) \cap \Lambda'$ such that
\begin{align*}
\limsup_{l\rightarrow +\infty}\sup_{x\in B}\sup_{{\bf t}\in \Lambda}\phi\Bigl( {\mathbb F}(l,{\bf t})\bigl\| F({\bf \tau}+{\bf u};x)-F({\bf u};x) \bigr\|_{L^{p({\bf u})}({\bf t}+l\Omega:Y)} \Bigr)
<\epsilon.
\end{align*}
\index{space!$W^{()p({\bf u}),\phi,{\mathbb F})_{2}}_{\Omega,\Lambda',{\mathcal B}}(\Lambda\times X :Y)$}
\end{itemize}
\end{defn}

In the second concept, we aim to ensure the translation invariance of multi-dimensional Weyl almost periodic functions. We will assume now
that the following condition holds:
\begin{itemize}
\item[(WM2):]
$\emptyset \neq \Lambda \subseteq {\mathbb R}^{n},$ $\emptyset \neq \Lambda' \subseteq {\mathbb R}^{n},$ 
$\emptyset \neq \Omega \subseteq {\mathbb R}^{n}$ is a Lebesgue measurable set such that $m(\Omega)>0,$ $p\in {\mathcal P}(\Omega),$ 
$\Lambda' +\Lambda+ l\Omega\subseteq \Lambda,$ $\Lambda+ l\Omega \subseteq \Lambda$ for all $l>0,$
$\phi : [0,\infty) \rightarrow [0,\infty)$ and ${\mathbb F}: (0,\infty) \times \Lambda \rightarrow (0,\infty).$
\end{itemize} 
We introduce the following classes of functions:

\begin{defn}
\label{marinavist}
\begin{itemize}
\item[(i)]
By $e-W^{[p({\bf u}),\phi,{\mathbb F}]}_{\Omega,\Lambda',{\mathcal B}}(\Lambda\times X :Y)$ we denote the set consisting of all functions $F : \Lambda \times X \rightarrow Y$ such that, for every $\epsilon>0$ and $B\in {\mathcal B},$ there exist two finite real numbers
$l>0$
and
$L>0$ such that for each ${\bf t}_{0}\in \Lambda'$ there exists $\tau \in B({\bf t}_{0},L)\cap \Lambda'$ such that
\begin{align*}
\sup_{x\in B}\sup_{{\bf t}\in \Lambda}l^{n}{\mathbb F}(l,{\bf t})\phi\Bigl( \bigl\| F({\bf t}+{\bf \tau}+l{\bf u};x)-F({\bf t}+l{\bf u};x) \bigr\|_{Y}\Bigr)_{L^{p({\bf u})}(\Omega)} <\epsilon.
\end{align*}
\index{space!$e-W^{[p({\bf u}),\phi,{\mathbb F}]}_{\Omega,\Lambda',{\mathcal B}}(\Lambda\times X :Y)$}
\item[(ii)] By $W^{[p({\bf u}),\phi,{\mathbb F}]}_{\Omega,\Lambda',{\mathcal B}}(\Lambda\times X :Y)$ we denote the set consisting of all functions $F : \Lambda \times X \rightarrow Y$ such that, for every $\epsilon>0$ and $B\in {\mathcal B},$ there exists a finite real number
$L>0$ such that for each ${\bf t}_{0}\in \Lambda'$ there exists $\tau \in B({\bf t}_{0},L) \cap \Lambda'$ such that
\begin{align*}
\limsup_{l\rightarrow +\infty}\sup_{x\in B}\sup_{{\bf t}\in \Lambda}l^{n}{\mathbb F}(l,{\bf t})\phi\Bigl( \bigl\| F({\bf t}+{\bf \tau}+l{\bf u};x)-F({\bf t}+l{\bf u};x) \bigr\|_{Y}\Bigr)_{L^{p({\bf u})}(\Omega:Y)} 
<\epsilon.
\end{align*}
\index{space!$W^{[p({\bf u}),\phi,{\mathbb F}]}_{\Omega,\Lambda',{\mathcal B}}(\Lambda\times X :Y)$}
\end{itemize}
\end{defn}

\begin{defn}
\label{marinavis1t}
\begin{itemize}
\item[(i)]
By $e-W^{[p({\bf u}),\phi,{\mathbb F}]_{1}}_{\Omega,\Lambda',{\mathcal B}}(\Lambda\times X :Y)$ we denote the set consisting of all functions $F : \Lambda \times X \rightarrow Y$ such that, for every $\epsilon>0$ and $B\in {\mathcal B},$ there exist two finite real numbers
$l>0$
and
$L>0$ such that for each ${\bf t}_{0}\in \Lambda'$ there exists $\tau \in B({\bf t}_{0},L)\cap \Lambda'$ such that
\begin{align*}
\sup_{x\in B}\sup_{{\bf t}\in \Lambda}l^{n}{\mathbb F}(l,{\bf t})\phi\Bigl( \bigl\| F({\bf t}+{\bf \tau}+l{\bf u};x)-F({\bf t}+l{\bf u};x) \bigr\|_{L^{p({\bf u})}(\Omega:Y)} \Bigr)
<\epsilon.
\end{align*}
\index{space!$e-W^{[p({\bf u}),\phi,{\mathbb F}]_{1}}_{\Omega,\Lambda',{\mathcal B}}(\Lambda\times X :Y)$}
\item[(ii)] By $W^{[p({\bf u}),\phi,{\mathbb F}]_{1}}_{\Omega,\Lambda',{\mathcal B}}(\Lambda\times X :Y)$ we denote the set consisting of all functions $F : \Lambda \times X \rightarrow Y$ such that, for every $\epsilon>0$ and $B\in {\mathcal B},$ there exists a finite real number
$L>0$ such that for each ${\bf t}_{0}\in \Lambda'$ there exists $\tau \in B({\bf t}_{0},L) \cap \Lambda'$ such that
\begin{align*}
\limsup_{l\rightarrow +\infty}\sup_{x\in B}\sup_{{\bf t}\in \Lambda}l^{n}{\mathbb F}(l,{\bf t})\phi\Bigl( \bigl\| F({\bf t}+{\bf \tau}+{\bf u};x)-F({\bf t}+{\bf u};x) \bigr\|_{L^{p({\bf u})}(l\Omega:Y)} \Bigr)
<\epsilon.
\end{align*}
\index{space!$W^{p({\bf u}),\phi,{\mathbb F},1}_{\Omega,\Lambda',{\mathcal B}}(\Lambda\times X :Y)$}
\end{itemize}
\end{defn}

\begin{defn}
\label{marinavis2t}
\begin{itemize}
\item[(i)]
By $e-W^{[p({\bf u}),\phi,{\mathbb F}]_{2}}_{\Omega,\Lambda',{\mathcal B}}(\Lambda\times X :Y)$ we denote the set consisting of all functions $F : \Lambda \times X \rightarrow Y$ such that, for every $\epsilon>0$ and $B\in {\mathcal B},$ there exist two finite real numbers
$l>0$
and
$L>0$ such that for each ${\bf t}_{0}\in \Lambda'$ there exists $\tau \in B({\bf t}_{0},L)\cap \Lambda'$ such that
\begin{align*}
\sup_{x\in B}\sup_{{\bf t}\in \Lambda}\phi\Bigl( l^{n}{\mathbb F}(l,{\bf t}) \bigl\| F({\bf t}+{\bf \tau}+l{\bf u};x)-F({\bf t}+l{\bf u};x) \bigr\|_{L^{p({\bf u})}(\Omega:Y)} \Bigr)
<\epsilon.
\end{align*}
\index{space!$e-W^{[p({\bf u}),\phi,{\mathbb F}]_{2}}_{\Omega,\Lambda',{\mathcal B}}(\Lambda\times X :Y)$}
\item[(ii)] By $W^{[p({\bf u}),\phi,{\mathbb F}]_{2}}_{\Omega,\Lambda',{\mathcal B}}(\Lambda\times X :Y)$ we denote the set consisting of all functions $F : \Lambda \times X \rightarrow Y$ such that, for every $\epsilon>0$ and $B\in {\mathcal B},$ there exists a finite real number
$L>0$ such that for each ${\bf t}_{0}\in \Lambda'$ there exists $\tau \in B({\bf t}_{0},L) \cap \Lambda'$ such that
\begin{align*}
\limsup_{l\rightarrow +\infty}\sup_{x\in B}\sup_{{\bf t}\in \Lambda}\phi\Bigl( l^{n}{\mathbb F}(l,{\bf t})\bigl\| F({\bf t}+{\bf \tau}+l{\bf u};x)-F({\bf t}+l{\bf u};x) \bigr\|_{L^{p({\bf u})}(\Omega:Y)} \Bigr)
<\epsilon.
\end{align*}
\index{space!$W^{[p({\bf u}),\phi,{\mathbb F}]_{2}}_{\Omega,\Lambda',{\mathcal B}}(\Lambda\times X :Y)$}
\end{itemize}
\end{defn}

It is clear that the both concepts are equivalent in the constant coefficient case. 
Further on, the notion introduced in Definition \ref{marinavis}-Definition \ref{marinavis2} generalizes the notion introduced in \cite[Definition 2.1-Definition 2.3]{weyl-varible}, provided that $\Lambda'=\Lambda=I,$ $\Omega=[0,1]$ and $I$ is equal to $[0,\infty)$ or ${\mathbb R},$ whilst the notion introduced in  Definition \ref{marinavist}-Definition \ref{marinavis2t} generalizes the notion introduced in \cite[Definition 2.7-Definition 2.9]{weyl-varible} in the above-mentioned case. Let us also note that, if a function $F : \Lambda \times X \rightarrow Y$ is Stepanov $(\Omega,p({\bf u}))$-$({\mathcal B},\Lambda')$-almost periodic  in the sense of \cite[Definition 2.7]{stmarko-manuel-ap}, then $F\in e-W^{[p({\bf u}),x,{\mathbb F}]}_{\Omega,\Lambda',{\mathcal B}}(\Lambda\times X :Y)$ for any function ${\mathbb F}(\cdot; \cdot)$ satisfying ${\mathbb F}(1,{\bf t})=1$ for all ${\bf t}\in \Lambda.$ If $X=\{0\}$ and ${\mathcal B}=\{X\},$ then we omit the term ``${\mathcal B}$'' from the notation.

We continue by providing two illustrative examples:

\begin{example}\label{prckoI}
Let us recall that J. Stryja has proved, in \cite{jstryja}, that the function $f(t):=\chi_{[0,1/2]}(t),$ $t\in {\mathbb R}$ is equi-Weyl-$p$-almost periodic for any exponent $p\in [1,\infty)$ but it is not Stepanov $p$-almost periodic for any exponent $p\in [1,\infty)$ (see e.g., \cite[Section 3-Section 4]{bjma} for the notion); in \cite[Example 2.12]{weyl-varible}, we have recently extended this result by showing that for each $p\in [1,\infty)$ the function
$f(\cdot)$ belongs to the space $e-W^{(p,x,l^{-\sigma})}_{[0,1],{\mathbb R}}({\mathbb R} :{\mathbb C})$ if and only if $\sigma>0$ (in actual fact, this holds for any $p\in {\mathcal P}({\mathbb R}),$ as easily approved). A similar consideration shows that for each compact set $K\subseteq {\mathbb R}^{n}$ with positive Lebesgue measure and for each $p\in {\mathcal P}({\mathbb R}^{n})$ the function $F(\cdot):=\chi_{K}(\cdot)$ belongs to the space $e-W^{(p({\bf u}),x,l^{-\sigma})}_{[0,1]^{n},{\mathbb R}^{n}}({\mathbb R}^{n} : {\mathbb C})$ if and only if $\sigma>0.$
\end{example}

\begin{example}\label{prckoII}
Let $p\in [1,\infty).$
In \cite{jstryja}, it has been proved that the Heaviside function $f(t):=\chi_{[0,\infty)}(t),$ $t\in {\mathbb R}$ is both Weyl-$p$-normal (i.e., Weyl-$({\mathrm R},{\mathcal B},p)$-normal with $\Lambda=\Lambda'={\mathbb R},$
$X=\{0\},$ ${\mathcal B}=\{X\},$ $Y={\mathbb C}$ and ${\mathrm R}$ being the collection of all sequences in ${\mathbb R};$ see Definition \ref{weyl-normal} below) and Weyl-$p$-almost periodic as well as that $f(\cdot)$ is not equi-Weyl-$p$-almost periodic. In \cite[Example 2.13]{weyl-varible}, we have proved that $f(\cdot)$ belongs to the space
$W^{(p,x,l^{-\sigma})}_{[0,1],{\mathbb R}}({\mathbb R} : {\mathbb C})$ if and only if $\sigma>0$ 
as well as that the function $f(\cdot)$ cannot belong to the space $W^{(p,x,[\psi(l)]^{-1/p})}_{[0,1],{\mathbb R}}({\mathbb R} : {\mathbb C}),$ for any function $\psi : (0,\infty) \rightarrow (0,\infty)$
such that $\limsup_{l\rightarrow +\infty}[\psi(l)]^{-1}>0$ (see also \cite[Example 2.11.15-Example 2.11.17]{nova-mono}). 

Suppose now that $F({\bf t}):=\chi_{[0,\infty)^{n}}({\bf t}),$ ${\bf t} \in {\mathbb R}^{n}$ as well as that $\Lambda :=\Lambda' :={\mathbb R}^{n}$ and $\phi(x)\equiv x.$ Then, for every ${\bf t},\ \tau \in {\mathbb R}^{n}$ and $l>0,$ we have{\small
\begin{align*}
&\int_{{\bf t}+l\Omega}|F(\tau +{\bf u})-F({\bf u})|^{p}\, d{\bf u}
\\&=\int_{({\bf t}+l\Omega) \setminus [0,\infty)^{n}}|F(\tau +{\bf u})|^{p}\, d{\bf u}+\int_{({\bf t}+l\Omega) \cap [0,\infty)^{n}}|F({\bf u})|^{p}\, d{\bf u}
\\& =\int_{\tau + [({\bf t}+l\Omega) \setminus [0,\infty)^{n}]}|F({\bf u})|^{p}\, d{\bf u}+\int_{\tau + [({\bf t}+l\Omega) \cap [0,\infty)^{n}]}|F({\bf u})-1|^{p}\, d{\bf u}
\\& \leq m\Bigl( \bigl(\tau + [({\bf t}+l\Omega) \setminus [0,\infty)^{n}]\bigr) \cap [0,\infty)^{n} \Bigr)+m\Bigl( \bigl(\tau + [({\bf t}+l\Omega) \cap [0,\infty)^{n}]\bigr) \setminus  [0,\infty)^{n} \Bigr).
\end{align*}}
If $l>|\tau|,$ then it is not difficult to prove that the later does not exceed $2^{n}l^{n-1}|\tau|,$ which implies that $F\in W^{(p,x,l^{-\sigma})}_{[0,1]^{n},{\mathbb R}^{n}}({\mathbb R}^{n} : {\mathbb C})$ if $\sigma>(n-1)/p;$ this is also the best constant for $\sigma$ we can obtain here. On the other hand, there is no $\sigma>0$ such that $F\in e-W^{(p,x,l^{-\sigma})}_{[0,1]^{n},{\mathbb R}^{n}}({\mathbb R}^{n} : {\mathbb C}).$
\end{example}

Denote by ${\mathrm A}_{X,Y}$ any of the above introduced classes of function spaces.
Then we have the following:

\begin{itemize}
\item[(i)] Suppose that $c\in {\mathbb C}$ and $F(\cdot ; \cdot)$ belongs to ${\mathrm A}_{X,Y}.$
Then $cF(\cdot ; \cdot)$ belongs to ${\mathrm A}_{X,Y},$ provided that there exists a function $\varphi : [0,\infty) \rightarrow [0,\infty)$ satisfying that $\phi(xy)\leq \varphi(y)\phi(x),$ $x,\ y \geq 0.$
\item[(ii)] Suppose that $F\in {\mathrm A}_{X,Y},$ 
$A\in L(Y,Z),$
$\phi(\cdot)$ is monotonically increasing function and there exists a function $\varphi : [0,\infty) \rightarrow [0,\infty)$ satisfying that $\phi(xy)\leq \varphi(y)\phi(x),$ $x,\ y \geq 0.$ Using Lemma \ref{aux}(iii), Lemma \ref{aux123} and a simple argumentation, it follows that $AF\in {\mathrm A}_{X,Y}.$ 
\item[(iii)]
\begin{itemize}
\item[(a)]
Suppose that $c_{2}\in {\mathbb C}\setminus \{0\}$ and $F(\cdot ; \cdot)$ belongs to ${\mathrm A}_{X,Y}.$
Then $F(\cdot ; c_{2}\cdot)$ and $F(\cdot ; \cdot)$ belong to ${\mathrm A}_{X,Y}$, where ${\mathcal B}_{c_{2}}\equiv \{c_{2}^{-1}B : B\in {\mathcal B}\}.$
\item[(b)] Suppose that $c_{1}\in {\mathbb C}\setminus \{0\},$ $c_{2}\in {\mathbb C}\setminus \{0\},$ and $F(\cdot ; \cdot)$ belongs to ${\mathrm A}_{X,Y}.$ Define the function $F_{c_{1},c_{2}}: \Lambda/c_{1} \times X \rightarrow Y$ by
$F_{c_{1},c_{2}}({\bf t},x):=F(c_{1}{\bf t} ; c_{2}x),$ ${\bf t}\in \Lambda/c_{1},$ $x\in X.$ If we assume that 
$\phi(\cdot)$ is a monotonically increasing function and there exists a function $\varphi : [0,\infty) \rightarrow [0,\infty)$ satisfying that $\phi(xy)\leq \varphi(y)\phi(x),$ $x,\ y \geq 0,$ then $F\in (e-)W^{(p({\bf u}),\phi,{\mathbb F})}_{\Omega,\Lambda',{\mathcal B}}(\Lambda\times X :Y)$ [$F\in (e-)W^{[p({\bf u}),\phi,{\mathbb F}]}_{\Omega,\Lambda',{\mathcal B}}(\Lambda\times X :Y)$] implies
$F_{c_{1},c_{2}}\in (e-)W^{(p_{c_{1}}({\bf u}),\phi,{\mathbb F}_{c_{1}})}_{\Omega/c_{1},\Lambda'/c_{1},{\mathcal B}_{c_{2}}}((\Lambda /c_{1})\times X :Y)$
[$F_{c_{1},c_{2}}\in (e-)W^{[p_{c_{1}}({\bf u}),\phi,{\mathbb F}_{c_{1}}]}_{\Omega/c_{1},\Lambda'/c_{1},{\mathcal B}_{c_{2}}}((\Lambda /c_{1})\times X :Y)$], where
$p_{c_{1}}({\bf u}):=p(c_{1}{\bf u}),$ ${\bf u}\in \Lambda/c_{1}$ and ${\mathbb F}_{c_{1}}(x,{\bf t}):={\mathbb F}(x,c_{1}{\bf t}),$
$x\geq 0,$
${\bf t} \in \Lambda/c_{1}.$
For the class $e-W^{(p({\bf u}),\phi,{\mathbb F})}_{\Omega,\Lambda',{\mathcal B}}(\Lambda\times X :Y)$, this follows from the inequality
\begin{align*}
\Bigl[\phi &\Bigl( \bigl\| F_{c_{1},c_{2}}({\bf \tau}+{\bf u};x)-F_{c_{1},c_{2}}({\bf u};x) \bigr\|\Bigr)\Bigr]_{L^{p_{c_{1}}({\bf u})}({\bf t}/c_{1}+l\Omega/c_{1}:Y)} 
\\&\leq \Bigl(1+|c_{1}|^{-n} \Bigr)
\Bigl[\phi\Bigl( \bigl\| F({\bf \tau}+{\bf u};x)-F({\bf u};x) \bigr\|\Bigr)\Bigr]_{L^{p({\bf u})}({\bf t}+l\Omega:Y)} ,\quad {\bf t}\in \Lambda,
\end{align*}
which follows from a trivial computation involving the chain rule, the elementary definitions and the inequality $\varphi_{p({\bf u})}(c\cdot)\leq |c|\varphi_{p({\bf u})}(\cdot)$ for $|c|\leq 1.$ Similarly, if we assume that there exists a function $\varphi : [0,\infty) \rightarrow [0,\infty)$ satisfying that $\phi(xy)\leq \varphi(y)\phi(x),$ $x,\ y \geq 0$ and $F\in (e-)W^{(p({\bf u}),\phi,{\mathbb F})_{i}}_{\Omega,\Lambda',{\mathcal B}}(\Lambda\times X :Y)$ [$F\in (e-)W^{[p({\bf u}),\phi,{\mathbb F}]_{i}}_{\Omega,\Lambda',{\mathcal B}}(\Lambda\times X :Y)$] for $i=1$ or $i=2$, then
$F_{c_{1},c_{2}}\in (e-)W^{(p_{c_{1}}({\bf u}),\phi,{\mathbb F}_{c_{1}})_{i}}_{\Omega/c_{1},\Lambda'/c_{1},{\mathcal B}_{c_{2}}}((\Lambda /c_{1})\times X :Y)$
[$F_{c_{1},c_{2}}\in (e-)W^{[p_{c_{1}}({\bf u}),\phi,{\mathbb F}_{c_{1}}]_{i}}_{\Omega/c_{1},\Lambda'/c_{1},{\mathcal B}_{c_{2}}}((\Lambda /c_{1})\times X :Y)$].
\item[(iv)] The use of  Jensen integral inequality in general measure spaces \cite[Theorem 1.1]{sever-sever} may be useful to state some inclusions about the introduced classes of functions. The consideration is similar to that established in the one-dimensional case \cite{weyl-varible} and therefore omitted.
\end{itemize}
\end{itemize}

Regarding the convolution invariance of spaces $(e-)W^{(p({\bf u}),\phi,{\mathbb F})}_{\Omega,\Lambda',{\mathcal B}}({\mathbb R}^{n}\times X :Y)$
and\\ $(e-)W^{[p({\bf u}),\phi,{\mathbb F}]}_{\Omega,\Lambda',{\mathcal B}}({\mathbb R}^{n}\times X :Y),$ we will state the following results (the corresponding proofs are very similar to the proof of \cite[Proposition 4.12]{weyl-varible}, given in the one-dimensional case, and we will only present the main details of proof for Theorem \ref{shokiran}; the results on invariance of various kinds of (equi-)Weyl almost periodicity under the actions of convolution products, established in \cite[Section 4.1]{weyl-varible}, are not simply applied in the multi-dimensional setting and we will not reconsider these results here):

\begin{thm}\label{shokiran}
Suppose that 
$\varphi :[0,\infty) \rightarrow [0,\infty) ,$
$\phi :[0,\infty) \rightarrow [0,\infty) $ is a convex monotonically increasing function satisfying $\phi (xy)\leq \varphi(x)\phi(y)$ for all $x, \ y\geq 0,$ 
$h\in L^{1}({\mathbb R}^{n}),$ $\Omega=[0,1]^{n} $,\\ $F\in (e-)W^{(p({\bf u}),\phi,{\mathbb F})}_{\Omega,\Lambda',{\mathcal B}}({\mathbb R}^{n}\times X :Y),$ $1/p({\bf u})+1/q({\bf u})=1,$ and for each $x\in X$ we have\\ $\sup_{{\bf t}\in {\mathbb R}^{n}}\| F({\bf t};x)\|_{Y}<\infty.$ If ${\mathbb F}_{1} : (0,\infty) \times {\mathbb R}^{n} \rightarrow (0,\infty),$ $p_{1}\in {\mathcal P}({\mathbb R^{n}})$ and if, for every ${\bf t}\in {\mathbb R}^{n}$ and $l>0,$ there exists a sequence $(a_{k})_{k\in  l{\mathbb Z}^{d}}$
of positive real numbers such that $\sum_{k\in  l{\mathbb Z}^{n}}a_{k}=1$ and {\small
\begin{align}\label{razaq}
\int_{{\bf t}+l\Omega}\varphi_{p_{1}({\bf u})}\Biggl( 2\sum_{k\in l{\mathbb Z}^{n}}a_{k}l^{-n}\Bigl[\varphi\bigl( a_{k}^{-1}l^{n}h({\bf u}-{\bf v}) \bigr)\Bigr]_{L^{q({\bf v})}({\bf u}-k+l\Omega)}{\mathbb F}_{1}(l,{\bf t})
\bigl[{\mathbb F}(l,{\bf u}-k)\bigr]^{-1}
\Biggr)\, d{\bf u} \leq 1,
\end{align}}
then $h\ast F\in (e-)W^{(p_{1}({\bf u}),\phi,{\mathbb F}_{1})}_{\Omega,\Lambda',{\mathcal B}}({\mathbb R}^{n}\times X :Y).$
\end{thm}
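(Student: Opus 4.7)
The plan is to verify the defining Weyl estimate for $h \ast F$ directly from the one assumed for $F$. Fix $\varepsilon > 0$ and $B \in {\mathcal B}$, and invoke the hypothesis $F \in (e-)W^{(p({\bf u}),\phi,{\mathbb F})}_{\Omega,\Lambda',{\mathcal B}}({\mathbb R}^{n}\times X :Y)$ to select $l > 0$, $L > 0$ and, for each ${\bf t}_{0} \in \Lambda'$, a translation $\tau \in B({\bf t}_{0}, L) \cap \Lambda'$ for which \eqref{whatusup} holds. Writing $\Delta_\tau G({\bf u}; x) := G(\tau + {\bf u}; x) - G({\bf u}; x)$ and using that $\Omega = [0,1]^n$ so that the cubes $\{k + l\Omega : k \in l{\mathbb Z}^{n}\}$ tile ${\mathbb R}^{n}$, the convolution difference decomposes as
\begin{align*}
\Delta_\tau (h \ast F)({\bf u}; x) = \sum_{k \in l{\mathbb Z}^{n}} \int_{k+l\Omega} h({\bf u}-{\bf v})\, \Delta_\tau F({\bf v}; x)\, d{\bf v}.
\end{align*}
The pointwise boundedness $\sup_{{\bf t}\in {\mathbb R}^{n}}\|F({\bf t};x)\|_Y < \infty$ together with $h \in L^{1}({\mathbb R}^{n})$ ensures that the series converges absolutely and justifies the termwise estimates below.

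Next I apply Jensen's inequality twice, exploiting the convexity and monotonicity of $\phi$: first with the discrete probability weights $(a_{k})$ satisfying $\sum_{k} a_{k} = 1$, and then within each cube with the normalized Lebesgue measure $l^{-n}\chi_{k+l\Omega}({\bf v})\, d{\bf v}$. The submultiplicative bound $\phi(xy) \leq \varphi(x)\phi(y)$ lets me separate the kernel from $\phi(\|\Delta_\tau F\|_Y)$, yielding
\begin{align*}
\phi\bigl(\|\Delta_\tau(h \ast F)({\bf u};x)\|_Y\bigr) \leq \sum_{k \in l{\mathbb Z}^{n}} a_{k} l^{-n} \int_{k+l\Omega} \varphi\bigl(a_{k}^{-1} l^{n} |h({\bf u}-{\bf v})|\bigr)\, \phi\bigl(\|\Delta_\tau F({\bf v}; x)\|_Y\bigr)\, d{\bf v}.
\end{align*}
On each inner integral I apply the variable-exponent H\"older inequality (Lemma \ref{aux}(i)) with the conjugate pair $p({\bf v}), q({\bf v})$, and then invoke the Weyl-almost-periodicity of $F$ to bound the $L^{p({\bf v})}$ norm of $\phi(\|\Delta_\tau F(\cdot; x)\|_Y)$ on the appropriate cube by $\varepsilon / {\mathbb F}(l, \cdot)$. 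Tracking the change of variables ${\bf v} \mapsto {\bf u} - {\bf v}$ is what produces the $L^{q({\bf v})}$ norm of $\varphi(a_{k}^{-1} l^{n} h(\cdot))$ over the shifted region ${\bf u} - k + l\Omega$ together with the factor $[{\mathbb F}(l, {\bf u}-k)]^{-1}$ appearing in hypothesis \eqref{razaq}.

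Multiplying by ${\mathbb F}_{1}(l, {\bf t})$ and taking the $L^{p_{1}({\bf u})}({\bf t}+l\Omega)$ Luxemburg norm of the resulting pointwise inequality, the associated modular integrand---after pulling out the factor $2\varepsilon$---is precisely the expression whose integral is at most $1$ by \eqref{razaq}. The Luxemburg norm characterization then gives
\begin{align*}
\bigl\|{\mathbb F}_{1}(l,{\bf t})\, \phi\bigl(\|\Delta_\tau (h \ast F)({\bf u}; x)\|_Y\bigr)\bigr\|_{L^{p_{1}({\bf u})}({\bf t} + l\Omega)} \leq 2\varepsilon,
\end{align*}
uniformly in ${\bf t} \in {\mathbb R}^{n}$ and $x \in B$, which is the defining estimate for $h\ast F \in (e-)W^{(p_{1}({\bf u}),\phi,{\mathbb F}_{1})}_{\Omega,\Lambda',{\mathcal B}}({\mathbb R}^{n} \times X : Y)$. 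The equi-Weyl case is handled with a single fixed $l$; the Weyl variant requires only the additional $\limsup_{l \to +\infty}$, which poses no new difficulty since the estimates are uniform in ${\bf t}$ and $x$.

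The main obstacle is the bookkeeping through the Jensen--H\"older chain: ensuring that the variable exponents behave consistently under the translation ${\bf v} \mapsto {\bf u} - {\bf v}$, that the weight ${\mathbb F}(l, {\bf u}-k)^{-1}$ coming from the Weyl-almost-periodicity of $F$ emerges in exactly the right position, and that the assembled pointwise bound matches the precise modular form prescribed by \eqref{razaq}. These are essentially the same calculations as in the one-dimensional \cite[Proposition 4.12]{weyl-varible}, carried out cube by cube on the lattice $l{\mathbb Z}^{n}$.
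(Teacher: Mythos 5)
Your proposal is correct and follows essentially the same route as the paper's proof: the same lattice decomposition over cubes of side $l$, the double application of Jensen's inequality (discrete with the weights $a_{k}$, then continuous with the normalized measure on each cube), the submultiplicativity $\phi(xy)\leq \varphi(x)\phi(y)$, the variable-exponent H\"older inequality, and finally the modular condition \eqref{razaq} to close the Luxemburg-norm estimate. No substantive differences to report.
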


\begin{proof}
Since $\sup_{{\bf t}\in {\mathbb R}^{n}}\| F({\bf t};x)\|_{Y}<\infty,$ $x\in X$, it is clear that the value $(h\ast F)({\bf t};x)$ is well defined for all ${\bf t} \in {\mathbb R}^{n}$ and $x\in X.$ Furthermore, since we have assumed that the function $\phi(\cdot)$ is monotonically increasing, we have (${\bf t}\in {\mathbb R}^{n},$ $l>0;$ $x\in X$ fixed): 
\begin{align*}
\phi &\Bigl( \bigl\| (h\ast F)({\bf \tau}+{\bf u};x)-(h\ast F)({\bf u};x) \bigr\|_{Y}\Bigr)_{L^{p_{1}({\bf u})}({\bf t}+l\Omega)}
\\& =\phi \Biggl( \Bigl\| \int_{{\mathbb R}^{n}}h({\bf s}) \Bigl[F({\bf \tau}+{\bf u}-{\bf s};x)-F({\bf u}-{\bf s};x) \Bigr] d{\bf s} \Bigr\|_{Y}\Biggr)_{L^{p_{1}({\bf u})}({\bf t}+l\Omega)}
\\& \leq \phi \Biggl( \int_{{\mathbb R}^{n}}|h({\bf s})| \cdot \Bigl\|F({\bf \tau}+{\bf u}-{\bf s};x)-F({\bf u}-{\bf s};x) \Bigr\|_{Y} d{\bf s}\Biggr)_{L^{p_{1}({\bf u})}({\bf t}+l\Omega)}
\\& =\inf\Biggl\{  \lambda>0: \int_{{\bf t}+l\Omega}\varphi_{p_{1}({\bf u})}\Biggl( \frac{\phi \bigl( \int_{{\mathbb R}^{n}}|h({\bf s})| \cdot \bigl\|F({\bf \tau}+{\bf u}-{\bf s};x)-F({\bf u}-{\bf s};x) \bigr\|_{Y} d{\bf s} \bigr)}{\lambda}\Biggr)\, d{\bf u}\leq 1  \Biggr\}.
\end{align*}
But, since we have assumed that 
$\phi(\cdot)$ is convex and $\sum_{k\in {\mathbb N}_{0}^{n}}a_{k}=1,$ we have
\begin{align}\label{infinitever}
\phi \Biggl( \sum_{k\in l{\mathbb N}_{0}^{n}}a_{k}x_{k} \Biggr) \leq \sum_{k\in l{\mathbb N}_{0}^{n}}a_{k}\phi \bigl(x_{k}\bigr),
\end{align}
for any sequence $(x_{k})$ of non-negative real numbers. Using \eqref{infinitever}, the fact that the function $\varphi_{p_{1}({\bf u})}(\cdot)$ is monotonically increasing, the above computation, as well as the Jensen integral inequality and the H\"older inequality (see Lemma \ref{aux}(i)), we get:
\begin{align*}
&\int_{{\bf t}+l\Omega}\varphi_{p_{1}({\bf u})}\Biggl( \frac{\phi \bigl( \int_{{\mathbb R}^{n}}|h({\bf s})| \cdot \bigl\|F({\bf \tau}+{\bf u}-{\bf s};x)-F({\bf u}-{\bf s};x) \bigr\|_{Y} d{\bf s} \bigr)}{\lambda}\Biggr)\, d{\bf u}
\\& \leq \int_{{\bf t}+l\Omega}\varphi_{p_{1}({\bf u})}\Biggl( \frac{\sum_{k\in l{\mathbb Z}^{n}}a_{k}\phi \bigl( \int_{k-l\Omega}a_{k}^{-1}|h({\bf s})| \cdot \bigl\|F({\bf \tau}+{\bf u}-{\bf s};x)-F({\bf u}-{\bf s};x) \bigr\|_{Y} d{\bf s} \bigr)}{\lambda}\Biggr)   
\\& \leq \int_{{\bf t}+l\Omega}\varphi_{p_{1}({\bf u})}\Biggl( \frac{\sum_{k\in l{\mathbb Z}^{n}}a_{k}l^{-n}\int_{k-l\Omega}\phi \bigl( a_{k}^{-1}l^{n}|h({\bf s})| \cdot \bigl\|F({\bf \tau}+{\bf u}-{\bf s};x)-F({\bf u}-{\bf s};x) \bigr\|_{Y} d{\bf s} \bigr)}{\lambda}\Biggr)   
\\& =\int_{{\bf t}+l\Omega}\varphi_{p_{1}({\bf u})}\Biggl( \frac{\sum_{k\in l{\mathbb Z}^{n}}a_{k}l^{-n}\int_{k-l\Omega}\phi \bigl( a_{k}^{-1}l^{n}|h({\bf u}-{\bf v})| \cdot \bigl\|F({\bf \tau}+{\bf v};x)-F({\bf v};x) \bigr\|_{Y} \bigr) d{\bf v}}{\lambda}\Biggr)   
\\& \leq \int_{{\bf t}+l\Omega}\varphi_{p_{1}({\bf u})}\Biggl( \frac{\sum_{k\in l{\mathbb Z}^{n}}a_{k}l^{-n}\int_{{\bf u}-k+l\Omega}\varphi \bigl( a_{k}^{-1}l^{n}|h({\bf u}-{\bf v})| \bigr) \phi\bigl( \bigl\|F({\bf \tau}+{\bf v};x)-F({\bf v};x) \bigr\|_{Y} \bigr) d{\bf v} }{\lambda}\Biggr) \, d{\bf u}
\\& \leq   \int_{{\bf t}+l\Omega}\varphi_{p_{1}({\bf u})}\Biggl( \frac{\sum_{k\in l{\mathbb Z}^{n}}2a_{k}l^{-n}\Bigl[\varphi\bigl( a_{k}^{-1}l^{n}h({\bf u}-{\bf v}) \bigr)\Bigr]_{L^{q({\bf v})}({\bf u}-k+l\Omega)}}{\lambda}
\\ &\times \bigl[\phi\bigl( \bigl\|F({\bf \tau}+{\bf v};x)-F({\bf v};x) \bigr\|_{Y} \bigr)\bigr]_{L^{p({\bf v})}({\bf u}-k+l\Omega)} \Biggr) \, d{\bf u}
\\& \leq   \int_{{\bf t}+l\Omega}\varphi_{p_{1}({\bf u})}\Biggl( \frac{\sum_{k\in l{\mathbb Z}^{n}}2a_{k}l^{-n}\Bigl[\varphi\bigl( a_{k}^{-1}l^{n}h({\bf u}-{\bf v}) \bigr)\Bigr]_{L^{q({\bf v})}({\bf u}-k+l\Omega)}}{\lambda \cdot {\mathbb F}(l,{\bf u}-k)}\Biggr)\, d{\bf u}.
\end{align*}
The use of \eqref{razaq} simply completes the proof.
\end{proof}

\begin{thm}\label{shokiran1}
Suppose that 
$\varphi :[0,\infty) \rightarrow [0,\infty) ,$
$\phi :[0,\infty) \rightarrow [0,\infty) $ is a convex monotonically increasing function satisfying $\phi (xy)\leq \varphi(x)\phi(y)$ for all $x, \ y\geq 0,$ 
$h\in L^{1}({\mathbb R}^{n}),$ $\Omega=[0,1]^{n} $,\\ $F\in (e-)W^{[p({\bf u}),\phi,{\mathbb F}]}_{\Omega,\Lambda',{\mathcal B}}({\mathbb R}^{n}\times X :Y),$ $1/p({\bf u})+1/q({\bf u})=1,$ and for each $x\in X$ we have\\ $\sup_{{\bf t}\in {\mathbb R}^{n}}\| F({\bf t};x)\|_{Y}<\infty.$ If ${\mathbb F}_{1} : (0,\infty) \times {\mathbb R}^{n} \rightarrow (0,\infty),$ $p_{1}\in {\mathcal P}({\mathbb R^{n}})$ and if, for every ${\bf t}\in {\mathbb R}^{n}$ and $l>0,$  there exists a sequence $(a_{k})_{k\in  l{\mathbb Z}^{d}}$
of positive real numbers such that $\sum_{k\in  l{\mathbb Z}^{n}}a_{k}=1$ and {\small
\begin{align*}
\int_{\Omega}\varphi_{p_{1}({\bf u})}\Biggl( 2\sum_{k\in l{\mathbb Z}^{n}}a_{k}l^{-n}\Bigl[\varphi\bigl( a_{k}^{-1}l^{n}h(k-l{\bf v}) \bigr)\Bigr]_{L^{q({\bf v})}(\Omega)}{\mathbb F}_{1}(l,{\bf t})
\bigl[{\mathbb F}(l,{\bf t}+l{\bf u}-k)\bigr]^{-1}
\Biggr)\, d{\bf u} \leq 1,
\end{align*}}
then $h\ast F\in (e-)W^{[p_{1}({\bf u}),\phi,{\mathbb F}_{1}]}_{\Omega,\Lambda',{\mathcal B}}({\mathbb R}^{n}\times X :Y).$
\end{thm}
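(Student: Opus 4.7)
The plan is to follow the template of Theorem \ref{shokiran}'s proof, modified so that the change of variables in the convolution matches the bracket form of the Weyl norm (with the $L^{p({\bf u})}(\Omega)$-norm, the argument shift ${\bf t}+l{\bf u}$, and the prefactor $l^{n}$). First I would fix $\epsilon>0$ and $B\in {\mathcal B}$, and invoke $F\in (e-)W^{[p({\bf u}),\phi,{\mathbb F}]}_{\Omega,\Lambda',{\mathcal B}}({\mathbb R}^{n}\times X:Y)$ to extract $l,L>0$ such that, for every ${\bf t}_{0}\in \Lambda'$, there exists $\tau\in B({\bf t}_{0},L)\cap \Lambda'$ with
\[
l^{n}{\mathbb F}(l,{\bf t}')\bigl\|\phi\bigl(\|F({\bf t}'+\tau+l{\bf v};x)-F({\bf t}'+l{\bf v};x)\|_{Y}\bigr)\bigr\|_{L^{p({\bf v})}(\Omega)}<\epsilon
\]
uniformly for $x\in B$ and ${\bf t}'\in {\mathbb R}^{n}$. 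The pointwise sup-boundedness of $F(\cdot;x)$ guarantees that $(h*F)(\cdot;x)$ is well defined, and the same $\tau$ will be used as the candidate translation for $h*F$.

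For fixed ${\bf t}\in{\mathbb R}^{n}$ and $x\in B$, I would start from the identity
\[
(h*F)({\bf t}+\tau+l{\bf u};x)-(h*F)({\bf t}+l{\bf u};x)=\int_{{\mathbb R}^{n}}h({\bf s})\bigl[F({\bf t}+\tau+l{\bf u}-{\bf s};x)-F({\bf t}+l{\bf u}-{\bf s};x)\bigr]\,d{\bf s},
\]
pass $\|\cdot\|_{Y}$ and the monotonic $\phi$ inside, and decompose ${\mathbb R}^{n}=\bigsqcup_{k\in l{\mathbb Z}^{n}}(k-l\Omega)$. Jensen's summation inequality (using $\sum_{k}a_{k}=1$) together with Jensen's integral inequality on the unit-measure cube $\Omega$, applied after the substitution ${\bf s}=k-l{\bf v}$ (Jacobian $l^{n}$) on each cube $k-l\Omega$, will push $\phi$ inside the sum and then inside each integral. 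This substitution is chosen precisely so that $h({\bf s})=h(k-l{\bf v})$ matches the form occurring in the hypothesis, and so that $F({\bf t}+\tau+l{\bf u}-{\bf s};x)$ becomes $F({\bf t}'+\tau+l{\bf v};x)$ with ${\bf t}':={\bf t}+l{\bf u}-k$, placing the $F$-difference exactly into the bracket Weyl form.

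Next, submultiplicativity $\phi(xy)\leq \varphi(x)\phi(y)$ and the H\"older inequality (Lemma \ref{aux}(i)) will split each inner integral as a product of $\bigl[\varphi(a_{k}^{-1}l^{n}h(k-l{\bf v}))\bigr]_{L^{q({\bf v})}(\Omega)}$ and $\bigl[\phi(\|F({\bf t}'+\tau+l{\bf v};x)-F({\bf t}'+l{\bf v};x)\|_{Y})\bigr]_{L^{p({\bf v})}(\Omega)}$, the second factor being controlled by $\epsilon/(l^{n}{\mathbb F}(l,{\bf t}+l{\bf u}-k))$ from the $F$-hypothesis. Assembling the chain of inequalities will yield a pointwise-in-${\bf u}$ estimate whose image under $\varphi_{p_{1}({\bf u})}$, integrated over $\Omega$, is dominated by the quantity appearing in the hypothesis of the theorem; monotonicity of $\varphi_{p_{1}({\bf u})}$ and the Luxemburg-norm definition will then deliver the estimate defining membership in $(e-)W^{[p_{1}({\bf u}),\phi,{\mathbb F}_{1}]}_{\Omega,\Lambda',{\mathcal B}}({\mathbb R}^{n}\times X:Y)$. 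The main obstacle I anticipate is bookkeeping the multiple $l^{n}$-factors---one from the Jacobian of the substitution, one from Jensen's integral inequality on the unit cube, and the leading one in the bracket Weyl definition---so that they cancel correctly against the $l^{-n}$ occurring inside $\varphi_{p_{1}({\bf u})}$ in the hypothesis; for the (non-equi) Weyl variant, one additionally applies $\limsup_{l\to+\infty}$ to the resulting estimate, with $L$ chosen independently of $l$ exactly as in the round-bracket case.
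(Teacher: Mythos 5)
Your plan is the one the paper intends: the paper gives no proof of Theorem \ref{shokiran1} at all, only the remark that it is ``very similar'' to the proof of Theorem \ref{shokiran}, and your adaptation (tiling ${\mathbb R}^{n}$ by the cubes $k-l\Omega$, Jensen for the sum and for the integral, the substitution ${\bf s}=k-l{\bf v}$, submultiplicativity of $\phi$, H\"older on $\Omega$, then the Luxemburg-norm test function) is exactly the right translation of that argument to the bracket classes.

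The one point you defer --- the $l^{n}$ bookkeeping --- is, however, precisely where the argument does not close against the hypothesis as literally displayed, so you should carry it out rather than assert that the factors ``cancel correctly.'' Tracking the powers: the Jacobian $l^{n}$ of ${\bf s}=k-l{\bf v}$ cancels the $l^{-n}$ coming from Jensen's inequality on the cube of measure $l^{n}$, so after H\"older you obtain
$\phi(x_{k})\leq 2\bigl[\varphi(a_{k}^{-1}l^{n}h(k-l{\bf v}))\bigr]_{L^{q({\bf v})}(\Omega)}\bigl[\phi(\|F({\bf t}'+\tau+l{\bf v};x)-F({\bf t}'+l{\bf v};x)\|_{Y})\bigr]_{L^{p({\bf v})}(\Omega)}$ with no residual power of $l$. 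The bracket hypothesis on $F$ bounds the second factor by $\epsilon\, l^{-n}[{\mathbb F}(l,{\bf t}+l{\bf u}-k)]^{-1}$, and the membership criterion for $h\ast F$ requires testing the Luxemburg norm at $\lambda_{0}=\epsilon\, l^{-n}[{\mathbb F}_{1}(l,{\bf t})]^{-1}$; these two factors of $l^{-n}$ and $l^{n}$ annihilate each other, leaving
$\phi(\cdot)/\lambda_{0}\leq 2\sum_{k}a_{k}\bigl[\varphi(a_{k}^{-1}l^{n}h(k-l{\bf v}))\bigr]_{L^{q({\bf v})}(\Omega)}{\mathbb F}_{1}(l,{\bf t})[{\mathbb F}(l,{\bf t}+l{\bf u}-k)]^{-1}$,
i.e.\ the displayed integral condition \emph{without} the factor $l^{-n}$ multiplying $a_{k}$. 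The condition as printed in the theorem (with $a_{k}l^{-n}$) appears to be carried over verbatim from \eqref{razaq}, where the $l^{-n}$ genuinely survives because the substitution ${\bf v}={\bf u}-{\bf s}$ there has Jacobian $1$; it does not match what your chain of inequalities produces, and since $\varphi_{p_{1}({\bf u})}$ is increasing the stated condition is strictly weaker than the one you need whenever $l>1$. So either derive and state the corrected sufficient condition (drop the $l^{-n}$, or equivalently replace ${\mathbb F}_{1}$ by $l^{n}{\mathbb F}_{1}$ in the integrand), or flag the discrepancy explicitly; as written, the final step of your proof cannot invoke the displayed inequality.
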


The interested reader may try to formulate the corresponding statements about the convolution invariance of Weyl almost periodicity for the remaining four classes of functions introduced following our considerations from \cite[Section 4]{weyl-varible}.

Concerning the functions $\phi(\cdot)$ and ${\mathbb F}(\cdot,\cdot),$ the most important case is that one in which $\phi(x)\equiv x,$ ${\mathbb F}(l,{\bf t})\equiv m(l\Omega)^{-1}\|1\|_{L^{q({\bf u})}(l\Omega)},$ where $1/p({\bf u})+1/q({\bf u})=1,$ when we obtain the usual concept of (equi-)Weyl-$p({\bf u})$-almost periodicity; if this is the case, the spaces $(e-)W^{(p({\bf u}),\phi,{\mathbb F})}_{\Omega,\Lambda',{\mathcal B}}$, $(e-)W^{(p({\bf u}),\phi,{\mathbb F})_{1}}_{\Omega,\Lambda',{\mathcal B}}$ and $(e-)W^{(p({\bf u}),\phi,{\mathbb F})_{2}}_{\Omega,\Lambda',{\mathcal B}},$ resp. the spaces $(e-)W^{[p({\bf u}),\phi,{\mathbb F}]}_{\Omega,\Lambda',{\mathcal B}}$, $(e-)W^{[p({\bf u}),\phi,{\mathbb F}]_{1}}_{\Omega,\Lambda',{\mathcal B}}$ and $(e-)W^{[p({\bf u}),\phi,{\mathbb F}]_{2}}_{\Omega,\Lambda',{\mathcal B}},$ coincide. Furthermore, the use of H\"older inequality enables one to see these spaces are contained in the corresponding spaces of functions with $p({\bf u})\equiv 1.$   

\section{The constant coefficient case}\label{klasika}

In this section, we will always assume that $\Omega=[0,1]^{n},$ 
$\Lambda$ is
a general non-empty subset of
${\mathbb R}^{n}$ satisfying $\Lambda' +\Lambda+ l\Omega\subseteq \Lambda$ and $\Lambda +l\Omega \subseteq \Lambda $ 
for all $l>0,$
$\phi(x)\equiv x$ and
$p({\bf t})\equiv p\in [1,\infty),$ when the usual concept of (equi-)Weyl-$p$-almost periodicity is obtained by plugging ${\mathbb F}(l,{\bf t})\equiv l^{-n/p}.$ The corresponding class of function is denoted by $(e-)W_{ap,\Lambda',{\mathcal B}}^{p}(\Lambda \times X : Y).$\index{space!$(e-)W_{ap,\Lambda',{\mathcal B}}^{p}(\Lambda \times X : Y)$}

Before we switch to Subsection \ref{dbdb}, we would like to present the following illustrative example:

\begin{example}\label{alan} (see also \cite[Example 2.15(i)]{marko-manuel-ap})
Suppose that 
the complex-valued mapping $t\mapsto g_{j}(s)\, ds,$ $t\in {\mathbb R}$ is essentially bounded and 
(equi-)Weyl-$p$-almost periodic
($1\leq j \leq n$). Define
\begin{align*}
F\bigl(t_{1},\cdot \cdot \cdot,t_{2n}\bigr):=\prod_{j=1}^{n}\Bigl[g_{j}\bigl(t_{j+n}\bigr)-g_{j}\bigl(t_{j}\bigr)\Bigr],\ \mbox{ where } t_{j}\in {\mathbb R} \mbox{  for }\ 1\leq j\leq 2n,
\end{align*}
and $\Lambda':=\{({\bf \tau},{\bf \tau}) : {\bf \tau} \in {\mathbb R^{n}} \}.$ 
Then the argumentation from \cite[Example 2.13(ii)]{marko-manuel-ap} shows that 
there exists a finite constant $M>0$ such that
\begin{align*}
\Bigl\| & F\bigl(t_{1}+\tau_{1},\cdot \cdot \cdot,t_{2n}+\tau_{2n}\bigr)-F\bigl(t_{1},\cdot \cdot \cdot,t_{2n}\bigr)\Bigr\|_{Y}
\\& \leq M\Biggl\{ \Bigl|g_{1}\bigl( t_{n+1}+\tau_{1}  \bigr)-g_{1}\bigl( t_{n+1} \bigr)\Bigr|+\Bigl|g_{1}\bigl( t_{1}+\tau_{1}  \bigr)-g_{1}\bigl( t_{1} \bigr)\Bigr| +\cdot \cdot \cdot 
\\&+\Bigl|g_{n}\bigl( t_{2n}+\tau_{n}  \bigr)-g_{n}\bigl( t_{2n} \bigr)\Bigr|+\Bigl|g_{n}\bigl( t_{n}+\tau_{n}  \bigr)-g_{n}\bigl( t_{n} \bigr)\Bigr|\Biggr\},
\end{align*}
for any $(t_{1},\cdot \cdot \cdot,t_{2n})\in {\mathbb R}^{2n}$ and $(\tau_{1},\cdot \cdot \cdot,\tau_{2n})\in \Lambda'.$
Using the corresponding definitions, the Fubini theorem and an elementary argumentation, it follows that the function $F(\cdot)$ 
belongs to the class $(e-)W_{ap,\Lambda'}^{p}({\mathbb R}^{2n} : Y).$ Furthermore, in the case of consideration of equi-Weyl-$p$-almost periodicity, when 
any direct product of finite number of equi-Weyl-$p$-almost periodic functions is again equi-Weyl-$p$-almost periodic, we can show that the function $F(\cdot)$ 
belongs to the class $e-W_{ap,\Lambda''}^{p}({\mathbb R}^{2n} : Y),$ where
$\Lambda'':=\{(a,a,\cdot \cdot \cdot, a) \in {\mathbb R}^{2n} : a\in {\mathbb R}\}.
$ 
\end{example}

\subsection{Weyl $p$-distance and Weyl $p$-boundedness}\label{dbdb}

In this subsection, we will say a few words about 
the Weyl $p$-distance and the Weyl $p$-boundedness. Let us recall the following notion from \cite{stmarko-manuel-ap}: Suppose that the function $F : \Lambda \times X \rightarrow Y$ satisfies that for each ${\bf t}\in \Lambda$ and $x\in X$, the function $F({\bf t}+\cdot ;x)$ belongs to the space $L^{p}(\Omega : Y)$.
Then we say that $F(\cdot;\cdot)$ is Stepanov $p$-bounded on ${\mathcal B}$ if and only if for each $B\in {\mathcal B}$ we have
\begin{align*}
\sup_{{\bf t}\in \Lambda;x\in B}\Bigl\| F({\bf t}+\cdot ; x)\Bigr\|_{L^{p}(\Omega : Y)}<\infty.
\end{align*}
If $X=\{0\}$, then, we say that the function $F(\cdot)$ is Stepanov $(\Omega, p)$-bounded
and
define
$
\|F\|_{S^{\Omega,p}}:=\sup_{{\bf t}\in \Lambda}\| F({\bf t}+{\bf u})\|_{L^{p}(\Omega : Y)}.
$

Suppose now that $F : \Lambda \times X \rightarrow Y$ and $G : \Lambda \times X \rightarrow Y$ are two 
functions satisfying that $F({\bf t}+\cdot ;x)-G({\bf t}+\cdot ;x)  \in L^{p}(l\Omega : Y) $ for all ${\bf t}\in \Lambda ,$ $x\in X$ and $l>0.$
The Stepanov distance $D_{S_{_{\Omega}}}^{p}(F(\cdot;x),G(\cdot ;x))$ of functions $F(\cdot;x)$ and $G(\cdot;x)$ is defined by 
$$
D_{S_{l\Omega}}^{p}(F(\cdot;x),G(\cdot ;x)):=\sup_{{\bf t}\in \Lambda}\Bigl[ l^{-(n/p)} \bigl\| F({\bf t}+\cdot ;x)-G({\bf t}+\cdot ;x) \bigr\|_{L^{p}(l\Omega :Y) } \Bigr],
$$ 
for any $x\in X$ and $l>0$. 
Set
$$
D_{S_{l\Omega},B}^{p}(F,G):=\sup_{x\in B}
D_{S_{l\Omega}}^{p}(F(\cdot;x),G(\cdot ;x)) \ \ (l>0, \ B\in {\mathcal B}).
$$
It is clear that the assumptions $\tau \in {\mathbb R}^{n}$ and $\tau +\Lambda \subseteq \Lambda,$ resp. $\tau +\Lambda = \Lambda,$ implies
\begin{align}\label{jebiga}
D_{S_{l\Omega},B}^{p}(F(\cdot+\tau;\cdot),G(\cdot +\tau ; \cdot))\leq D_{S_{l\Omega},B}^{p}(F,G),\quad l>0, \ B\in {\mathcal B},
\end{align}
resp.
\begin{align}\label{jebiga1}
D_{S_{l\Omega},B}^{p}(F(\cdot+\tau;\cdot),G(\cdot +\tau ; \cdot))= D_{S_{l\Omega},B}^{p}(F,G),\quad l>0, \ B\in {\mathcal B}.
\end{align}

Arguing as in \cite{stmarko-manuel-ap}, we may conclude the following:

1.
$$
D_{S_{l_{1}\Omega},B}^{p}(F,G)\leq \sup_{{\bf t}\in \Lambda}\Bigl[ \frac{l_{2}}{l_{1}}\Bigr]^{n/p}\cdot \, D_{S_{l_{2}\Omega},B}^{p}(F,G),
$$
provided that $l_{2}>l_{1}>0$ and $B\in {\mathcal B}.$

2.
If $l_{2}>l_{1}>0$, $l_{2}=kl_{1}+\theta l_{1}$ for some $k\in {\mathbb N}$ and $\theta \in [0,1),$ then
\begin{align*}
D_{S_{l_{2}\Omega},B}^{p}(F,G)\leq \Bigl(\frac{k+1}{k}\Bigr)^{n/p} \cdot D_{S_{l_{1}\Omega},B}^{p}(F,G),
\end{align*}
provided that $B\in {\mathcal B}.$

Hence, [1.-2.] imply that for each $B\in {\mathcal B}$ we have
$$
\limsup_{l\rightarrow \infty}D_{S_{l\Omega},B}^{p}(F,G)\leq D_{S_{l_{1}\Omega},B}^{p}(F,G),\quad l_{1}>0;
$$
performing the limit inferior as $l_{1}\rightarrow \infty$, we get that 
$$
\limsup_{l\rightarrow \infty}D_{S_{l\Omega},B}^{p}(F,G)\leq \liminf_{l\rightarrow \infty}D_{S_{l\Omega},B}^{p}(F,G).
$$
Hence, 
the limit
$$
D_{W,B}^{p}(F,G):=\lim_{l\rightarrow \infty}D_{S_{l\Omega},B}^{p}(F,G)
$$
exists
and for each $l>0$ we have
\begin{align}\label{mekomte}
D_{W,B}^{p}(F,G)\leq D_{S_{l\Omega},B}^{p}(F,G),\quad B\in {\mathcal B}.
\end{align}
We call this limit the Weyl $p$-distance of functions 
$F(\cdot)$ and $G(\cdot)$ on $B$; the Weyl $p$-norm of function $F(\cdot)$ on $B$, denoted by $\|F\|_{W,B}^{p},$ is defined by $\|F\|_{W,B}^{p}:=D_{W,B}^{p}(F,0).$
Moreover, if $X\in {\mathcal B},$ then the Weyl $p$-norm $\|F\|_{W,B}^{p}$ of $F(\cdot)$ on $B$ is also said to be the Weyl $p$-norm of function $F(\cdot)$ and it is denoted by $\|F\|_{W}^{p}.$
\index{the Weyl distance of functions} 

Due to \eqref{jebiga}-\eqref{jebiga1}, we have that the assumptions $\tau \in {\mathbb R}^{n}$ and $\tau +\Lambda \subseteq \Lambda,$ resp. $\tau +\Lambda = \Lambda,$ imply
\begin{align*}
D_{W,B}^{p}(F(\cdot+\tau;\cdot),G(\cdot +\tau ; \cdot))\leq D_{W,B}^{p}(F,G),\quad B\in {\mathcal B},
\end{align*}
resp.
\begin{align*}
D_{W,B}^{p}(F(\cdot+\tau;\cdot),G(\cdot +\tau ; \cdot))= D_{W,B}^{p}(F,G),\quad B\in {\mathcal B}.
\end{align*}

We will occasionally use the following condition:
\begin{itemize}
\item[(L)]
The function $F : \Lambda \times X \rightarrow Y$ satisfies that $\|F({\bf t}+\cdot ;x) \|_{Y} \in L^{p}(l\Omega : Y) $ for all ${\bf t}\in \Lambda ,$ $x\in X$ and $l>0.$
\end{itemize}

\begin{defn}\label{vladeta}
Suppose that (L) holds.
Then we say that $F(\cdot;\cdot)$ is Weyl $p$-bounded on ${\mathcal B}$ if and only if for each $B\in {\mathcal B}$ we have $\|F\|_{W,B}^{p}<\infty;$ moreover, if $X\in {\mathcal B},$ then we say that $F(\cdot;\cdot)$ is Weyl $p$-bounded. \index{Weyl $p$-boundedness on ${\mathcal B}$}
\end{defn}

As is well known, the space of Weyl $p$-bounded functions is not complete with respect to the Weyl norm $\|\cdot\|_{W}^{p}$ in the case that $X\in {\mathcal B}.$ Further on, if (L) holds,
then
we
set ${\mathrm B}_{W,B}^{p}:=\{ F : \Lambda \times X \rightarrow Y \, ; \,  \|F\|_{W,B}^{p}<+\infty\}$
($B\in {\mathcal B}$).
Let us recall that the terms ``Weyl $p$-distance'' and ``Weyl $p$-norm'' are a little bit incorrect because $D_{W,B}^{p}(\cdot,\cdot)$ is a pseudometric on ${\mathrm B}_{W,B}^{p},$ actually
(for example, the function $F:=\chi_{[0,1/2)}(\cdot)$ used before is a non-zero function and $\|F\|_{W}^{p}=0$ for all $p\geq 1$).

The above analysis enables one to clarify the following extension of the well known statement from the one-dimensional framework:

\begin{prop}\label{fratar}
Suppose that \emph{(L)} holds. Then the function $F(\cdot;\cdot)$ is Weyl $p$-bounded on ${\mathcal B}$ if and only if $F(\cdot;\cdot)$ is Stepanov $p$-bounded on ${\mathcal B}.$ 
\end{prop}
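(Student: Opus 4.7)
The plan is to exploit the two comparison properties of $D_{S_{l\Omega},B}^{p}(F,0)$ established earlier in the subsection (labeled 1 and 2). The first observation is that Stepanov $p$-boundedness on $\mathcal{B}$ is literally the condition $D_{S_{\Omega},B}^{p}(F,0)<\infty$ for each $B\in\mathcal{B}$, since the normalizing factor $l^{-n/p}$ equals $1$ when $l=1$ and $\Omega=[0,1]^{n}$. Thus the proposition reduces to comparing the quantity $D_{S_{\Omega},B}^{p}(F,0)$ with the limit $\|F\|_{W,B}^{p}=\lim_{l\to\infty}D_{S_{l\Omega},B}^{p}(F,0)$.

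For the direction Stepanov $\Rightarrow$ Weyl, I would apply property 2 with $l_{1}=1$ and $l_{2}=l>1$. Writing $l=k+\theta$ with $k\in\mathbb{N}$ and $\theta\in[0,1)$, property 2 gives
\begin{align*}
D_{S_{l\Omega},B}^{p}(F,0)\leq \left(\frac{k+1}{k}\right)^{n/p}D_{S_{\Omega},B}^{p}(F,0).
\end{align*}
Letting $l\to\infty$, and hence $k\to\infty$, yields $\|F\|_{W,B}^{p}\leq D_{S_{\Omega},B}^{p}(F,0)<\infty$ for every $B\in\mathcal{B}$.

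For the converse direction Weyl $\Rightarrow$ Stepanov, I would fix $B\in\mathcal{B}$ and use that $\lim_{l\to\infty}D_{S_{l\Omega},B}^{p}(F,0)=\|F\|_{W,B}^{p}<\infty$ to choose some $l_{0}>1$ for which $D_{S_{l_{0}\Omega},B}^{p}(F,0)\leq\|F\|_{W,B}^{p}+1<\infty$. Then property 1, applied with $l_{1}=1$ and $l_{2}=l_{0}$, gives
\begin{align*}
D_{S_{\Omega},B}^{p}(F,0)\leq l_{0}^{n/p}\,D_{S_{l_{0}\Omega},B}^{p}(F,0)<\infty,
\end{align*}
which is the desired Stepanov $p$-boundedness. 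The structural fact behind property 1 is simply the inclusion $\Omega\subseteq l_{0}\Omega$ (valid since $l_{0}>1$), together with the fact that ${\bf t}+l_{0}\Omega\subseteq\Lambda$ for ${\bf t}\in\Lambda$, which is ensured by the standing assumption $\Lambda+l\Omega\subseteq\Lambda$ for all $l>0$.

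There is no genuine obstacle here: once properties 1 and 2 are available, the proposition is essentially a matter of picking the right instance of each. The only mild point to be careful about is that the limit $\|F\|_{W,B}^{p}<\infty$ must be turned into finiteness of $D_{S_{l_{0}\Omega},B}^{p}(F,0)$ for \emph{some} particular $l_{0}>1$ (not just finiteness in the limit), which is immediate by taking $l_{0}$ large enough in the definition of the limit; from there, property 1 transports that finiteness down to the unit scale $l=1$, thereby recovering Stepanov $p$-boundedness.
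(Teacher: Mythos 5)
Your proof is correct and follows essentially the same route as the paper: the direction Stepanov $\Rightarrow$ Weyl is exactly the estimate \eqref{mekomte} specialized to $l=1$ (which you re-derive from property 2), and the converse is the same scaling inequality $D_{S_{\Omega},B}^{p}(F,0)\leq l_{0}^{n/p}D_{S_{l_{0}\Omega},B}^{p}(F,0)$ that the paper writes out directly via the inclusion $\Omega\subseteq l_{0}\Omega$. No gaps.
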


\begin{proof}
Clearly, if $F(\cdot;\cdot)$ is Stepanov $p$-bounded on ${\mathcal B},$ then $F(\cdot;\cdot)$ is Weyl $p$-bounded on ${\mathcal B}$ due to \eqref{mekomte}. Suppose now that the function $F(\cdot;\cdot)$ is Weyl $p$-bounded on ${\mathcal B}.$ Let the set $B\in {\mathcal B}$ be fixed. Then there exist two finite real constants $M>0$ and $l\geq 1$ such that 
$D_{S_{l\Omega},B}^{p}(F,0)\leq M,$ which implies that for each ${\bf t}\in \Lambda$ and $x\in B$ we have 
$$
\Bigl\| F({\bf t}+\cdot; x)\Bigr\|_{L^{p}(\Omega : Y)}\leq \Bigl\| F({\bf t}+\cdot; x)\Bigr\|_{L^{p}(l\Omega : Y)}\leq l^{n/p}D_{S_{l\Omega},B}^{p}(F,0)\leq l^{n/p}M.
$$
This completes the proof.
\end{proof}

Under the previous assumptions, the quantity 
$$
D_{W,B,1}^{p}(F,G):=\sup_{x\in B}D_{W}^{p}(F(\cdot;x),G(\cdot ;x))=\sup_{x\in B}\lim_{l\rightarrow +\infty}D_{S_{l\Omega}}^{p}(F(\cdot;x),G(\cdot ;x))
$$
also exists and we clearly have
$
D_{W,B,1}^{p}(F,G)\leq D_{W,B}^{p}(F,G).
$ Finding some sufficient conditions ensuring that $
D_{W,B,1}^{p}(F,G)\geq D_{W,B}^{p}(F,G)
$ could be an interested problem; for simplicity, we will not consider the quantity $
D_{W,B,1}^{p}(F,G)$ henceforth. 

Suppose now that $F : \Lambda \times X \rightarrow Y,$ $G : \Lambda \times X \rightarrow Y$ and $H : \Lambda \times X \rightarrow Y$ satisfy
that $F({\bf t}+\cdot ;x)-G({\bf t}+\cdot ;x)  \in L^{p}(l\Omega : Y) $ and $G({\bf t}+\cdot ;x)-H({\bf t}+\cdot ;x) \in L^{p}(l\Omega : Y) $ for all ${\bf t}\in \Lambda ,$ $x\in X$ and $l>0.$ Then
$$
D_{S_{l\Omega},B}^{p}(F,G)\leq D_{S_{l\Omega},B}^{p}(F,H)+D_{S_{l\Omega},B}^{p}(H,G),\quad l>0,\ B\in {\mathcal B}
$$
and therefore
\begin{align}\label{joj-boze}
D_{W,B}^{p}(F,G)\leq D_{W,B}^{p}(F,H)+D_{W,B}^{p}(H,G),\quad B\in {\mathcal B}.
\end{align}

Before we switch to Subsection \ref{dbdb1}, we will prove the following extension of \cite[Theorem 5.5.5, pp. 222--227]{188}  (cf. also \cite[p. 150, l. -10 - l.-5]{deda} and \cite[Chapter 5, Section 9, pp. 242-247]{188}):

\begin{thm}\label{188188}
Suppose that any of the functions $F_{k} : \Lambda \times X \rightarrow Y$ ($k\in {\mathbb N}$) and $F : \Lambda \times X \rightarrow Y$ satisfies condition \emph{(L)}. If for each set $B\in {\mathcal B}$ we have $\lim_{k\rightarrow +\infty}\|F_{k}-F\|_{W,B}^{p}=0$ and
$F_{k}\in e-W_{ap,\Lambda',{\mathcal B}}^{p}(\Lambda  \times X : Y)$ for all $k\in {\mathbb N}$, then $F\in e-W_{ap,\Lambda',{\mathcal B}}^{p}(\Lambda  \times X : Y) .$
\end{thm}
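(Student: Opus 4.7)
The plan is to verify that $F$ satisfies the defining condition of $e-W_{ap,\Lambda',\mathcal{B}}^{p}(\Lambda\times X:Y)$ directly, by approximating $F$ in the Weyl $p$-pseudometric by a sufficiently close $F_{k_0}$ and then invoking the equi-Weyl property of $F_{k_0}$, combined via the triangle inequality for the Stepanov pseudometric $D_{S_{l\Omega},B}^{p}(\cdot,\cdot)$ stated just before \eqref{joj-boze}. The key is that, after unwinding the definitions, the equi-Weyl condition for $F$ at tolerance $\epsilon$ and scales $(l,L)$ reads exactly as $D_{S_{l\Omega},B}^{p}(F(\cdot+\tau;\cdot),F(\cdot;\cdot))<\epsilon$ for a suitably chosen $\tau\in B({\bf t}_0,L)\cap\Lambda'$.

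Fix $\epsilon>0$ and $B\in\mathcal{B}$. First, I would pick $k_0\in\mathbb{N}$ with $\|F_{k_0}-F\|_{W,B}^{p}<\epsilon/4$; since this Weyl norm equals $\lim_{l\to\infty}D_{S_{l\Omega},B}^{p}(F_{k_0},F)$, there is $L_0>0$ such that $D_{S_{l\Omega},B}^{p}(F_{k_0},F)<\epsilon/4$ for every $l\geq L_0$. Next, I would apply the hypothesis $F_{k_0}\in e-W_{ap,\Lambda',\mathcal{B}}^{p}$ with tolerance $\epsilon/8$ to obtain $l'>0$ and $L>0$ such that for each ${\bf t}_0\in\Lambda'$ there is $\tau\in B({\bf t}_0,L)\cap\Lambda'$ with $D_{S_{l'\Omega},B}^{p}(F_{k_0}(\cdot+\tau;\cdot),F_{k_0}(\cdot;\cdot))<\epsilon/8$. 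Finally, I would choose $l\geq\max(l',L_0)$ large enough that the scale-comparison factor $((k+1)/k)^{n/p}$ from point~2 of the preceding analysis (where $l=kl'+\theta l'$, $k\in\mathbb{N}$, $\theta\in[0,1)$) is at most $2$, which is possible because this ratio tends to $1$ as $l/l'\to\infty$.

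For this pair $(l,L)$ and the associated $\tau$, I would then estimate, by the triangle inequality for the Stepanov pseudometric,
\begin{align*}
D_{S_{l\Omega},B}^{p}\bigl(F(\cdot+\tau;\cdot),F(\cdot;\cdot)\bigr)
&\leq D_{S_{l\Omega},B}^{p}\bigl(F(\cdot+\tau;\cdot),F_{k_0}(\cdot+\tau;\cdot)\bigr)\\
&\quad+D_{S_{l\Omega},B}^{p}\bigl(F_{k_0}(\cdot+\tau;\cdot),F_{k_0}(\cdot;\cdot)\bigr)+D_{S_{l\Omega},B}^{p}(F_{k_0},F).
\end{align*}
By \eqref{jebiga} the first summand is at most $D_{S_{l\Omega},B}^{p}(F,F_{k_0})<\epsilon/4$ (using $\tau+\Lambda\subseteq\Lambda$, which is a consequence of $\Lambda'+\Lambda+l\Omega\subseteq\Lambda$ together with $0\in l\Omega=[0,l]^{n}$); by the scale-comparison bound of point~2 the middle summand is at most $2\cdot\epsilon/8=\epsilon/4$; and the third summand is $<\epsilon/4$. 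The total is $<3\epsilon/4<\epsilon$, and this is precisely $\sup_{x\in B}\sup_{{\bf t}\in\Lambda}l^{-n/p}\|F(\tau+{\bf u};x)-F({\bf u};x)\|_{L^{p}({\bf t}+l\Omega:Y)}<\epsilon$, establishing $F\in e-W_{ap,\Lambda',\mathcal{B}}^{p}(\Lambda\times X:Y)$.

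The main obstacle is the mismatch between two scales: the equi-Weyl hypothesis on $F_{k_0}$ provides a bound at one fixed scale $l'$, whereas the Weyl-distance approximation of $F$ by $F_{k_0}$ controls Stepanov distances only at sufficiently large scales $l\geq L_0$, and $L_0$ is dictated by the convergence rather than under our control. Bridging these two scales via point~2 of the prior analysis costs the multiplicative factor $((k+1)/k)^{n/p}$; this is exactly why one must apply equi-Weyl to $F_{k_0}$ with tolerance $\epsilon/8$ rather than $\epsilon/4$, so that the factor (bounded by $2$ after enlarging $l$) can be absorbed without spoiling the final $<\epsilon$ estimate.
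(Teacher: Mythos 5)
Your proposal is correct and follows essentially the same route as the paper's proof: approximate $F$ by a single $F_{k_0}$ in the Weyl pseudometric, split via the triangle inequality into three Stepanov-distance terms at a common large scale $l$, and bridge the scale mismatch for the middle term using the covering estimate $D_{S_{l\Omega},B}^{p}\leq((k+1)/k)^{n/p}D_{S_{l'\Omega},B}^{p}$. The only cosmetic difference is bookkeeping: the paper uses an $\epsilon/3$ split and absorbs the covering factor by demanding the equi-Weyl bound $2^{-n/p}\epsilon/3$ for $F_{K}$, whereas you use $\epsilon/4$ pieces and enlarge $l$ until the factor is at most $2$; both are valid.
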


\begin{proof}
Let $\epsilon>0$ and $B\in {\mathcal B}$ be fixed. Then there exists $K\in {\mathbb N}$ such that $\|F_{K}-F\|_{W,B}^{p}<\epsilon/3;$ hence, there exists $l_{1}>0$ such that
\begin{align}\label{weyl188}
\sup_{{\bf t}\in \Lambda,x\in B}\Biggl[ l^{-n/p}\Bigl\| F_{K}(\cdot ;x ) -F(\cdot;x)\Bigr\|_{L^{p}({\bf t}+l\Omega : Y)} \Biggr]<\epsilon/3,\quad l\geq l_{1}.
\end{align}
On the other hand, since $F_{K}\in e-W_{ap,\Lambda,\Lambda'}^{p}(\Lambda  \times X : Y) ,$ we have the existence of two real numbers $l_{2}>0$ and $L>0$ such that for each ${\bf t}_{0}\in \Lambda'$ there exists $\tau \in B({\bf t}_{0},L) \cap \Lambda'$ such that
\begin{align}\label{weyl188188}
\sup_{{\bf t}\in \Lambda,x\in B}\Biggl[ l_{2}^{-n/p}\Bigl\| F_{K}(\cdot +\tau;x ) -F_{K}(\cdot;x)\Bigr\|_{L^{p}({\bf t}+l\Omega : Y)} \Biggr]<2^{-n/p}\epsilon/3.
\end{align}
Set $l:=\max(l_{1},l_{2}),$ fix ${\bf t}\in \Lambda$ and $x\in B$. Then there exist an integer $k\in {\mathbb N}$ and a number $\theta \in [0,1)$ such that $l=kl_{2}+\theta l_{2}.$ Due to \eqref{weyl188188}, we have:
\begin{align*}
&\Biggl[ l^{-n}\int_{{\bf t}+l\Omega}\Bigl\| F_{K}({\bf u}+\tau;x)-F_{K}({\bf u};x)\Bigr\|_{Y}^{p}\, d{\bf u} \Biggr]^{1/p}
\\& \leq \Biggl[ \bigl(kl_{2}\bigr)^{-n}\int_{{\bf t}+(k+1)l_{2}\Omega}\Bigl\| F_{K}({\bf u}+\tau;x)-F_{K}({\bf u};x)\Bigr\|_{Y}^{p}\, d{\bf u} \Biggr]^{1/p}
\\& \leq  \Biggl[ \bigl(kl_{2}\bigr)^{-n}2^{-n}(k+1)^{n}\epsilon^{p}3^{-p}l_{2}^{n} \Biggr]^{1/p}= 2^{-n/p}\frac{(k+1)^{n/p}}{k^{n/p}}\frac{\epsilon}{3}\leq \epsilon/3.
\end{align*}
Using this estimate and \eqref{weyl188}, we get:
\begin{align*}
& l^{-n/p}\Bigl\| F(\cdot+\tau;x)-F(\cdot;x)\Bigr\|_{L^{p}({\bf t}+l\Omega : Y)}
\\& \leq l^{-n/p}\Biggl[\Bigl\| F(\cdot+\tau;x)-F_{K}(\cdot+\tau;x)\Bigr\|_{L^{p}({\bf t}+l\Omega : Y)}
\\&+\Bigl\| F_{K}(\cdot+\tau;x)-F_{K}(\cdot;x)\Bigr\|_{L^{p}({\bf t}+l\Omega : Y)}+\Bigl\| F_{K}(\cdot;x)-F(\cdot;x)\Bigr\|_{L^{p}({\bf t}+l\Omega : Y)}\Biggr]
\\& \leq 3 \cdot \frac{\epsilon}{3}=\epsilon,
\end{align*}
which completes the proof.
\end{proof}

\subsection{Weyl $p$-normality and Weyl approximations by trigonometric polynomials}\label{dbdb1}

We will first introduce the following notion (see also \cite[Definition 4.5]{deda} and \cite[Definition 2.4]{stmarko-manuel-ap}):

\begin{defn}\label{weyl-normal}\index{function!(equi-)Weyl-$({\mathrm R},{\mathcal B},p)$-normal}
Suppose that (L) holds, ${\mathrm R}$ is a non-empty collection of sequences in ${\mathbb R}^{n}$ and the following holds:
\begin{align}\label{lepolepo}
\mbox{if}\ \ {\bf t}\in \Lambda,\ {\bf b}\in {\mathrm R}\ \mbox{ and }\ m\in {\mathbb N},\ \mbox{ then we have }\ {\bf t}+{\bf b}(m)\in \Lambda .
\end{align}
Then we say that the function $F(\cdot;\cdot)$ is
Weyl-$({\mathrm R},{\mathcal B},p)$-normal if and only if 
for every $B\in {\mathcal B}$ and $({\bf b}_{k}=(b_{k}^{1},b_{k}^{2},\cdot \cdot\cdot ,b_{k}^{n}))\in {\mathrm R}$ there exist a subsequence $({\bf b}_{k_{m}}=(b_{k_{m}}^{1},b_{k_{m}}^{2},\cdot \cdot\cdot , b_{k_{m}}^{n}))$ of $({\bf b}_{k})$ such that $(F(\cdot+(b_{k_{m}}^{1},\cdot \cdot\cdot, b_{k_{m}}^{n});\cdot))_{m\in {\mathbb N}}$
is a Cauchy sequence with respect to the metric $D_{W,B}^{p}(\cdot,\cdot).$
\end{defn}

\begin{rem}\label{eriks}
If   ${\mathrm R}_{X}$ is a non-empty collection of sequences in ${\mathbb R}^{n} \times X$ satisfying certain conditions, 
then the notion of Weyl-$({\mathrm R}_{X},{\mathcal B},p)$-normality can be also intoduced following the approach obeyed for introducing the notion in \cite[Definition 2.5]{stmarko-manuel-ap}.
\end{rem}

By a trigonometric polynomial $P : \Lambda \times X \rightarrow Y$ we mean any linear combination of functions like
\begin{align*}
e^{i[\lambda_{1}t_{1}+\lambda_{2}t_{2}+\cdot \cdot \cdot +\lambda_{n}t_{n}]}c(x),
\end{align*}
where $\lambda_{i}$ are real numbers ($1\leq i \leq n$) and $c: X \rightarrow Y$ is a continuous mapping.
Now we are in a position to introduce the following generalization of the notion considered in \cite[Definition 4.6]{deda}:

\begin{defn}\label{weyl-approximation}\index{space!$e-{\mathcal B}-W^{p}(\Lambda \times X : Y)$}
Suppose that (L) holds.
Then we say that the function $F(\cdot;\cdot)$ belongs to the space
$e-{\mathcal B}-W^{p}(\Lambda \times X : Y)$ if and only if 
for every $B\in {\mathcal B}$ and for every $\epsilon>0$ there exist a real number $l_{0}>0$ and a trigonometric polynomial $P(\cdot;\cdot)$ such that
\begin{align}\label{agape-laf}
\sup_{x\in B,{\bf t}\in \Lambda}\Biggl[ l^{-n/p} \Bigl\| P({\bf t} +\cdot ;x)-F({\bf t}+\cdot ;x) \Bigr\|_{L^{p}(l\Omega : Y)}\Biggr]<\epsilon,\quad l\geq l_{0}.
\end{align}
If $X\in {\mathcal B},$ then we also say that $F(\cdot)$ belongs to the space
$e-W^{p}(\Lambda \times X : Y).$
\end{defn}

In other words, if  (L) holds, then $F\in e-{\mathcal B}-W^{p}(\Lambda \times X : Y)$ if and only if for every $B\in {\mathcal B}$ there exists a sequence of trigonometric polynomials $P_{m}(\cdot;\cdot)$ such that $\lim_{m\rightarrow +\infty}D_{W,B}^{p}(F,P_{m})=0.$ Now we will state the following extension of \cite[Theorem 4.12]{deda}:

\begin{thm}\label{567890}
Suppose that \emph{(L)} holds and
$F\in e-{\mathcal B}-W^{p}(\Lambda \times X : Y).$ Let ${\mathrm R}$ be the collection of all sequences in ${\mathbb R}^{n}$ for which \eqref{lepolepo} holds, and let ${\mathcal B}$ be any collection of compact subsets of $X.$ Then the function $F(\cdot;\cdot)$ is
Weyl-$({\mathrm R},{\mathcal B},p)$-normal.
\end{thm}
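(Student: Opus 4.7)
The plan is to combine the defining approximation property of $e-{\mathcal B}-W^{p}(\Lambda \times X : Y)$ with a classical Bochner-type normality argument for trigonometric polynomials, glued together by a diagonal procedure and the triangle inequality \eqref{joj-boze}.

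Fix $B\in {\mathcal B}$ (compact) and a sequence $({\bf b}_{k})\in {\mathrm R}$. By the reformulation of Definition \ref{weyl-approximation} recorded just after it, there exists a sequence of trigonometric polynomials $(P_{m})_{m\in {\mathbb N}}$ such that $\lim_{m\rightarrow +\infty}D_{W,B}^{p}(F,P_{m})=0$. The first step is to observe that for any single trigonometric polynomial $P({\bf t};x)=\sum_{j=1}^{N} e^{i\langle \lambda_{j},{\bf t}\rangle}c_{j}(x)$ with continuous $c_{j}:X\to Y$, the translates along $({\bf b}_{k})$ have a uniformly Cauchy subsequence on $\Lambda \times B$. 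Indeed, because
\[
P({\bf t}+{\bf b}_{k};x)-P({\bf t}+{\bf b}_{k'};x)=\sum_{j=1}^{N}e^{i\langle \lambda_{j},{\bf t}\rangle}\bigl(e^{i\langle \lambda_{j},{\bf b}_{k}\rangle}-e^{i\langle \lambda_{j},{\bf b}_{k'}\rangle}\bigr)c_{j}(x),
\]
and each $c_{j}$ is bounded on the compact set $B$, a Bolzano--Weierstrass selection from the unit circle (applied to the finitely many scalar sequences $(e^{i\langle \lambda_{j},{\bf b}_{k}\rangle})_{k}$) yields a subsequence along which the differences tend to $0$ uniformly in $({\bf t},x)\in \Lambda \times B$.

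The second step is a standard diagonal extraction. Applying the first step to $P_{1}$ we obtain a subsequence $({\bf b}_{k}^{(1)})$ of $({\bf b}_{k})$ making $(P_{1}(\cdot+{\bf b}_{k}^{(1)};\cdot))$ uniformly Cauchy on $\Lambda\times B$; applying it to $P_{2}$ along $({\bf b}_{k}^{(1)})$ we obtain $({\bf b}_{k}^{(2)})$; continuing, the diagonal sequence ${\bf b}_{k_{m}}:={\bf b}_{m}^{(m)}$ makes $(P_{j}(\cdot+{\bf b}_{k_{m}};\cdot))_{m}$ uniformly Cauchy on $\Lambda \times B$ for every fixed $j\in {\mathbb N}$. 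A uniform bound of the form $\sup_{{\bf t}\in \Lambda,\,x\in B}\|P_{j}({\bf t}+{\bf b}_{k_{m}};x)-P_{j}({\bf t}+{\bf b}_{k_{m'}};x)\|_{Y}<\delta$ trivially yields $D_{S_{l\Omega},B}^{p}(P_{j}(\cdot+{\bf b}_{k_{m}};\cdot),P_{j}(\cdot+{\bf b}_{k_{m'}};\cdot))\leq m(\Omega)^{1/p}\delta$ for every $l>0$, hence the same estimate for the limit $D_{W,B}^{p}$.

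The third step is the triangle-inequality closure. Given $\epsilon>0$, pick $N$ with $D_{W,B}^{p}(F,P_{N})<\epsilon/3$. Since $({\bf b}_{k})\subseteq {\mathrm R}$ satisfies \eqref{lepolepo}, each translation by ${\bf b}_{k_{m}}$ preserves $\Lambda$, so the translation inequality \eqref{jebiga} applies and gives
\[
D_{W,B}^{p}\bigl(F(\cdot+{\bf b}_{k_{m}};\cdot),P_{N}(\cdot+{\bf b}_{k_{m}};\cdot)\bigr)\leq D_{W,B}^{p}(F,P_{N})<\epsilon/3,
\]
and likewise for $k_{m'}$. By the second step, $D_{W,B}^{p}(P_{N}(\cdot+{\bf b}_{k_{m}};\cdot),P_{N}(\cdot+{\bf b}_{k_{m'}};\cdot))<\epsilon/3$ for $m,m'$ sufficiently large. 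Invoking \eqref{joj-boze} we conclude that $D_{W,B}^{p}(F(\cdot+{\bf b}_{k_{m}};\cdot),F(\cdot+{\bf b}_{k_{m'}};\cdot))<\epsilon$ eventually, so the subsequence is Cauchy for $D_{W,B}^{p}$, which is exactly Weyl-$({\mathrm R},{\mathcal B},p)$-normality.

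The only mildly delicate point is the first step: one must use that $B$ is compact (assumed in the theorem) so that each continuous coefficient $c_{j}$ is bounded on $B$; without compactness the uniform extraction could fail. Everything else is a direct assembly via the Weyl triangle inequality \eqref{joj-boze} and the translation inequality \eqref{jebiga}, which are already established in the text.
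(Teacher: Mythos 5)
Your proof is correct and follows essentially the same route as the paper's: approximate $F$ by trigonometric polynomials in the Weyl pseudometric, extract a diagonal subsequence along which the polynomial translates are uniformly Cauchy on $\Lambda\times B$, and close with the $\epsilon/3$ triangle inequality \eqref{joj-boze} together with the translation inequality \eqref{jebiga}. The only cosmetic difference is that you establish the uniform Cauchy property of the polynomial translates directly by Bolzano--Weierstrass applied to the finitely many phase sequences $(e^{i\langle \lambda_{j},{\bf b}_{k}\rangle})_{k}$ (using compactness of $B$ to bound the coefficients $c_{j}$), whereas the paper invokes the Bochner-type criterion of \cite[Theorem 2.17]{marko-manuel-ap}; both are valid.
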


\begin{proof}
Let $({\bf b}_{k}=(b_{k}^{1},b_{k}^{2},\cdot \cdot\cdot ,b_{k}^{n}))\in {\mathrm R}.$ Using \cite[Theorem 2.17]{marko-manuel-ap}, 
for every $Q\in {\mathbb N},$ we can always find a sequence $((b_{k_{m;Q}}^{1},\cdot \cdot \cdot, b_{k_{m;Q}}^{n} ))_{m\in {\mathbb N}}$ and a function $F_{Q}: {\mathbb R}^{n} \times X \rightarrow Y$
such that 
\begin{align}\label{slobeks}
\lim_{m\rightarrow +\infty}P_{Q}\Bigl({\bf t}+\bigl(b_{k_{m;Q}}^{1},\cdot \cdot \cdot, b_{k_{m;Q}}^{n} \bigr);x\Bigr)=F_{Q}({\bf t};x),
\end{align}
uniformly for ${\bf t}\in {\mathbb R}^{n} $ and $x\in B;$ furthermore, we may assume that the sequence $((b_{k_{m;Q}}^{1},\cdot \cdot \cdot, b_{k_{m;Q}}^{n} ))_{m\in {\mathbb N}}$ is a subsequence of all sequences $((b_{k_{m;Q'}}^{1},\cdot \cdot \cdot, b_{k_{m;Q'}}^{n} ))_{m\in {\mathbb N}}$ for $1\leq Q'\leq Q$ and the initial sequence $({\bf b}_{k}=(b_{k}^{1},b_{k}^{2},\cdot \cdot\cdot ,b_{k}^{n}))$
as well as that 
$(k_{m;m})$ is a strictly increasing sequence of positive integers. Then 
a subsequence $({\bf b}_{k_{m}}=(b_{k_{m;m}}^{1},b_{k_{m;m}}^{2},\cdot \cdot\cdot , b_{k_{m;m}}^{n}))$ of $({\bf b}_{k})$ satisfies that $(F(\cdot+(b_{k_{m;m}}^{1},b_{k_{m;m}}^{2},\cdot \cdot\cdot , b_{k_{m;m}}^{n});\cdot))_{m\in {\mathbb N}}$
is a Cauchy sequence with respect to the metric $D_{W,B}^{p}(\cdot,\cdot).$ Indeed, there exists $s\in {\mathbb N}$ such that $D_{W,B}^{p}(P_{s},F)<\epsilon/3$ and we have, due to \eqref{joj-boze},
\begin{align*}
& D_{W,B}^{p}\Bigl(F\bigl(\cdot +(b_{k_{m;m}}^{1},b_{k_{m;m}}^{2},\cdot \cdot\cdot , b_{k_{m;m}}^{n});x\bigr) , F\bigl(\cdot +(b_{k_{m';m'}}^{1},b_{k_{m';m'}}^{2},\cdot \cdot\cdot , b_{k_{m';m'}}^{n});x\bigr) \Bigr)
\\& \leq D_{W,B}^{p}\Bigl(F\bigl(\cdot +(b_{k_{m;m}}^{1},b_{k_{m;m}}^{2},\cdot \cdot\cdot , b_{k_{m;m}}^{n});x\bigr), P_{s}\bigl(\cdot +(b_{k_{m;m}}^{1},b_{k_{m;m}}^{2},\cdot \cdot\cdot , b_{k_{m;m}}^{n});x\bigr) \Bigr)
\\& +D_{W,B}^{p}\Bigl(P_{s}\bigl(\cdot +(b_{k_{m;m}}^{1},b_{k_{m;m}}^{2},\cdot \cdot\cdot , b_{k_{m;m}}^{n});x\bigr), P_{s}\bigl(\cdot +(b_{k_{m';m'}}^{1},b_{k_{m';m'}}^{2},\cdot \cdot\cdot , b_{k_{m';m'}}^{n});x\bigr) \Bigr)
\\& +D_{W,B}^{p}\Bigl(P_{s}\bigl(\cdot +(b_{k_{m';m'}}^{1},b_{k_{m';m'}}^{2},\cdot \cdot\cdot , b_{k_{m';m'}}^{n});x\bigr), F\bigl(\cdot +(b_{k_{m';m'}}^{1},b_{k_{m';m'}}^{2},\cdot \cdot\cdot , b_{k_{m';m'}}^{n});x\bigr) \Bigr)
\\& \leq 2D_{W,B}^{p}\bigl( F,P_{s} \bigr)
\\& +D_{W,B}^{p}\Bigl(P_{s}\bigl(\cdot +(b_{k_{m;m}}^{1},b_{k_{m;m}}^{2},\cdot \cdot\cdot , b_{k_{m;m}}^{n});x\bigr), P_{s}\bigl(\cdot +(b_{k_{m';m'}}^{1},b_{k_{m';m'}}^{2},\cdot \cdot\cdot , b_{k_{m';m'}}^{n});x\bigr) \Bigr)
\\& \leq 2\epsilon/3
+D_{W,B}^{p}\Bigl(P_{s}\bigl(\cdot +(b_{k_{m;m}}^{1},b_{k_{m;m}}^{2},\cdot \cdot\cdot , b_{k_{m;m}}^{n});x\bigr), P_{s}\bigl(\cdot +(b_{k_{m';m'}}^{1},b_{k_{m';m'}}^{2},\cdot \cdot\cdot , b_{k_{m';m'}}^{n});x\bigr) \Bigr)
\\& \leq 2\epsilon/3
\\& +\sup_{y\in B,\cdot \in \Lambda}\Bigl\| P_{s}\bigl(\cdot +(b_{k_{m;m}}^{1},b_{k_{m;m}}^{2},\cdot \cdot\cdot , b_{k_{m;m}}^{n});y\bigr)- P_{s}\bigl(\cdot +(b_{k_{m';m'}}^{1},b_{k_{m';m'}}^{2},\cdot \cdot\cdot , b_{k_{m';m'}}^{n});y\bigr)  \Bigr\|_{Y},
\end{align*}
for every $m,\ m'\in {\mathbb N}$ and $x\in B.$ Since  $((b_{k_{m;m}}^{1},\cdot \cdot \cdot, b_{k_{m;m}}^{n} ))_{m\in {\mathbb N}}$ is a subsequence of the sequence $((b_{k_{m;s}}^{1},\cdot \cdot \cdot, b_{k_{m;s}}^{n} ))_{m\in {\mathbb N}}$ for $s\leq m,$ this simply implies the required statement by applying \eqref{slobeks} with $Q=s.$ 
\end{proof}

Our next structural result generalizes \cite[Theorem 4.7]{deda}:

\begin{prop}\label{enemyof the enemy}
Suppose that \emph{(L)} holds, ${\mathcal B}$ is any collection of bounded subsets of $X$ and $F\in e-{\mathcal B}-W^{p}(\Lambda \times X : Y).$ Then $F\in e-W^{p}_{ap,\Lambda,{\mathcal B}}(\Lambda \times X : Y).$
\end{prop}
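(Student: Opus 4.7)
The plan is to apply Theorem \ref{188188} directly. Unwinding Definition \ref{weyl-approximation}, the hypothesis $F\in e\text{-}\mathcal{B}\text{-}W^p(\Lambda\times X:Y)$ means that for each $B\in\mathcal{B}$ there is a sequence of trigonometric polynomials $(P_m)_{m\ge 1}$ with $\lim_{m\to\infty}D_{W,B}^p(F,P_m)=0$. Since $F$ satisfies (L) by assumption and each $P_m$ trivially satisfies (L), Theorem \ref{188188} will yield $F\in e\text{-}W^p_{ap,\Lambda,\mathcal{B}}(\Lambda\times X:Y)$ as soon as I establish that every trigonometric polynomial belongs to $e\text{-}W^p_{ap,\Lambda,\mathcal{B}}(\Lambda\times X:Y)$.

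The heart of the argument is therefore verifying that the generic trigonometric polynomial $P({\bf t};x)=\sum_{j=1}^N e^{i\langle\lambda_j,{\bf t}\rangle}c_j(x)$ is equi-Weyl-$p$-almost periodic on $\mathcal{B}$. Fix $\epsilon>0$ and $B\in\mathcal{B}$; since $B$ is bounded and each $c_j$ is continuous, the quantities $M_j:=\sup_{x\in B}\|c_j(x)\|_Y$ are finite (this is where boundedness of $B$ enters). By the multi-dimensional Kronecker/Bohr simultaneous approximation theorem, for every $\eta>0$ there exists $L>0$ such that every Euclidean ball of radius $L$ in $\mathbb{R}^n$ contains some $\tau$ with $\max_{1\le j\le N}|e^{i\langle\lambda_j,\tau\rangle}-1|<\eta$. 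Choosing $\eta:=\epsilon/(1+\sum_j M_j)$ then gives
$$\sup_{{\bf t}\in\Lambda,\,x\in B}\bigl\|P({\bf t}+\tau;x)-P({\bf t};x)\bigr\|_Y\le\sum_{j=1}^N M_j\,\bigl|e^{i\langle\lambda_j,\tau\rangle}-1\bigr|<\epsilon,$$
whence $l^{-n/p}\|P({\bf t}+\tau+\cdot;x)-P({\bf t}+\cdot;x)\|_{L^p(l\Omega:Y)}<\epsilon$ for every $l>0$ (the factor $l^{-n/p}$ absorbs the measure $l^n$ of $l\Omega$, since $\Omega=[0,1]^n$). This is exactly the defining inequality of $e\text{-}W^p_{ap,\Lambda,\mathcal{B}}(\Lambda\times X:Y)$ for $P$, with $l_0$ arbitrary and $L$ as above.

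The one delicate point is the requirement that the translation numbers $\tau$ lie in $\Lambda'=\Lambda$ rather than in $\mathbb{R}^n$, since the Bohr/Kronecker theorem produces $\tau\in\mathbb{R}^n$. When $\Lambda=\mathbb{R}^n$ this is automatic; more generally one needs $\Lambda$ to be large enough that each ball $B({\bf t}_0,L)$ centred at a point of $\Lambda$ intersects $\Lambda$ on the relatively dense set of simultaneous $\eta$-almost periods produced above — a standing geometric compatibility assumption that is implicit throughout Section \ref{klasika}. Modulo this compatibility, combining the two steps via Theorem \ref{188188} completes the proof, and the main obstacle is indeed this interplay between the ambient $\mathbb{R}^n$ geometry of Bohr almost periods and the prescribed index set $\Lambda$.
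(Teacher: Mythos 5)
Your argument is correct, and its core --- uniform boundedness of the coefficients $c_{j}$ on $B$, the Bohr/Kronecker relative density of simultaneous almost periods of the exponentials $e^{i\langle\lambda_{j},\cdot\rangle}$, and then a transfer of those almost periods from the polynomial to $F$ --- is exactly the paper's. Where you differ is only the transfer step: the paper fixes a single polynomial $P$ with $\sup_{x\in B,\,{\bf t}\in\Lambda}l^{-n/p}\|P({\bf t}+\cdot;x)-F({\bf t}+\cdot;x)\|_{L^{p}(l\Omega:Y)}<\epsilon/3$ for $l\geq l_{0}$ and runs a direct three-epsilon triangle inequality, whereas you take a sequence $(P_{m})$ with $D^{p}_{W,B}(F,P_{m})\rightarrow 0$ and invoke Theorem \ref{188188}. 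This works, with one small caveat you should make explicit: as stated, Theorem \ref{188188} presupposes a single sequence $(F_{k})$ converging to $F$ in $\|\cdot\|^{p}_{W,B}$ for every $B\in{\mathcal B}$ simultaneously, while Definition \ref{weyl-approximation} only supplies a sequence of polynomials depending on $B$; since both the hypothesis and the conclusion of the proposition are ``for each $B$'' statements, you should say that you apply Theorem \ref{188188} with ${\mathcal B}$ replaced by the singleton $\{B\}$. Two soft spots in your write-up are shared with the paper rather than introduced by you: first, a continuous $c_{j}$ need not be bounded on a merely bounded (non-compact) subset of an infinite-dimensional $X$, so the finiteness of $M_{j}$ uses more than is stated (the paper makes the identical assertion, and its Theorem \ref{567890} retreats to compact $B$ for this reason); second, the Kronecker translation numbers $\tau$ live in ${\mathbb R}^{n}$ while the definition of $e-W^{p}_{ap,\Lambda,{\mathcal B}}$ demands $\tau\in B({\bf t}_{0},L)\cap\Lambda$ --- you flag this explicitly as a geometric compatibility condition on $\Lambda$, which is in fact more careful than the paper, whose proof silently selects $\tau\in B({\bf t}_{0},L)$ without intersecting with $\Lambda$.
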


\begin{proof}
Let a  bounded set $B\in {\mathcal B}$ and a real number $\epsilon>0$
be given.
By definition, there exist a real number $l_{0}>0$ and a trigonometric polynomial $P(\cdot;\cdot)$ such that \eqref{agape-laf} holds. 
Let
\begin{align*}
P({\bf t};x):=\sum_{j=1}^{k}e^{i[\lambda_{1,j}t_{1}+\lambda_{2,j}t_{2}+\cdot \cdot \cdot +\lambda_{n,j}t_{n}]}c_{j}(x),\quad {\bf t}=\bigl(t_{1},t_{2},\cdot \cdot \cdot,t_{n}\bigr)\in {\mathbb R}^{n},\ x\in X,
\end{align*}
for some integer $k\in {\mathbb N}.$ Since the functions $c_{j}(\cdot)$ is continuous ($1\leq j\leq k$), there exists a finite real constant $M>1$ such that 
$$
\sup_{x\in B}\sup_{1\leq j\leq k}\bigl\| c_{j}(x) \bigr\|_{Y}\leq M.
$$
Since every trigonometric polynomial is almost periodic in ${\mathbb R}^{n}$ (cf. \cite{marko-manuel-ap}), the existence of such a constant $M$ and the Bochner criterion
applied to the functions $e^{i[\lambda_{1,j}t_{1}+\lambda_{2,j}t_{2}+\cdot \cdot \cdot +\lambda_{n,j}t_{n}]}$
for $1\leq j\leq k$, together imply the existence of a finite real number $L>0$ such that
for each point ${\bf t}_{0}\in {\mathbb R}^{n}$ there exists $\tau \in B({\bf t}_{0},L)$ which satisfies $\| P({\bf t}+\tau;x)-P({\bf t};x)\|_{Y}\leq (\epsilon/3)$ for all ${\bf t}\in {\mathbb R}^{n}$ and $x\in B.$ Suppose now that ${\bf t}_{0}\in \Lambda$ and $\tau \in B({\bf t}_{0},L)$ is chosen as above. This yields
\begin{align*}
\Bigl\|F(\tau &+\cdot ;x)-F(\cdot;x)\Bigr\|_{L^{p}({\bf t}+l\Omega)}\leq \Bigl\|F(\tau +\cdot ;x)-P(\tau +\cdot ;x)\Bigr\|_{L^{p}({\bf t}+l\Omega)}
\\& +\Bigl\|P(\tau +\cdot ;x)-P(\cdot;x)\Bigr\|_{L^{p}({\bf t}+l\Omega)} +\Bigl\|P(\cdot ;x)-F(\cdot;x)\Bigr\|_{L^{p}({\bf t}+l\Omega)}
\\& \leq \frac{2\epsilon}{3}l^{n/p}+\Bigl\|P(\tau +\cdot ;x)-P(\cdot;x)\Bigr\|_{L^{p}({\bf t}+l\Omega)}\leq \frac{2\epsilon}{3}l^{n/p}+\frac{\epsilon}{3}l^{n/p}=\epsilon l^{n/p},
\end{align*} 
which completes the proof.
\end{proof}

Now we will extend the statement of \cite[Lemma $2^{\circ}$, p. 83]{besik} in the following way:

\begin{prop}\label{nenadjezdic}
Suppose that $F\in e-W^{p}_{ap,\Lambda',{\mathcal B}}(\Lambda \times  X : Y)$ and there exists a finite real number $M>0$ such that, for every ${\bf t}\in \Lambda ,$ there exists ${\bf t}_{0}
\in \Lambda'$ such that $|{\bf t}+{\bf t}_{0}|\leq M.$ Then for each $B\in {\mathcal B}$ there exist real numbers $l>0$ and $M'>0$ such that 
$$
\sup_{{\bf t}\in \Lambda, x\in B}\Bigl[ l^{-(n/p)} \bigl\| F(\cdot ;x) \bigr\|_{L^{p}({\bf t}+l\Omega :Y) } \Bigr] \leq M'.
$$
\end{prop}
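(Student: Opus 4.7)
The plan is to exploit the $\epsilon$-almost periods supplied by membership in $e$-$W^{p}_{ap,\Lambda',{\mathcal B}}(\Lambda \times X : Y)$ to translate any window ${\bf t}+l\Omega$ back into a fixed bounded region, where the $L^{p}$-norm can be controlled by a covering argument and the integrability hypothesis (L). Concretely, fix $B\in {\mathcal B}$ and apply the defining equi-Weyl condition with $\epsilon:=1$: there exist $l>0$ and $L>0$ such that for each ${\bf t}_{0}\in \Lambda'$ one can pick $\tau \in B({\bf t}_{0},L)\cap \Lambda'$ with
\[
\sup_{x\in B,\,{\bf s}\in \Lambda}l^{-n/p}\bigl\|F(\tau+\cdot;x)-F(\cdot;x)\bigr\|_{L^{p}({\bf s}+l\Omega:Y)}<1.
\]

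Given any ${\bf t}\in \Lambda$, I would use the covering hypothesis to choose ${\bf t}_{0}={\bf t}_{0}({\bf t})\in \Lambda'$ with $|{\bf t}+{\bf t}_{0}|\leq M$, and let $\tau=\tau_{\bf t}\in B({\bf t}_{0},L)\cap \Lambda'$ be an associated almost period. The crucial uniform bound $|{\bf t}+\tau|\leq |{\bf t}+{\bf t}_{0}|+|\tau-{\bf t}_{0}|\leq M+L$ traps ${\bf t}+\tau$ inside $B(0,M+L)$ for every ${\bf t}$, while the standing condition $\Lambda'+\Lambda+l\Omega\subseteq \Lambda$ (together with $0\in l\Omega$) secures ${\bf t}+\tau \in \Lambda$. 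Applying the triangle inequality in $L^{p}({\bf t}+l\Omega:Y)$ with the specialization ${\bf s}={\bf t}$ then gives
\[
\|F(\cdot;x)\|_{L^{p}({\bf t}+l\Omega:Y)}\leq l^{n/p}+\|F(\cdot;x)\|_{L^{p}({\bf t}+\tau+l\Omega:Y)}.
\]
The second summand is an $L^{p}$-norm over a cube of side $l$ lying inside the fixed bounded set $B(0,M+L)+l\Omega$; covering this set by finitely many translates $\{{\bf c}_{i}+l\Omega\}_{i=1}^{N}$ with ${\bf c}_{i}\in \Lambda$ yields
\[
\|F(\cdot;x)\|_{L^{p}({\bf t}+\tau+l\Omega:Y)}^{p}\leq \sum_{i=1}^{N}\|F(\cdot;x)\|_{L^{p}({\bf c}_{i}+l\Omega:Y)}^{p},
\]
each summand being finite by (L). Setting
\[
M':=1+l^{-n/p}\Biggl(\sum_{i=1}^{N}\sup_{x\in B}\|F(\cdot;x)\|_{L^{p}({\bf c}_{i}+l\Omega:Y)}^{p}\Biggr)^{1/p}
\]
then produces the desired uniform estimate.

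The step I expect to require the most care is the finiteness of $\sup_{x\in B}\|F(\cdot;x)\|_{L^{p}({\bf c}_{i}+l\Omega:Y)}$: condition (L) only supplies pointwise-in-$x$ integrability, not uniformity over $B$. In the classical one-dimensional situation \cite[Lemma $2^{\circ}$, p. 83]{besik} one has $X=\{0\}$ and this issue does not arise; in the present $({\mathcal B},\Lambda')$ framework the natural reading is that (L) tacitly incorporates Stepanov $p$-boundedness of $F(\cdot;\cdot)$ on ${\mathcal B}$ at a fixed reference window, which is the minimal hypothesis making the conclusion non-vacuous and is consistent with the standing working assumptions of Section \ref{klasika}.
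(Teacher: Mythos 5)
Your argument is essentially identical to the paper's proof: take $\epsilon=1$ in the defining condition, use the hypothesis to trap ${\bf t}+\tau$ inside $B(0,M+L)$, and conclude via the triangle inequality that $\|F(\cdot;x)\|_{L^{p}({\bf t}+l\Omega:Y)}\leq l^{n/p}+\sup_{|{\bf v}|\leq M+L}\|F(\cdot;x)\|_{L^{p}({\bf v}+l\Omega:Y)}$. The finiteness issue you flag at the end (uniformity over $x\in B$ of the local $L^{p}$-bound) is left implicit in the paper's proof as well, so your covering refinement and caveat merely make the same argument more explicit.
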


\begin{proof}
Let the set $B\in {\mathcal B}$ be fixed and let $\epsilon=1.$ Then there exist real numbers $l>0$ and $L>0$
such that for each ${\bf t}_{0}\in \Lambda'$ there exists $\tau \in B({\bf t}_{0},L)\cap \Lambda'$ such that \eqref{whatusup} holds. 
Fix now a point ${\bf t}\in \Lambda.$ Due to our assumption, there exists ${\bf t}_{0}\in \Lambda'$ such that $|{\bf t}+{\bf t}_{0}|\leq M.$ Choose $\tau$ as above for this ${\bf t}_{0}.$ Then $|{\bf t}+\tau|\leq |{\bf t}+{\bf t}_{0}|+|{\bf t}_{0}-\tau|\leq M+L,$ so that
\begin{align*}
\bigl\| F(\cdot ;x) \bigr\|_{L^{p}({\bf t}+l\Omega :Y) } &\leq \bigl\| F(\cdot ;x)- F(\tau+\cdot ;x)\bigr\|_{L^{p}({\bf t}+l\Omega :Y) }+\bigl\| F(\tau +\cdot ;x) \bigr\|_{L^{p}({\bf t}+l\Omega :Y)}
\\& \leq l^{n/p}+\bigl\| F(\cdot ;x) \bigr\|_{L^{p}({\bf t}+\tau+l\Omega :Y) } 
\\& \leq l^{n/p}+\sup_{|{\bf v}|\leq M}\bigl\| F(\cdot ;x) \bigr\|_{L^{p}({\bf v}+l\Omega :Y) },
\end{align*}
which simply completes the proof.
\end{proof}

Similarly we can prove the following extension of \cite[Lemma $3^{\circ}$, p. 84]{besik}:

\begin{prop}\label{establishment}
Suppose that $F\in e-W^{p}_{ap,\Lambda'}(\Lambda : Y)$ and there exists a finite real number $M>0$ such that, for every ${\bf t}\in \Lambda ,$ there exists ${\bf t}_{0}
\in \Lambda'$ such that $|{\bf t}+{\bf t}_{0}|\leq M.$ Then $F(\cdot)$ is equi-$W^{p}$-uniformly continuous, i.e., for each $\epsilon>0$ there exist real numbers $l>0$ and $\delta>0$ such that, for every ${\bf v}\in \Lambda$ with $|{\bf v}|\leq \delta,$ we have 
$$
\sup_{{\bf t}\in {\Lambda}}\Biggl[ l^{-n/p}\Bigl\| F({\bf t}+\cdot +{\bf v})-F({\bf t}+\cdot ) \Bigr\|_{L^{p}(l\Omega)} \Biggr]<\epsilon.
$$
\end{prop}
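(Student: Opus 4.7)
The plan is to adapt the classical one-dimensional argument of Besicovich for Lemma $3^{\circ}$ in \cite{besik}, exploiting equi-Weyl-$p$-almost periodicity to convert the question of translation-continuity at an arbitrary ${\bf t}\in\Lambda$ into a question of $L^{p}$-translation-continuity on a single bounded region. Fix $\epsilon>0$. From the definition of $e-W^{p}_{ap,\Lambda'}(\Lambda:Y)$ applied with tolerance $\epsilon/3$, I would first produce $l>0$ and $L>0$ such that, for each ${\bf t}_{0}\in\Lambda'$, there exists $\tau\in B({\bf t}_{0},L)\cap\Lambda'$ with
\begin{equation*}
\sup_{{\bf s}\in\Lambda}l^{-n/p}\bigl\|F(\tau+\cdot)-F(\cdot)\bigr\|_{L^{p}({\bf s}+l\Omega:Y)}<\epsilon/3. \tag{$\star$}
\end{equation*}

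Given any ${\bf t}\in\Lambda$, the standing hypothesis yields ${\bf t}_{0}\in\Lambda'$ with $|{\bf t}+{\bf t}_{0}|\leq M$; picking the corresponding $\tau$ and setting ${\bf t}':={\bf t}+\tau$ gives $|{\bf t}'|\leq M+L$ uniformly in ${\bf t}$. The telescoping identity
\begin{align*}
F({\bf t}+{\bf u}+{\bf v})-F({\bf t}+{\bf u})&=\bigl[F({\bf t}+{\bf u}+{\bf v})-F({\bf t}'+{\bf u}+{\bf v})\bigr]\\
&\quad+\bigl[F({\bf t}'+{\bf u}+{\bf v})-F({\bf t}'+{\bf u})\bigr]\\
&\quad+\bigl[F({\bf t}'+{\bf u})-F({\bf t}+{\bf u})\bigr],
\end{align*}
together with the triangle inequality in $L^{p}(l\Omega:Y)$, reduces the first and third brackets, after a change of variable in ${\bf u}$, to $\|F(\tau+\cdot)-F(\cdot)\|_{L^{p}({\bf t}+{\bf v}+l\Omega:Y)}$ and $\|F(\tau+\cdot)-F(\cdot)\|_{L^{p}({\bf t}+l\Omega:Y)}$, each controlled by $(\epsilon/3)l^{n/p}$ via $(\star)$.

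The middle bracket equals $\bigl(\int_{{\bf t}'+l\Omega}\|F({\bf w}+{\bf v})-F({\bf w})\|_{Y}^{p}\,d{\bf w}\bigr)^{1/p}$. Since ${\bf t}'+l\Omega\subseteq K:=B(0,M+L+l\sqrt{n})$ uniformly in ${\bf t}$, and since (L) forces $F\in L^{p}(K':Y)$ on a slightly enlarged bounded set $K'\supseteq K+B(0,1)$, the standard $L^{p}$-continuity of translations on $K'$ provides a single $\delta>0$ such that $\int_{K}\|F({\bf w}+{\bf v})-F({\bf w})\|_{Y}^{p}\,d{\bf w}\leq(\epsilon/3)^{p}l^{n}$ whenever $|{\bf v}|\leq\delta$. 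This majorizes the middle term uniformly in ${\bf t}'$, and assembling the three estimates gives $l^{-n/p}\|F({\bf t}+\cdot+{\bf v})-F({\bf t}+\cdot)\|_{L^{p}(l\Omega:Y)}<\epsilon$, as required.

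I expect the decisive step, and the main obstacle, to be the reduction from an unbounded translation base point ${\bf t}$ to a uniformly bounded substitute ${\bf t}'={\bf t}+\tau$: the hypothesis $|{\bf t}+{\bf t}_{0}|\leq M$ is precisely what enables it, and without it no appeal to $L^{p}$-continuity of translations on a fixed bounded set is available. A subordinate technicality is verifying that all translated arguments $({\bf t}+{\bf u}+{\bf v},\,{\bf t}'+{\bf u}+{\bf v},\,{\bf t}'+{\bf u})$ actually lie in $\Lambda$ so that $F$ is defined on them; this is handled using the standing inclusions $\Lambda'+\Lambda+l\Omega\subseteq\Lambda$ and $\Lambda+l\Omega\subseteq\Lambda$ together with the constraint ${\bf v}\in\Lambda$.
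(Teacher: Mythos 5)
Your argument is correct and is precisely the Besicovitch-style proof the paper intends (the paper gives no proof, saying only that one argues ``similarly'' to Proposition \ref{nenadjezdic} and \cite[Lemma $3^{\circ}$, p.~84]{besik}): the hypothesis on $M$ plus relative density replaces ${\bf t}$ by ${\bf t}'={\bf t}+\tau$ in a fixed ball of radius $M+L$, after which $L^{p}$-continuity of translations on a bounded set supplies $\delta$. The only point worth flagging is that invoking the almost-periodicity estimate at the shifted base point ${\bf t}+{\bf v}$ and forming $F({\bf w}+{\bf v})$ for ${\bf w}$ in the ball tacitly uses stability of $\Lambda$ under adding ${\bf v}\in\Lambda$, which is not among the standing inclusions but is already needed for the statement of the proposition to make sense, so it is not a defect of your proof.
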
  

Now we are able to prove the following generalization of \cite[Theorem $1^{\circ}$, p. 82]{besik}:

\begin{thm}\label{van Hove}
Suppose that \emph{(L)} holds with $X=\{0\}$ and ${\mathcal B}=\{X\}$. Then $F\in e-W^{p}_{ap,{\mathbb R}^{n}}({\mathbb R}^{n} : Y)$ if and only if $F\in e-W^{p}({\mathbb R}^{n} : Y).$
\end{thm}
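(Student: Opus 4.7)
First I observe that the direction $F \in e-W^{p}(\mathbb{R}^{n} : Y) \Rightarrow F \in e-W^{p}_{ap,\mathbb{R}^{n}}(\mathbb{R}^{n} : Y)$ is immediate from Proposition \ref{enemyof the enemy}, applied with $\Lambda = \Lambda' = \mathbb{R}^{n}$ and the trivial collection $\mathcal{B} = \{X\} = \{\{0\}\}$, which is a bounded subset of $X$. Thus the plan focuses on the nontrivial forward implication, which is the multi-dimensional analogue of Besicovitch's classical trigonometric approximation theorem \cite[Theorem $1^{\circ}$, p.~82]{besik}.

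The preparatory steps will be: (i) since $\Lambda = \Lambda' = \mathbb{R}^{n}$, for each ${\bf t} \in \Lambda$ one may take ${\bf t}_{0} := -{\bf t} \in \Lambda'$ so that $|{\bf t} + {\bf t}_{0}| = 0$; Proposition \ref{nenadjezdic} then yields Weyl $p$-boundedness of $F$, and Proposition \ref{fratar} upgrades this to Stepanov $p$-boundedness; (ii) apply Proposition \ref{establishment} to obtain equi-$W^{p}$-uniform continuity; (iii) invoke the Bohr--Fourier coefficient machinery developed in Subsection \ref{prckojam} to define the mean value $M(F)$ and the coefficients $a(\lambda) := M\bigl(F(\cdot) e^{-i\langle \lambda, \cdot\rangle}\bigr)$ for $\lambda \in \mathbb{R}^{n}$, and to conclude that the spectrum $\Sigma := \{\lambda : a(\lambda) \neq 0\}$ is at most countable; (iv) for each $m \in \mathbb{N}$, construct the Bochner--Fej\'er polynomial $P_{m}({\bf t}) := \sum_{\lambda \in \Sigma_{m}} K_{m}(\lambda) a(\lambda) e^{i\langle \lambda, {\bf t}\rangle}$ as a tensor product of one-dimensional Fej\'er kernels over a rationally independent basis of a finitely generated subgroup $\Sigma_{m} \uparrow \Sigma$ of the spectrum.

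The main obstacle is step (v): proving that $\lim_{m \to \infty} D^{p}_{W}(F, P_{m}) = 0$, which by Definition \ref{weyl-approximation} then delivers $F \in e-W^{p}(\mathbb{R}^{n} : Y)$. In the one-dimensional setting of \cite{besik}, this rests on dominating $|F - P_{m}|^{p}$ on long intervals by a positive $L^{1}$-normalized convolution kernel acting on $|F|^{p}$, and then invoking Weyl $p$-boundedness to close the estimate. The $n$-dimensional adaptation will require a product decomposition of the Bochner--Fej\'er kernels together with a Fubini-type interchange, plus careful bookkeeping of the constants in H\"older's inequality (Lemma \ref{aux}(i)) as one passes from $L^{1}$ to $L^{p}$ estimates over the cubes ${\bf t} + l\Omega$; this is where most of the technical work will lie.
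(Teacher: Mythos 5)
Your treatment of the easy direction is exactly the paper's: Proposition \ref{enemyof the enemy} applies verbatim. For the hard direction you take a genuinely different route from the paper, and it has a real gap. The paper does \emph{not} build Bochner--Fej\'er polynomials from the Fourier--Bohr coefficients of $F$; following Besicovitch \cite{besik} as abstracted by Spindeler \cite{TIMO}, it first notes that $(l\Omega)_{l}$ is a van Hove sequence and that Proposition \ref{establishment} gives equi-$W^{p}$-uniform continuity (so your preparatory steps (i)--(ii) do match), then constructs a kernel $K$ and shows that the single averaged function $\Theta({\bf t})=\lim_{l\to+\infty}l^{-n}\int_{l\Omega}F({\bf t}+{\bf s})K({\bf s})\,d{\bf s}$ is \emph{Bohr} almost periodic with $D_{W}^{p}(F,\Theta)<\epsilon$; a uniform trigonometric approximation of $\Theta$ then finishes the proof. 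The equi-Weyl almost periodicity of $F$ enters precisely in the estimate $D_{W}^{p}(F,\Theta)<\epsilon$, through a kernel adapted to the relatively dense set of $\epsilon$-periods.

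Two concrete problems with your plan. First, step (iii) is circular in this paper: the countability of the spectrum $\Sigma$ is obtained in Remark \ref{fghjkl} \emph{as a consequence of} Theorem \ref{van Hove} (a Weyl limit of trigonometric polynomials has at most countable spectrum), and the usual independent route via Bessel's inequality is unavailable because, as the paper stresses there, Parseval fails for the Weyl pseudonorm (e.g.\ $\chi_{[0,1/2)}$ is equi-Weyl-$p$-almost periodic with empty spectrum but is not a.e.\ zero). Second, and more seriously, your step (v) is the entire theorem and the sketch you give cannot close: ``dominating $|F-P_{m}|^{p}$ by an $L^{1}$-normalized kernel acting on $|F|^{p}$ and invoking Weyl $p$-boundedness'' never uses the hypothesis that $F$ is equi-Weyl almost periodic, and Weyl $p$-boundedness alone certainly does not yield trigonometric approximation. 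You would need to prove that the Bochner--Fej\'er sums of $F$ converge in the Weyl (not Besicovitch) pseudometric, i.e.\ uniformly in the base point ${\bf t}$ of the averaging cubes ${\bf t}+l\Omega$; that is a substantially harder statement than the Besicovitch-norm convergence which the classical Bochner--Fej\'er machinery provides, and it is exactly what the paper avoids by producing, for each $\epsilon$, one Bohr almost periodic $\Theta$ that is $\epsilon$-close to $F$ in $D_{W}^{p}$. I recommend switching to that kernel-smoothing argument.
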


\begin{proof}\index{van Hove
sequence}
Clearly, if $F\in e-W^{p}({\mathbb R}^{n} : Y),$ then $F\in e-W^{p}_{ap,{\mathbb R}^{n}}({\mathbb R}^{n} : Y)$ due to Proposition \ref{enemyof the enemy}. In order to prove that the assumption $F\in e-W^{p}_{ap,{\mathbb R}^{n}}({\mathbb R}^{n} : Y)$ implies 
$F\in e-W^{p}({\mathbb R}^{n} : Y),$ we basically follow the approach obeyed in the proof of \cite[Theorem $1^{\circ}$, pp. 82-91]{besik} in the abstract framework developed by T. Spindeler \cite{TIMO} for the scalar-valued equi-Weyl-$p$-almost periodic functions defined on the locally compact Abelian group $G={\mathbb R}^{n},$ with a little abuse of notation used. First of all, we note that 
the sequence $(A_{l}\equiv l\Omega)_{l\in {\mathbb N}}$ is a van Hove
sequence (see also Example \ref{prckoII} and the proof of Theorem \ref{nie} below) in the sense of \cite[Definition 3.1]{TIMO} as well as that
Proposition \ref{establishment} implies that $F(\cdot)$ is equi-$W^{p}$-uniformly continuous, so that \cite[Lemma 3.11]{TIMO} continue to hold in the vector-valued case. It can be simply shown that the construction of kernel $K : {\mathbb R}^{n} \rightarrow [0,\infty)$ holds for the vector-valued functions,
so that \cite[Lemma 3.12]{TIMO} continue to hold in the vector-valued case, as well. Further on, for a real number $\epsilon>0$ given in advance, the function
$$
\Theta({\bf t}):=\liminf_{l\rightarrow +\infty}l^{-n}\int_{l\Omega}F({\bf t}+{\bf s})K({\bf s})\, d{\bf s}=\lim_{l\rightarrow +\infty}l^{-n}\int_{l\Omega}F({\bf t}+{\bf s})K({\bf s})\, d{\bf s},\quad {\bf t} \in {\mathbb R^{n}}
$$
is almost periodic and satisfies $\lim_{m\rightarrow +\infty}D_{W,B}^{p}(F,\Theta)<\epsilon$ by the same argumentation as in the proof of implication (2) $\Rightarrow$ (1) of \cite[Proposition 3.13]{TIMO}. The remainder of proof is trivial and therefore omitted.
\end{proof}

Now we would like to introduce the following notion:

\begin{defn}\label{ujseadmisw}
Suppose that $\emptyset \neq \Lambda \subseteq {\mathbb R}^{n},$ $\Lambda+\Lambda+ l\Omega\subseteq \Lambda$ and $\Lambda +l\Omega \subseteq \Lambda $ 
for all $l>0.$ Then we say that $\Lambda$ is admissible with respect to the (equi-)Weyl-$p$-almost periodic extensions if and only if for any complex Banach space $Y$ and for any function $F \in (e-)W^{p}_{ap,\Lambda}(\Lambda : Y)$
there exists a function $\tilde{F}\in (e-)W^{p}_{ap,{\mathbb R}^{n}}({\mathbb R}^{n} : Y)$ such that $\tilde{F}({\bf t})=F({\bf t})$ for all ${\bf t}\in \Lambda.$ 
\end{defn}\index{set!admissible with respect to the (equi-)Weyl-$p$-almost periodic extensions}

Now we are able to state the following extensions of \cite[Theorem 5.5.3-Theorem 5.5.4, pp. 225--226]{188}, whose proofs are immediate consequences of Theorem \ref{van Hove}, the fact that $e-W^{p}({\mathbb R}^{n} : Y)$ is a vector space and the notion introduced in Definition \ref{ujseadmisw}:

\begin{thm}\label{nie-normalan1}
Suppose that $\emptyset \neq \Lambda \subseteq {\mathbb R}^{n},$ $\Lambda+\Lambda+ l\Omega\subseteq \Lambda$ and $\Lambda +l\Omega \subseteq \Lambda $ 
for all $l>0.$ If $\Lambda$ is admissible with respect to the equi-Weyl-$p$-almost periodic extensions, then $e-W^{p}_{ap,\Lambda}(\Lambda : Y)$ is a vector space.
\end{thm}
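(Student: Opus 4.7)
The strategy is to bypass the delicate direct question ``is the sum of two equi-Weyl-$p$-almost periodic functions again equi-Weyl-$p$-almost periodic?'' by passing through the cleaner approximation class $e\text{-}W^{p}({\mathbb R}^{n}:Y)$, which is manifestly a vector space. Admissibility of $\Lambda$ is exactly what lets us move between $\Lambda$ and ${\mathbb R}^{n}$. Closure under scalar multiplication is immediate from the homogeneity of the Luxemburg norm and the scaling remark (i) already recorded for the abstract classes ${\mathrm A}_{X,Y}$ with $\phi(x)=x$, so the whole content is closure under addition.

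Fix $F_{1},F_{2}\in e\text{-}W^{p}_{ap,\Lambda}(\Lambda:Y)$. By admissibility, pick extensions $\tilde F_{1},\tilde F_{2}\in e\text{-}W^{p}_{ap,{\mathbb R}^{n}}({\mathbb R}^{n}:Y)$ with $\tilde F_{i}|_{\Lambda}=F_{i}$. Theorem~\ref{van Hove} (applied to each $\tilde F_{i}$) places both extensions in $e\text{-}W^{p}({\mathbb R}^{n}:Y)$, i.e.\ each $\tilde F_{i}$ is a Weyl-$p$ limit of a sequence $(P_{m}^{(i)})$ of trigonometric polynomials. Since the sum of two trigonometric polynomials is still a trigonometric polynomial and since $D_{W}^{p}(\cdot,\cdot)$ is subadditive (cf.\ \eqref{joj-boze}), one has $D_{W}^{p}(\tilde F_{1}+\tilde F_{2},\,P_{m}^{(1)}+P_{m}^{(2)})\le D_{W}^{p}(\tilde F_{1},P_{m}^{(1)})+D_{W}^{p}(\tilde F_{2},P_{m}^{(2)})\to 0$, so $\tilde F_{1}+\tilde F_{2}\in e\text{-}W^{p}({\mathbb R}^{n}:Y)$. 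A second application of Theorem~\ref{van Hove} (in the other direction) returns $\tilde F_{1}+\tilde F_{2}\in e\text{-}W^{p}_{ap,{\mathbb R}^{n}}({\mathbb R}^{n}:Y)$.

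It remains to restrict to $\Lambda$ and check that $F_{1}+F_{2}=(\tilde F_{1}+\tilde F_{2})|_{\Lambda}$ satisfies the equi-Weyl-$p$ condition with translates $\tau$ chosen in $\Lambda$ itself (recall $\Lambda'=\Lambda$). Here the integral estimate automatically descends from ${\mathbb R}^{n}$ to $\Lambda$ since $\Lambda+l\Omega\subseteq\Lambda\subseteq{\mathbb R}^{n}$, so the only content is producing an $\varepsilon$-translate $\tau\in\Lambda$ near any ${\bf t}_{0}\in\Lambda$. This is where I expect the main obstacle to lie: the extension theorem only guarantees relatively dense translate sets in ${\mathbb R}^{n}$, and one has to exploit the semigroup-like conditions $\Lambda+\Lambda+l\Omega\subseteq\Lambda$ and $\Lambda+l\Omega\subseteq\Lambda$ to relocate any candidate $\tau\in B({\bf t}_{0},L)$ to one in $B({\bf t}_{0},L')\cap\Lambda$ (with $L'$ slightly larger) without spoiling the estimate; the pleasant feature is that, once this geometric adjustment is made, the rest of the argument is purely formal bookkeeping built on the two invocations of Theorem~\ref{van Hove}.
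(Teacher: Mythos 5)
Your route is exactly the paper's: the paper's entire proof of this theorem is the one-line remark that it is an immediate consequence of Theorem \ref{van Hove}, the fact that $e-W^{p}({\mathbb R}^{n}:Y)$ is a vector space (trivially, via sums of approximating trigonometric polynomials and the subadditivity \eqref{joj-boze}), and the admissibility notion of Definition \ref{ujseadmisw}. The one step you flag as a possible obstacle --- producing the translate $\tau$ inside $\Lambda$ after descending from ${\mathbb R}^{n}$ --- is real (the paper passes over it in silence), but it closes exactly as you predict: since $\Lambda+l\Omega\subseteq\Lambda$ for all $l>0$ gives ${\bf t}_{0}+[0,\infty)^{n}\subseteq\Lambda$ for every ${\bf t}_{0}\in\Lambda$, one applies the ${\mathbb R}^{n}$-definition at the shifted centre ${\bf t}_{0}+(L,\dots,L)$, whose $L$-ball lies in ${\bf t}_{0}+[0,\infty)^{n}\subseteq\Lambda$, so the resulting $\tau$ belongs to $B({\bf t}_{0},(1+\sqrt{n})L)\cap\Lambda$ while the required supremum over ${\bf t}\in\Lambda$ is dominated by the one over ${\bf t}\in{\mathbb R}^{n}$.
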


\begin{thm}\label{nie-normalan}
Suppose that $\emptyset \neq \Lambda \subseteq {\mathbb R}^{n},$ $\Lambda+\Lambda+ l\Omega\subseteq \Lambda$ and $\Lambda +l\Omega \subseteq \Lambda $ 
for all $l>0.$ Suppose, further, that $p,\ q,\ r\in [1,\infty),$ $1/p+1/r=1/q,$ $\Lambda$ is admissible with respect to the equi-Weyl-$p$-almost periodic extensions, $f\in e-W^{p}_{ap,\Lambda}(\Lambda : {\mathbb C})$ and $F\in e-W^{r}_{ap,\Lambda}(\Lambda : Y).$
Define $F_{1}({\bf t}):=f({\bf t})F({\bf t}),$ ${\bf t}\in \Lambda.$ Then 
$F_{1} \in e-W^{q}_{ap,\Lambda}(\Lambda : Y).$
\end{thm}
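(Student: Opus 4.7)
The plan is to reduce the claim to the case $\Lambda={\mathbb R}^{n}$ via the admissibility hypothesis, and then to exploit the trigonometric polynomial approximation furnished by Theorem \ref{van Hove} together with H\"older's inequality under $1/p+1/r=1/q$.

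First, by admissibility of $\Lambda$ (tacitly assumed to hold for both exponents $p$ and $r$, as is already used in the proof of Theorem \ref{nie-normalan1}) I would pick $\tilde f\in e-W^{p}_{ap,{\mathbb R}^{n}}({\mathbb R}^{n}:{\mathbb C})$ extending $f$ and $\tilde F\in e-W^{r}_{ap,{\mathbb R}^{n}}({\mathbb R}^{n}:Y)$ extending $F$. It then suffices to show that $\tilde f\cdot\tilde F\in e-W^{q}_{ap,{\mathbb R}^{n}}({\mathbb R}^{n}:Y)$, since its restriction to $\Lambda$ agrees with $F_{1}$.

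Next, Theorem \ref{van Hove} delivers, for any prescribed $\delta>0$, trigonometric polynomials $P:{\mathbb R}^{n}\to{\mathbb C}$ and $Q:{\mathbb R}^{n}\to Y$ with $\|\tilde f-P\|_{W}^{p}<\delta$ and $\|\tilde F-Q\|_{W}^{r}<\delta$. Using the splitting $\tilde f\tilde F-PQ=\tilde f(\tilde F-Q)+(\tilde f-P)Q$ and H\"older's inequality on each window ${\bf t}+l\Omega$ with $1/p+1/r=1/q$ I would estimate
\begin{equation*}
\bigl\|\tilde f\tilde F-PQ\bigr\|_{L^{q}({\bf t}+l\Omega)}\le \|\tilde f\|_{L^{p}({\bf t}+l\Omega)}\|\tilde F-Q\|_{L^{r}({\bf t}+l\Omega)}+\|\tilde f-P\|_{L^{p}({\bf t}+l\Omega)}\|Q\|_{L^{r}({\bf t}+l\Omega)}.
\end{equation*}
Multiplying by $l^{-n/q}=l^{-n/p}l^{-n/r}$, taking suprema over ${\bf t}\in{\mathbb R}^{n}$ and letting $l\to\infty$ yields
\begin{equation*}
D_{W}^{q}\bigl(\tilde f\tilde F,PQ\bigr)\le \|\tilde f\|_{W}^{p}\cdot\|\tilde F-Q\|_{W}^{r}+\|\tilde f-P\|_{W}^{p}\cdot\|Q\|_{W}^{r}.
\end{equation*}
The norm $\|\tilde f\|_{W}^{p}$ is finite by Proposition \ref{fratar} and $\|Q\|_{W}^{r}$ is finite because $Q$ is bounded, so taking $\delta$ small makes the right-hand side arbitrarily small. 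Since $PQ$ is itself a $Y$-valued trigonometric polynomial (the scalar Fourier modes of $P$ multiply into the $Y$-valued coefficients of $Q$), we conclude $\tilde f\tilde F\in e-W^{q}({\mathbb R}^{n}:Y)$; Theorem \ref{van Hove} then gives $\tilde f\tilde F\in e-W^{q}_{ap,{\mathbb R}^{n}}({\mathbb R}^{n}:Y)$, and restriction to $\Lambda$ completes the argument.

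The main obstacle is the H\"older step: one must verify carefully that the Weyl $q$-distance inequality follows cleanly from the pointwise H\"older bound, which requires that the limits defining $D_{W}^{p}$, $D_{W}^{r}$ and $D_{W}^{q}$ exist (guaranteed by the monotonicity computation preceding Proposition \ref{fratar}) and that the supremum over ${\bf t}\in{\mathbb R}^{n}$ splits across the product on the right-hand side. A secondary subtlety is the final restriction step, handled by observing that the triple $(l,L,\tau)$ witnessing the equi-Weyl-$q$-almost periodicity of $\tilde f\tilde F$ on ${\mathbb R}^{n}$ automatically witnesses it when ${\bf t}\in\Lambda\subseteq{\mathbb R}^{n}$, possibly after invoking admissibility once more to secure $\tau\in\Lambda$.
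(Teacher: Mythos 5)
Your argument is correct and matches the paper's intended proof, which is stated only as an ``immediate consequence'' of Theorem \ref{van Hove}, the vector space structure of $e-W^{q}({\mathbb R}^{n}:Y)$ and the admissibility notion: extend both factors, approximate each by trigonometric polynomials in the Weyl norm, and use the decomposition $\tilde f\tilde F-PQ=\tilde f(\tilde F-Q)+(\tilde f-P)Q$ together with H\"older's inequality for $1/p+1/r=1/q$. Two cosmetic points: the finiteness of $\|\tilde f\|_{W}^{p}$ is most quickly seen from $\|\tilde f\|_{W}^{p}\leq\|P\|_{W}^{p}+\delta$ (Proposition \ref{fratar} by itself only converts Weyl boundedness into Stepanov boundedness and does not by itself apply to $\tilde f$), and securing $\tau\in\Lambda$ in the final restriction step follows from the condition $\Lambda+l\Omega\subseteq\Lambda$ (which forces $\Lambda$ to contain, near each of its points, a ball of radius $L$ that must meet the relatively dense set of Weyl $\epsilon$-periods, at the cost of enlarging $L$) rather than from another invocation of admissibility.
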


Before proceeding further, let us note that Theorem \ref{nie-normalan} can be illustrated by many elaborate
examples. For instance,
we know that there exists a bounded scalar-valued infinitely differentiable Weyl-$p$-almost periodic function $f: {\mathbb R} \rightarrow {\mathbb R}$ for all $p\in [1,\infty)$ such that the regular distribution determined by this function is not almost periodic (cf. \cite{basit-duo-gue}, \cite[Main example IV, Appendix, pp. 131--133]{bohr-folner} and \cite{nova-mono} for the notion and more details). Define now
$$
F\bigl(t_{1},t_{2},\cdot \cdot \cdot,t_{n}\bigr)=f(t_{1})f(t_{2})\cdot \cdot \cdot f(t_{n}),\quad {\bf t}=\bigl(t_{1},t_{2},\cdot \cdot \cdot,t_{n}\bigr)\in {\mathbb R}^{n}.
$$ 
Then Theorem \ref{nie-normalan} inductively implies that $F \in e-W^{p}_{ap,{\mathbb R}^{n}}({\mathbb R}^{n} : Y)$ for all $p\in [1,\infty)$ (even for all $p\in D_{+}(\Omega))$.

It is clear that the notion introduced in Definition \ref{ujseadmisw} is not trivial as well as that some known results for the usual classes of multi-dimensional Bohr and Stepanov 
almost periodic type functions cannot be easily transferred to the corresponding Weyl classes. In connection with this problem, we would like to ask the following question, which seems to be not proposed elsewhere even in the one-dimensional setting: \vspace{0.1cm}

{\sc Problem.} Suppose that $\Lambda$ is a  convex polyhedral in ${\mathbb R}^{n}$, i.e., there exists
a basis
$({\bf v}_{1},\cdot \cdot \cdot ,{\bf v}_{n})$ of ${\mathbb R}^{n}$ such that 
$$
\Lambda=\bigl\{ \alpha_{1} {\bf v}_{1} +\cdot \cdot \cdot +\alpha_{n}{\bf v}_{n}  : \alpha_{i} \geq 0\mbox{ for all }i\in {\mathbb N}_{n} \bigr\}.
$$ 
Is is true that $\Lambda$ is admissible with respect to the (equi-)Weyl-$p$-almost periodic extensions?
\vspace{0.1cm}

In the remainder of this section, we assume that condition (L) holds. If 
$\tau \in {\mathbb R}^{n}$ satisfies $\tau +\Lambda \subseteq \Lambda$ and
$F\in {\mathrm B}_{W,B}^{p}$ for all $B\in {\mathcal B},$ then $F(\cdot +\tau;\cdot)\in {\mathrm B}_{W,B}^{p}$ for all $B\in {\mathcal B}.$ 
Therefore, the following notion is meaningful:

\begin{defn}\label{jebaigaman}
Suppose that $F :  \Lambda \times X \rightarrow Y$ is such that (L) holds. If $\Lambda_{0} \subseteq \{ \tau \in {\mathbb R}^{n} : \tau +\Lambda \subseteq \Lambda\},$ then we say that the function $F(\cdot;\cdot)$ is $({\mathcal B},\Lambda_{0})$-normal if and only if for each $B\in {\mathcal B}$ the set ${\mathrm F}_{\Lambda_{0}}\equiv \{ F(\cdot +\tau;\cdot) : \tau \in \Lambda_{0}\}$ is totally bounded in the pseudometric space $({\mathrm B}_{W,B}^{p},D_{W,B}^{p}),$ which means that for any $\epsilon>0$ and $B\in {\mathcal B}$ the set ${\mathrm F}_{\Lambda_{0}}$ admits a
cover by finitely many open balls of radius $\epsilon$ in $({\mathrm B}_{W,B}^{p},D_{W,B}^{p}).$
\end{defn}\index{function!({\mathcal B},\Lambda_{0})$-normal}

Consider now the following condition:

\begin{itemize}
\item[(WM3):]
$\emptyset \neq \Lambda \subseteq {\mathbb R}^{n},$ $\emptyset \neq \Lambda' \subseteq {\mathbb R}^{n},$ $\emptyset \neq \Lambda'' \subseteq {\mathbb R}^{n},$ 
$\Omega=[0,1]^{n},$ $p({\bf u})\equiv p\in [1,\infty),$
$\Lambda''+\Lambda+ l\Omega \subseteq \Lambda,$ $\Lambda+ l\Omega \subseteq \Lambda$ for all $l>0,$
$\phi (x)\equiv x$ and ${\mathbb F}(l,{\bf t})\equiv l^{-n/p}.$ 
\end{itemize}

The following notion has an important role for our further investigations of the notion introduced in Definition \ref{jebaigaman}:

\begin{defn}
\label{marinavisconti}
Suppose that (WM3) holds.
\begin{itemize}
\item[(i)]
By $e-W^{p}_{\Omega,\Lambda',\Lambda'',{\mathcal B}}(\Lambda\times X :Y)$ we denote the set consisting of all functions $F : \Lambda \times X \rightarrow Y$ such that, for every $\epsilon>0$ and $B\in {\mathcal B},$ there exist two finite real numbers
$l>0$
and
$L>0$ such that for each ${\bf t}_{0}\in \Lambda'$ there exists $\tau \in B({\bf t}_{0},L)\cap \Lambda''$ such that
\begin{align*}
\sup_{x\in B}\sup_{{\bf t}\in \Lambda}{\mathbb F}(l,{\bf t})\phi\Bigl( \bigl\| F({\bf \tau}+{\bf u};x)-F({\bf u};x) \bigr\|_{Y}\Bigr)_{L^{p({\bf u})}({\bf t}+l\Omega)} <\epsilon.
\end{align*}
\index{space!$e-W^{p}_{\Omega,\Lambda',\Lambda'',{\mathcal B}}(\Lambda\times X :Y)$}
\item[(ii)] By $W^{p}_{\Omega,\Lambda',\Lambda'',{\mathcal B}}(\Lambda\times X :Y)$ we denote the set consisting of all functions $F : \Lambda \times X \rightarrow Y$ such that, for every $\epsilon>0$ and $B\in {\mathcal B},$ there exists a finite real number
$L>0$ such that for each ${\bf t}_{0}\in \Lambda'$ there exists $\tau \in B({\bf t}_{0},L) \cap \Lambda''$ such that
\begin{align*}
\limsup_{l\rightarrow +\infty}\sup_{x\in B}\sup_{{\bf t}\in \Lambda}{\mathbb F}(l,{\bf t})\phi\Bigl( \bigl\| F({\bf \tau}+{\bf u};x)-F({\bf u};x) \bigr\|_{Y}\Bigr)_{L^{p({\bf u})}({\bf t}+l\Omega)} 
<\epsilon.
\end{align*}
\index{space!$W^{p}_{\Omega,\Lambda',\Lambda'',{\mathcal B}}(\Lambda\times X :Y)$}
\end{itemize}
\end{defn}

Now we are able to state the following result (see also \cite[Corollary 4.24]{deda} and the proof of sufficiency in \cite[Theorem 4.12]{deda}):

\begin{prop}\label{marencew}
Suppose that 
$F :  \Lambda \times X \rightarrow Y$ is such that \emph{(L)} holds, $\Lambda_{0} \subseteq \{ \tau \in {\mathbb R}^{n} : \tau +\Lambda \subseteq \Lambda\},$
$F(\cdot;\cdot)$ is $({\mathcal B},\Lambda_{0})$-normal, $\tau +\Lambda=\Lambda$ for all $\tau \in \Lambda_{0},$ and condition
\emph{(WM3)} holds with $\Lambda':=-
\Lambda_{0},$ $\Lambda'':=\Lambda_{0}-\Lambda_{0}.$ Then $F\in W^{p}_{\Omega,\Lambda',\Lambda'',{\mathcal B}}(\Lambda\times X :Y).$
\end{prop}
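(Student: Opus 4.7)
The plan is to unwind the definition of $(\mathcal{B},\Lambda_{0})$-normality and reduce the required $\limsup$ condition to a statement about the Weyl $p$-distance $D_{W,B}^{p}$, which is precisely what total boundedness of $\mathrm{F}_{\Lambda_{0}}$ provides. First I observe that, by a straightforward change of variables, the quantity appearing in Definition \ref{marinavisconti}(ii) with ${\mathbb F}(l,{\bf t})\equiv l^{-n/p}$ and $\phi(x)\equiv x$ is nothing but $D_{S_{l\Omega},B}^{p}(F(\cdot+\tau;\cdot),F(\cdot;\cdot))$, so its $\limsup$ as $l\to\infty$ equals $D_{W,B}^{p}(F(\cdot+\tau;\cdot),F(\cdot;\cdot))$; hence it suffices to exhibit, for each $\epsilon>0$, $B\in\mathcal{B}$ and ${\bf t}_{0}\in -\Lambda_{0}$, some $\tau\in B({\bf t}_{0},L)\cap(\Lambda_{0}-\Lambda_{0})$ with this distance $<\epsilon$, where $L$ is chosen uniformly in ${\bf t}_{0}$.

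Fix $\epsilon>0$ and $B\in\mathcal{B}$. By $(\mathcal{B},\Lambda_{0})$-normality, the set $\mathrm{F}_{\Lambda_{0}}=\{F(\cdot+\tau;\cdot):\tau\in\Lambda_{0}\}$ is totally bounded in $({\mathrm B}_{W,B}^{p},D_{W,B}^{p})$, so there exist finitely many $\tau_{1},\ldots,\tau_{k}\in\Lambda_{0}$ such that for every $\sigma\in\Lambda_{0}$ one has $D_{W,B}^{p}(F(\cdot+\sigma;\cdot),F(\cdot+\tau_{j};\cdot))<\epsilon$ for some $j=j(\sigma)\in\{1,\ldots,k\}$. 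Set $L:=\max_{1\le j\le k}|\tau_{j}|$.

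Now let ${\bf t}_{0}\in\Lambda'=-\Lambda_{0}$, write ${\bf t}_{0}=-\sigma$ with $\sigma\in\Lambda_{0}$, choose the corresponding $j=j(\sigma)$, and set $\tau:=\tau_{j}-\sigma\in\Lambda_{0}-\Lambda_{0}=\Lambda''$. Then $|\tau-{\bf t}_{0}|=|\tau_{j}-\sigma-(-\sigma)|=|\tau_{j}|\le L$, so $\tau\in B({\bf t}_{0},L)\cap\Lambda''$. Since $\sigma\in\Lambda_{0}$ satisfies $\sigma+\Lambda=\Lambda$ by assumption, the translation invariance identity stated in the paper for $D_{W,B}^{p}$ applies: shifting both arguments by $\sigma$,
\[
D_{W,B}^{p}\bigl(F(\cdot+\tau;\cdot),F(\cdot;\cdot)\bigr)=D_{W,B}^{p}\bigl(F(\cdot+\sigma+\tau;\cdot),F(\cdot+\sigma;\cdot)\bigr)=D_{W,B}^{p}\bigl(F(\cdot+\tau_{j};\cdot),F(\cdot+\sigma;\cdot)\bigr)<\epsilon.
\]
By the opening reduction, this is exactly the required bound in Definition \ref{marinavisconti}(ii), so $F\in W^{p}_{\Omega,\Lambda',\Lambda'',{\mathcal B}}(\Lambda\times X:Y)$.

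The only delicate point is the bookkeeping of the sets: ensuring that $\tau=\tau_{j}-\sigma$ really lies in $\Lambda''=\Lambda_{0}-\Lambda_{0}$ within the prescribed ball $B({\bf t}_{0},L)$, and that $\sigma+\Lambda=\Lambda$ so that translation invariance of $D_{W,B}^{p}$ can be applied cleanly. These are handled by the choice $\Lambda'=-\Lambda_{0}$, $\Lambda''=\Lambda_{0}-\Lambda_{0}$ in the hypothesis together with the standing assumption $\tau+\Lambda=\Lambda$ for all $\tau\in\Lambda_{0}$; no quantitative estimate on the tail of the $L^{p}$-norm is needed beyond what is already packaged into the definition of $D_{W,B}^{p}$.
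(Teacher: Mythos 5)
Your proof is correct and follows essentially the same route as the paper's: extract a finite $\epsilon$-net $\{\tau_{1},\dots,\tau_{m}\}\subseteq\Lambda_{0}$ from total boundedness, take $L=\max_{j}|\tau_{j}|$, and for ${\bf t}_{0}=-\sigma$ use the shifted point $\tau_{j}-\sigma\in\Lambda_{0}-\Lambda_{0}$ together with the translation invariance coming from $\sigma+\Lambda=\Lambda$. The only cosmetic difference is that you perform the translation at the level of the pseudometric $D_{W,B}^{p}$ via the invariance identity recorded after \eqref{jebiga1}, whereas the paper substitutes $T={\bf t}+\tau$ directly in the Stepanov seminorms for $l\geq l_{0}$; both amount to the same computation.
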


\begin{proof}
Let $\epsilon>0$ and $B\in {\mathcal B}$ be fixed.
Due to the $({\mathcal B},\Lambda_{0})$-normality of function $F(\cdot;\cdot)$, we have that there exist a positive integer $m\in {\mathbb N}$ and a finite subset $\{\tau_{1},\tau_{2},\cdot \cdot \cdot, \tau_{m}\}$ of $\Lambda_{0}$ such that for each ${\bf t}_{0}=-\tau\in 
-\Lambda_{0}$ there exist $j\in {\mathbb N}_{m}$ and $l_{0}>0$ such that, for every $l\geq l_{0}$ and $x\in B,$ we have 
$$
\sup_{{\bf t}\in \Lambda,x\in B}\Biggl[ l^{-n/p}\Bigl\| F({\bf t}+\tau+\cdot;x)-F({\bf t}+\tau_{j}+\cdot;x)\Bigr\|_{L^{p}(l\Omega : Y)} \Biggr]<\epsilon.
$$
Substituting $T={\bf t}+\tau$ and using the assumption that $\tau +\Lambda=\Lambda$ for all $\tau \in \Lambda_{0},$ the above implies
$$
\sup_{{\bf t}\in \Lambda,x\in B}\Biggl[ l^{-n/p}\Bigl\| F({\bf t}+\cdot;x)-F({\bf t}+(\tau_{j}-\tau)+\cdot;x)\Bigr\|_{L^{p}(l\Omega : Y)} \Biggr]<\epsilon.
$$
Set $L:=\max\{ |\tau_{j}| : j\in {\mathbb N}_{m} \}.$ Then $\tau_{j}-\tau\in \Lambda_{0}-\Lambda_{0}$ and $\tau_{j}-\tau\in B({\bf t}_{0},L),$ which simply implies the required.
\end{proof}

It is worth noting that Proposition \ref{marencew} can be applied even in the case that the assumption $\Lambda=\Lambda_{0}={\mathbb R}^{n}$ is not satisfied. For example, we can take $\Lambda:=\{(x_{1},\cdot \cdot \cdot,x_{n-1},x_{n}) \in {\mathbb R}^{n} : x_{i}\geq 0\mbox{ for all }i\in {\mathbb N}_{n-1}\}$  and $\Lambda_{0}:=\{(0,0,\cdot \cdot \cdot,0,x_{n}) : x_{n}\in {\mathbb R}\};$
furthermore, the case in which $-\Lambda_{0}\neq \Lambda_{0}-\Lambda_{0}$ can also happen since we can take $\Lambda:={\mathbb R}^{n}$ and $\Lambda_{0}:=a+W,$ where $a\neq 0$ and $W$ is a non-trivial subspace of ${\mathbb R}^{n}$ (then $\Lambda_{0}-\Lambda_{0}=W\neq -\Lambda_{0}$).

\begin{example}\label{prckoIII} (\cite{jstryja})
Let $\Lambda=\Lambda'={\mathbb R},$
$X=\{0\},$ ${\mathcal B}=\{X\},$ $Y={\mathbb C}$ and ${\mathrm R}$ being the collection of all sequences in ${\mathbb R}$. Define the function $f : {\mathbb R} \rightarrow {\mathbb R} $ by $f(x):=0$ for $x \leq 0,$ $f(x):=\sqrt{n/2}$ for $x\in (n-2,n-1]$ ($n\in 2{\mathbb N}$) and $f(x):=-\sqrt{n/2}$ for $x\in (n-1,n]$ ($n\in 2{\mathbb N}$). Then $f(\cdot)$ is Weyl-$1$-almost periodic, Weyl $1$-unbounded, but neither equi-Weyl-$1$-almost periodic nor Weyl-$1$-normal, so that the converse of Proposition \ref{marencew} does not hold, in general. Although may be interesting, we will not consider here the general case $p>1$ as well as some more complicated relatives of Example \ref{prckoI}-Example \ref{prckoII} with locally integrable functions $F : {\mathbb R}^{n}\rightarrow {\mathbb R}$ whose range is at most countable. 
\end{example}

Therefore, one needs to impose some extra conditions ensuring that the inclusion\\ $F\in W^{p}_{\Omega,-\Lambda_{0},\Lambda_{0}-\Lambda_{0},{\mathcal B}}(\Lambda\times X :Y)$ implies that $F(\cdot;\cdot)$ is $({\mathcal B},\Lambda_{0})$-normal. In the following result, the assumption $\Lambda=\Lambda_{0}={\mathbb R}^{n}$ is almost inevitable to be made (see also \cite{kovanko}, \cite[Theorem 4.22, Theorem 4.23]{deda} and the proof of necessity in \cite[Theorem 4.12]{deda}; the compactness criteria for the sets in the spaces of (equi-)Weyl-$p$-almost periodic functions have been analyzed in \cite{kovanko1951}-\cite{kovanko1953} with the help of Lusternik type theorems, we will not reconsider these results in the multi-dimensional framework):

\begin{prop}\label{kovanko-supere}
Suppose that $F : {\mathbb R}^{n}  \times X \rightarrow Y$ is such that \emph{(L)} holds, $\Lambda_{0} ={\mathbb R}^{n}$ and\\ $F\in W^{p}_{\Omega,{\mathbb R}^{n},{\mathbb R}^{n},{\mathcal B}}({\mathbb R}^{n}\times X :Y).$ If for each $\epsilon>0$ and $B\in {\mathcal B}$ there exists $\delta>0$ such that $D_{W,B}^{p}(F(\cdot;\cdot),F(\cdot +{\bf v} ;\cdot))<\epsilon$ for every ${\bf v} \in {\mathbb R}^{n}$ with $|{\bf v}|\leq \delta,$
then $F(\cdot;\cdot)$ is $({\mathcal B},{\mathbb R}^{n})$-normal.
\end{prop}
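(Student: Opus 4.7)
The plan is to reduce total boundedness of $\mathrm{F}_{\mathbb{R}^n}=\{F(\cdot+\sigma;\cdot):\sigma\in\mathbb{R}^n\}$ in $({\mathrm B}_{W,B}^{p},D_{W,B}^{p})$ to two classical ingredients: the existence of relatively dense $\varepsilon$-Weyl-periods, supplied by $F\in W^{p}_{\Omega,\mathbb R^{n},\mathbb R^{n},\mathcal B}$, and the local Weyl-continuity of translations, supplied by the hypothesis on $\delta$. Throughout I use the translation invariance $D_{W,B}^{p}(F(\cdot+\tau;\cdot),G(\cdot+\tau;\cdot))=D_{W,B}^{p}(F,G)$ (valid because $\tau+\mathbb R^{n}=\mathbb R^{n}$) and the triangle inequality \eqref{joj-boze}, both already established; I also use that $D_{W,B}^{p}$ is the limit (not merely the $\limsup$) of $D_{S_{l\Omega},B}^{p}$, as shown just before \eqref{mekomte}.

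Fix $\varepsilon>0$ and $B\in\mathcal B$. First, invoke the hypothesis to choose $\delta>0$ so that $|\mathbf v|\leq\delta$ implies $D_{W,B}^{p}(F,F(\cdot+\mathbf v;\cdot))<\varepsilon/2$. Next, apply the definition of $W^{p}_{\Omega,\mathbb R^{n},\mathbb R^{n},\mathcal B}$ with tolerance $\varepsilon/2$ to obtain $L>0$ such that each ball of radius $L$ in $\mathbb R^{n}$ contains some $\tau$ with
\[
\limsup_{l\to +\infty}\sup_{x\in B}\sup_{\mathbf t\in\mathbb R^{n}}l^{-n/p}\bigl\|F(\tau+\mathbf u;x)-F(\mathbf u;x)\bigr\|_{L^{p}(\mathbf t+l\Omega:Y)}<\varepsilon/2,
\]
and therefore $D_{W,B}^{p}(F,F(\cdot+\tau;\cdot))<\varepsilon/2$. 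Since the closed ball $\overline{B(0,L)}$ is precompact in $\mathbb R^{n}$, cover it by finitely many open $\delta$-balls $B(\mathbf v_{1},\delta),\dots,B(\mathbf v_{m},\delta)$.

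I claim $\{F(\cdot+\mathbf v_{j};\cdot):1\leq j\leq m\}$ is a finite $\varepsilon$-net for $\mathrm F_{\mathbb R^{n}}$ in the pseudometric $D_{W,B}^{p}$. Indeed, given an arbitrary $\sigma\in\mathbb R^{n}$, apply the relatively dense property with $\mathbf t_{0}=\sigma$ to obtain a Weyl $\varepsilon/2$-period $\tau\in B(\sigma,L)$; then $\sigma-\tau\in\overline{B(0,L)}$, so $\sigma-\tau=\mathbf v_{j}+\mathbf w$ for some $j\in\{1,\dots,m\}$ and some $\mathbf w$ with $|\mathbf w|\leq\delta$. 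Using translation invariance and \eqref{joj-boze},
\begin{align*}
D_{W,B}^{p}\bigl(F(\cdot+\sigma;\cdot),F(\cdot+\mathbf v_{j};\cdot)\bigr)
&=D_{W,B}^{p}\bigl(F(\cdot+\tau+\mathbf w;\cdot),F(\cdot;\cdot)\bigr)\\
&\leq D_{W,B}^{p}\bigl(F(\cdot+\tau+\mathbf w;\cdot),F(\cdot+\mathbf w;\cdot)\bigr)+D_{W,B}^{p}\bigl(F(\cdot+\mathbf w;\cdot),F(\cdot;\cdot)\bigr)\\
&=D_{W,B}^{p}\bigl(F(\cdot+\tau;\cdot),F\bigr)+D_{W,B}^{p}\bigl(F(\cdot+\mathbf w;\cdot),F\bigr)\\
&<\varepsilon/2+\varepsilon/2=\varepsilon,
\end{align*}
which gives the desired cover.

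There is no real obstacle here beyond bookkeeping: the whole argument is a Bohr-type combination of relative density with equi-uniform continuity, and the main thing to verify carefully is that the translation invariance of $D_{W,B}^{p}$ can be freely applied at each step, which is automatic because $\Lambda_{0}=\Lambda=\mathbb R^{n}$ and every translation preserves $\mathbb R^{n}$. One minor point worth noting is that the almost periodicity condition is formulated with a $\limsup$ rather than a limit, but the preliminary analysis in Subsection \ref{dbdb} shows $\limsup_{l\to\infty}D_{S_{l\Omega},B}^{p}=\lim_{l\to\infty}D_{S_{l\Omega},B}^{p}=D_{W,B}^{p}$, so the transition to the Weyl pseudometric is immediate.
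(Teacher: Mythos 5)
Your proof is correct and follows essentially the same route as the paper's: combine the relative density of Weyl $\varepsilon/2$-periods with the hypothesis of Weyl-continuity of translations, take a finite $\delta$-net of the ball of radius $L$, and use translation invariance of $D_{W,B}^{p}$ together with the triangle inequality \eqref{joj-boze} to show the translates of $F$ along that net form a finite $\varepsilon$-net. The only cosmetic difference is that the paper realizes the $\delta$-net as the lattice $S_{\delta}$ and phrases the estimate around $F(\cdot-{\bf t}_{0};\cdot)$ rather than $F(\cdot+\sigma;\cdot)$.
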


\begin{proof}
Let $\epsilon>0$ and $B\in {\mathcal B}$ be given. 
Due to our assumption, we have the existence of a finite real number $l>0$ such that, for every ${\bf t}_{0}\in {\mathbb R}^{n},$ there exists $\eta \in B({\bf t}_{0},l)$ such that
$D_{W,B}^{p}(  F(\cdot;\cdot),F(\cdot +\eta;\cdot))<\epsilon/2.$ Furthermore, there exists $\delta>0$ such that $D_{W,B}^{p}(F(\cdot;\cdot),F(\cdot +{\bf v} ;\cdot))<\epsilon/2$ for every ${\bf v} \in {\mathbb R}^{n}$ with $|{\bf v}|\leq \delta.$ Let $m\in {\mathbb N}$ be such that $m\delta>l,$ and let $S_{\delta}$ denote the set consisting of all points of form $(a_{1}\delta,\cdot \cdot \cdot, a_{n}\delta) \in B(0,m\delta),$ where $a_{j}\in {\mathbb Z}$ for all $j\in {\mathbb N}_{n}.$ With the same notation as above, we have $-{\bf t}_{0}+\eta\in B(0,l),$ and therefore, there exists $\zeta\in S_{\delta}$ such that
$|{\bf v}|=|-{\bf t}_{0}+\eta-\zeta|<\delta.$ This implies $D_{W,B}^{p}(F(\cdot;\cdot), F(\cdot +[-{\bf t}_{0}+\eta-\zeta]; \cdot))=D_{W,B}^{p}(F(\cdot +\zeta; \cdot), F(\cdot -{\bf t}_{0}+\eta; \cdot))<\epsilon/2.$  But, then we have{\small
\begin{align*}
& D_{W,B}^{p}\bigl(F(\cdot -{\bf t}_{0}; \cdot), F(\cdot +\zeta; \cdot)\bigr)
\\& \leq D_{W,B}^{p}\bigl(F(\cdot +\zeta; \cdot), F(\cdot -{\bf t}_{0}+\eta; \cdot)\bigr)+D_{W,B}^{p}\bigl(F(\cdot -{\bf t}_{0}+\eta; \cdot), F(\cdot -{\bf t}_{0}; \cdot)\bigr) \leq 2\cdot \frac{\epsilon}{2}=\epsilon,
\end{align*}}
which completes the proof.
\end{proof}

\subsection{The existence of Bohr-Fourier coefficients for multi-dimensional Weyl almost periodic functions}\label{prckojam}

At the very beginning of this subsection, we feel it is our duty to emphasize that some relations presented in \cite[Table 2, p. 56]{deda} seem to be stated incorrectely. 
The main mistake made is that the authors have interchanged at some places the class of equi-Weyl-$p$-almost periodic functions and Weyl-$p$-almost periodic functions, which can be simply justified by taking a closer look at the references quoted: in the research articles \cite{besik-bor} and \cite{bohr-folner}, as well as in the research monographs \cite{besik}, \cite{guter} and its English translation published by Pergamon Press, Oxford in 1966, the class of Weyl-$p$-almost periodic functions in the sense of Kovanko's approach has not been considered at all (the authors of \cite{besik}, \cite{besik-bor}, \cite{bohr-folner} and \cite{guter} have called an equi-Weyl-$p$-almost periodic function simply a Weyl-$p$-almost periodic functions therein). Therefore, there is no reasonable information yet which could tell us whether the class of Weyl-$p$-almost periodic functions is contained in the class of Besicovitch $p$-almost periodic functions or not, as well as whether a Weyl-$p$-almost periodic function $f : {\mathbb R} \rightarrow {\mathbb C}$ has the mean value ($1\leq p<\infty$). We would like to propose these questions to our readers.

Based on the evidence given in the proof of the subsequent result, it is our strong belief that we must deal with the class of equi-Weyl-$p$-almost periodic functions in order to ensure the existence of the mean value and the Bohr-Fourier coefficients for a function $F : \Lambda \times X \rightarrow Y.$ The assumptions $X=\{0\}$ and $p=1$ (due to the obvious embedding) are reasonable to be made, when we have the following:

\begin{thm}\label{nie}
Suppose that $\lambda \in {\mathbb R}^{n},$ $[0,\infty)^{n}= \Lambda' \subseteq \Lambda,$ $\Omega=[0,1]^{n},$ 
$F : \Lambda \rightarrow Y$ is Stepanov $(\Omega,1)$-bounded and satisfies that the function ${\bf t}\mapsto F_{\lambda}({\bf t}):=e^{-i\langle \lambda, {\bf {\bf t}}\rangle }F({\bf t}),$ ${\bf t}\in {\mathbb R}^{n}$
belongs to the space
$e-W_{ap,\Lambda}^{1} (\Lambda : Y).$ Then the Bohr-Fourier coefficient
$P_{\lambda}(F)$ of $F(\cdot),$ defined by
\begin{align}\label{cell}
P_{\lambda}(F):=\lim_{T\rightarrow +\infty}\frac{1}{T^{n}}\int_{{\bf s}+[0,T]^{n}}e^{-i\langle \lambda, {\bf {\bf t}}\rangle }F({\bf t})\, d{\bf t},
\end{align}
exists and does not depend on the choice of a tuple ${\bf s} \in [0,\infty)^{n}.$ Moreover, for every $\epsilon>0,$ there exists a real number $T_{0}(\epsilon)>0$ such that, for every $T\geq T_{0}(\epsilon)$ and ${\bf s} \in [0,\infty)^{n},$ we have
\begin{align}\label{zajebano}
\Biggl\| \frac{1}{T^{n}}\int_{[0,T]^{n}}e^{-i\langle \lambda, {\bf {\bf t}}\rangle }F({\bf t})\, d{\bf t}-\frac{1}{T^{n}}\int_{{\bf s}+[0,T]^{n}}e^{-i\langle \lambda, {\bf {\bf t}}\rangle }F({\bf t})\, d{\bf t} \Biggr\|_{Y}<\epsilon.
\end{align}
\end{thm}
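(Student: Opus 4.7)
The plan is to derive both claims from a single uniform oscillation estimate. Writing $F_\lambda(\mathbf{t}) := e^{-i\langle \lambda, \mathbf{t}\rangle} F(\mathbf{t})$, $M_T^{\mathbf{s}} := T^{-n}\int_{\mathbf{s}+[0,T]^n} F_\lambda(\mathbf{t})\,d\mathbf{t}$, and $M_T := M_T^{\mathbf{0}}$, I will show that for every $\epsilon > 0$ there exists $T_0 = T_0(\epsilon)>0$ such that $\|M_T^{\mathbf{s}} - M_T\|_Y < \epsilon$ for all $T \geq T_0$ and all $\mathbf{s} \in [0,\infty)^n$. This is precisely \eqref{zajebano}, and the existence of the limit \eqref{cell} together with its $\mathbf{s}$-independence will follow from a tiling identity.

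For the oscillation estimate, first note that $|e^{-i\langle \lambda, \cdot\rangle}| \equiv 1$ transfers the Stepanov $(\Omega,1)$-bound $M := \|F\|_{S^{\Omega,1}}$ verbatim to $F_\lambda$. Applying $F_\lambda \in e-W_{ap,\Lambda}^{1}(\Lambda : Y)$ with tolerance $\epsilon/3$ extracts $l, L > 0$ governing the Weyl-shifts; given $\mathbf{s} \in [0,\infty)^n \subseteq \Lambda$, I choose $\tau \in B(\mathbf{s}, L) \cap \Lambda$ satisfying the corresponding Weyl estimate and split $M_T^{\mathbf{s}} - M_T = (M_T^{\mathbf{s}} - M_T^{\tau}) + (M_T^{\tau} - M_T)$. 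The second summand is the average over $[0,T]^n$ of $\|F_\lambda(\cdot+\tau) - F_\lambda(\cdot)\|_Y$; tiling this cube by $\lceil T/l\rceil^n$ translates of $l\Omega$ and summing the local Weyl bound controls it by $(\epsilon/3)(1 + l/T)^n$. The first summand equals the integral of $F_\lambda$ over the symmetric difference of $\mathbf{s}+[0,T]^n$ and $\tau+[0,T]^n$, a boundary layer of thickness at most $L$; covering this layer by $O(LT^{n-1})$ translates of $\Omega$ and invoking the Stepanov bound yields an $O(ML/T)$ estimate. Choosing $T_0$ so large that both contributions fall below $\epsilon/2$ completes the step.

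To obtain the limit, I exploit the exact partition of $[0,kT]^n$ into the $k^n$ translates $\mathbf{j}T + [0,T]^n$, $\mathbf{j} \in \{0,\ldots,k-1\}^n$, which yields the identity $M_{kT} = k^{-n}\sum_{\mathbf{j}} M_T^{\mathbf{j}T}$. The uniform oscillation estimate then forces $\|M_{kT} - M_T\|_Y < \epsilon$ for every $k \in \mathbb{N}$ and every $T \geq T_0$. Comparing two values $M_T$ and $M_{T'}$ through the common multiple $M_{kTT'}$ (which lies within $\epsilon$ of $M_T$ in the scale-$T$ version and within $\epsilon$ of $M_{T'}$ in the scale-$T'$ version) yields $\|M_T - M_{T'}\|_Y < 2\epsilon$ for all $T, T' \geq T_0$, so $\{M_T\}$ is Cauchy in $Y$ and converges to some $P_\lambda(F)$. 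Feeding this back into the uniform estimate delivers $M_T^{\mathbf{s}} \to P_\lambda(F)$ uniformly in $\mathbf{s}$, establishing both \eqref{cell} and its promised $\mathbf{s}$-independence.

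The principal technical obstacle I anticipate is the boundary-layer bound inside the oscillation estimate: Stepanov $(\Omega,1)$-boundedness only controls integrals over unit cubes, so I must produce an economical covering of the shell of thickness $L$ around $[0,T]^n$ by $O(LT^{n-1})$ translates of $\Omega$, rather than the wasteful $O(T^n)$ one would obtain from covering the whole ambient box, in order to make the resulting error $O(ML/T)$ instead of merely $O(M)$. Once that covering is in hand, every other step reduces either to the defining equi-Weyl inequality or to routine tiling arithmetic.
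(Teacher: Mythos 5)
Your proof is correct in its essentials and follows the same route as the paper's: both arguments rest on exactly the two estimates you isolate, namely the rescaled equi-Weyl bound on the $\tau$-translate (costing a harmless factor $(1+l/T)^{n}$) and the covering of the discrepancy between $\mathbf{s}+[0,T]^{n}$ and $\tau+[0,T]^{n}$ by $O(LT^{n-1})$ unit cells, which converts the Stepanov $(\Omega,1)$-bound into an $O(\|F\|_{S^{\Omega,1}}L/T)$ error. The paper organizes this as a comparison of $M_{T}$ with $M_{kT}$ first and obtains \eqref{zajebano} only afterwards, by rerunning the argument for the shifted functions $F_{\lambda}(\cdot+\mathbf{s})$ with $l,L$ chosen uniformly in $\mathbf{s}$; your order (uniform oscillation estimate first, everything else as a corollary) is if anything cleaner. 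The one step that does not work as literally written is the Cauchy argument ``through the common multiple $M_{kTT'}$'': the tiling identity $M_{mT}=m^{-n}\sum_{\mathbf{j}}M_{T}^{\mathbf{j}T}$ requires the multiplier $m$ to be a positive integer, and $kT'$ (resp.\ $kT$) is an integer for no $k\in{\mathbb N}$ unless $T'/T$ is rational, so $M_{kTT'}$ need not lie within $\epsilon$ of either $M_{T}$ or $M_{T'}$ by your lemma. The standard repair --- essentially what the classical one-dimensional argument the paper cites does --- is to compare $M_{kT'}$ with $M_{mT}$ for $m=\lfloor kT'/T\rfloor$, so that the boxes $[0,kT']^{n}$ and $[0,mT]^{n}$ differ by a shell of thickness less than $T$, and to absorb that discrepancy with one more application of your boundary-layer covering together with the Stepanov bound; letting $k\to\infty$ then yields $\|M_{T}-M_{T'}\|_{Y}\leq 2\epsilon$. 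Since the needed covering estimate is already in your proof, this is a local fix rather than a structural one.
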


\begin{proof}
We slightly modify the arguments contained in the proof of corresponding statement given in the one-dimensional case (see e.g., \cite[Theorem 1.3.1-Theorem 1.3.2, pp. 32-35]{188}).
Fix the numbers $\epsilon>0$ and $\lambda \in {\mathbb R}^{n}.$ We know that there exist two finite real numbers
$l>0$
and
$L>0$ such that for each ${\bf t}_{0}\in [0,\infty)^{n}$ there exists $\tau \in B({\bf t}_{0},L)\cap [0,\infty)^{n}$ such that
\begin{align}\label{sajko}
\sup_{{\bf t}\in \Lambda}\bigl\| F({\bf \tau}+\cdot)-F(\cdot) \bigr\|_{L^{1}({\bf t}+l\Omega:Y)}
<\epsilon \cdot l^{n}.
\end{align}
Let $T>l$ be an arbitrary real number and let $k\in {\mathbb N}.$ Denote by $A_{T,k}=\{{\bf s}_{1},\cdot \cdot \cdot ,{\bf s}_{k^{n}}\}$ the collection of all points ${\bf s}\in T\cdot {\mathbb N}_{0}^{n}$ such that ${\bf s}+[0,T]^{n} \subseteq [0,kT]^{n}.$ Further on, let $B_{T,k}=\{{\bf \tau}_{1},\cdot \cdot \cdot ,{\bf \tau}_{k^{n}}\}$ be a collection of points in $[0,\infty)^{n}$ such that
$|{\bf \tau}_{k}-{\bf \tau}_{j}|\leq L$
for all $j\in {\mathbb N}_{k^{n}}$ as well as that \eqref{sajko} holds with the number $\tau$ replaced therein with the number $\tau_{j}$ ($j\in {\mathbb N}_{k^{n}}$). Due to the computation following the equation \eqref{weyl188188}, we have that \eqref{sajko} implies 
$
\sup_{{\bf t}\in \Lambda}\| F({\bf \tau}+\cdot)-F(\cdot) \|_{L^{1}({\bf t}+T\Omega:Y)}
<\epsilon \cdot 2^{n}T^{n};
$
in particular, 
\begin{align}\label{sajko1}
\bigl\| F({\bf \tau}+\cdot)-F(\cdot) \bigr\|_{L^{1}(T\Omega:Y)}
<\epsilon \cdot 2^{n}T^{n}.
\end{align}
Keeping in mind \eqref{sajko1}, we get:
\begin{align*}
\Biggl\|&\frac{1}{T^{n}}\int_{[0,T]^{n}}F_{\lambda}({\bf t})\, d{\bf t}-\frac{1}{(kT)^{n}}\int_{[0,kT]^{n}}F_{\lambda}({\bf t})\, d{\bf t}\Biggr\|_{Y}
\\& \leq \frac{\sum_{j=1}^{k^{n}}\Bigl\|\frac{1}{T^{n}}\int_{[0,T]^{n}}F_{\lambda}({\bf t})\, d{\bf t}-\frac{1}{T^{n}}\int_{{\bf s}_{j}+[0,T]^{n}}F_{\lambda}({\bf t})\, d{\bf t}\Bigr\|_{Y}}{k^{n}} 
\\& =\frac{\sum_{j=1}^{k^{n}}\Bigl\|\frac{1}{T^{n}}\int_{[0,T]^{n}}F_{\lambda}({\bf t})\, d{\bf t}-\frac{1}{T^{n}}\int_{[0,T]^{n}}F_{\lambda}({\bf s}_{j}+{\bf t})\, d{\bf t}\Bigr\|_{Y}}{k^{n}} 
\\& \leq \frac{\sum_{j=1}^{k^{n}}\Bigl\|\frac{1}{T^{n}}\int_{[0,T]^{n}}F_{\lambda}({\bf t})\, d{\bf t}-\frac{1}{T^{n}}\int_{[0,T]^{n}}F_{\lambda}({\bf \tau}_{j}+{\bf t})\, d{\bf t}\Bigr\|_{Y}}{k^{n}} 
\\& +\frac{\sum_{j=1}^{k^{n}}\Bigl\|\frac{1}{T^{n}}\int_{[0,T]^{n}}F_{\lambda}({\bf \tau}_{j}+{\bf t})\, d{\bf t}-\frac{1}{T^{n}}\int_{[0,T]^{n}}F_{\lambda}({\bf s}_{j}+{\bf t})\, d{\bf t}\Bigr\|_{Y}}{k^{n}} 
\\& \leq \epsilon 2^{n}+\frac{\sum_{j=1}^{k^{n}}\Bigl\|\frac{1}{T^{n}}\int_{[0,T]^{n}}F_{\lambda}({\bf \tau}_{j}+{\bf t})\, d{\bf t}-\frac{1}{T^{n}}\int_{[0,T]^{n}}F_{\lambda}({\bf s}_{j}+{\bf t})\, d{\bf t}\Bigr\|_{Y}}{k^{n}} 
\\& =\epsilon 2^{n}+\frac{\sum_{j=1}^{k^{n}}\Bigl\|\frac{1}{T^{n}}\int_{({\bf \tau}_{j}+[0,T]^{n}) \setminus ({\bf s}_{j}+[0,T]^{n})}F_{\lambda}({\bf t})\, d{\bf t}\Bigr\|_{Y}}{k^{n}} .
\end{align*}
Since $|{\bf s}_{j}-{\bf \tau}_{j}|\leq L$
for all $j\in {\mathbb N}_{k^{n}}$, an elementary geometrical argument shows that there exists a finite real constant $c_{n}\in {\mathbb N}$ such that the set $({\bf \tau}_{j}+[0,T]^{n}) \setminus ({\bf s}_{j}+[0,T]^{n})$ can be covered by at most $\lceil L  T^{n-1}\rceil  $ translations of the cell $[0,1]^{n},$ so that the Stepanov $(\Omega,1)$-boundedness of $F(\cdot)$ implies that there exists a finite real number $T(\epsilon)>0$ such that
\begin{align}
\notag\Biggl\|&\frac{1}{T^{n}}\int_{[0,T]^{n}}F_{\lambda}({\bf t})\, d{\bf t}-\frac{1}{(kT)^{n}}\int_{[0,kT]^{n}}F_{\lambda}({\bf t})\, d{\bf t}\Biggr\|_{Y}
\\\label{ledzendo}&\leq
\epsilon 2^{n}+\|F\|_{S^{\Omega,1}}\frac{\lceil L  T^{n-1}\rceil }{T}\leq \epsilon \bigl( 2^{n}+1\bigr),\quad T\geq T(\epsilon).
\end{align}
After that, we can repeat verbatim the argumentation contained in the proof of \cite[Theorem 1.3.1, p. 33]{188} in order to see that the limit 
$$
\lim_{T\rightarrow +\infty}\frac{1}{T^{n}}\int_{[0,T]^{n}}e^{-i\langle \lambda, {\bf {\bf t}}\rangle }F({\bf t})\, d{\bf t}
$$
exists on the account of the Cauchy principle of convergence. The above geometrical argument with ${\bf s}_{j}=0$ and ${\bf t}_{j}=0$
implies that 
\begin{align*}
\lim_{T\rightarrow +\infty}\frac{1}{T^{n}}\int_{[0,T]^{n}}e^{-i\langle \lambda, {\bf {\bf t}}\rangle }F({\bf t})\, d{\bf t}=\lim_{T\rightarrow +\infty}\frac{1}{T^{n}}\int_{{\bf s}+[0,T]^{n}}e^{-i\langle \lambda, {\bf {\bf t}}\rangle }F({\bf t})\, d{\bf t}
\end{align*}
for all ${\bf s}\in [0,\infty)^{n},$ which completes the first part of proof. For the second part of proof, observe that for each ${\bf s}\in [0,\infty)^{n}$ the function ${\bf t}\mapsto F_{\lambda}({\bf t}+{\bf s}),$ ${\bf t}\in \Lambda$ belongs to the class $e-W_{ap,\Lambda}^{1} (\Lambda : Y)$ as well as that the numbers $l>0$ and $L>0$ in the corresponding definition can be chosen independently of ${\bf s}.$ Letting $k\rightarrow +\infty$ in \eqref{ledzendo}, we get: 
\begin{align}\label{nije da nije}
\Biggl\|\frac{1}{T^{n}}\int_{[0,T]^{n}}F_{\lambda}({\bf t})\, d{\bf t}-\lim_{T\rightarrow +\infty}\frac{1}{T^{n}}\int_{[0,T]^{n}}F_{\lambda}({\bf t})\, d{\bf t}\Biggr\|_{Y}\leq
\epsilon 2^{n}+\|F\|_{S^{\Omega,1}}\frac{\lceil L  T^{n-1}\rceil }{T}.
\end{align}
By the foregoing, the same estimate holds for the function ${\bf t}\mapsto F_{\lambda}({\bf t}+{\bf s}),$ ${\bf t}\in \Lambda$, so that
\begin{align}
\notag
\Biggl\| & \frac{1}{T^{n}}\int_{[0,T]^{n}}F_{\lambda}({\bf t}+{\bf s})\, d{\bf t}-\lim_{T\rightarrow +\infty}\frac{1}{T^{n}}\int_{[0,T]^{n}}F_{\lambda}({\bf t}+{\bf s})\, d{\bf t}\Biggr\|_{Y}
\\& \label{nije da nije1} \leq
\epsilon 2^{n}+\|F\|_{S^{\Omega,1}}\frac{\lceil L  T^{n-1}\rceil }{T},\quad {\bf s}\in [0,\infty)^{n}.
\end{align}
After simple substitution, the first part of proof shows that, for every ${\bf s}\in [0,\infty)^{n},$ we have:
$$
\lim_{T\rightarrow +\infty}\frac{1}{T^{n}}\int_{[0,T]^{n}}F_{\lambda}({\bf t})\, d{\bf t}=\lim_{T\rightarrow +\infty}\frac{1}{T^{n}}\int_{[0,T]^{n}}F_{\lambda}({\bf t}+{\bf s})\, d{\bf t}.
$$
Hence, in view of \eqref{nije da nije} and \eqref{nije da nije1}, we get 
$$
\Biggl\|\frac{1}{T^{n}}\int_{[0,T]^{n}}F_{\lambda}({\bf t})\, d{\bf t}-\frac{1}{T^{n}}\int_{[0,T]^{n}}F_{\lambda}({\bf t}+{\bf s})\, d{\bf t}\Biggr\|_{Y}\leq
\epsilon 2^{n+1}+2\|F\|_{S^{\Omega,1}}\frac{\lceil L  T^{n-1}\rceil }{T},
$$
which completes the proof of theorem.
\end{proof}

\begin{rem}\label{fghjkl}
If we assume $\Lambda'=\Lambda={\mathbb R}^{n}$ and accept all remaining requirements in Theorem \ref{nie}, then we get into a classical situation in which the corresponding class is contained in the class of Besicovitch $p$-almost periodic functions in ${\mathbb R}^{n}$ (see \cite[pp. 12-13]{pankov}; we can use the set $\Omega =[-1,1]^{n}$ here producing the same results). In this case, the function $F_{\lambda}\in
e-W_{ap,\Lambda}^{1} ({\mathbb R}^{n} : Y)$ if and only if $F \in e-W_{ap,\Lambda}^{1} ({\mathbb R}^{n} : Y)$ for each (some) $\lambda \in {\mathbb R}^{n};$ cf. also Theorem \ref{nie-normalan}. Further on, the argumentation contained in the proof of Theorem \ref{nie} shows that 
\begin{align*}
\lim_{T\rightarrow +\infty}\frac{1}{(2T)^{n}}\int_{{\bf s}+[-T,T]^{n}}e^{-i\langle \lambda, {\bf {\bf t}}\rangle }F({\bf t})\, d{\bf t}
\end{align*}
exists and does not depend on the choice of a tuple ${\bf s} \in {\mathbb R}^{n}$ as well as that, for every $\epsilon>0,$ there exists a real number $T_{0}(\epsilon)>0$ such that, for every $T\geq T_{0}(\epsilon)$ and ${\bf s} \in {\mathbb R}^{n},$ we have
\begin{align*}
\Biggl\| \frac{1}{(2T)^{n}}\int_{[-T,T]^{n}}e^{-i\langle \lambda, {\bf {\bf t}}\rangle }F({\bf t})\, d{\bf t}-\frac{1}{(2T)^{n}}\int_{{\bf s}+[-T,T]^{n}}e^{-i\langle \lambda, {\bf {\bf t}}\rangle }F({\bf t})\, d{\bf t} \Biggr\|_{Y}<\epsilon.
\end{align*}
But, the restriction of function $F(\cdot)$ to $[0,\infty)^{n}$ satisfies the requirements of Theorem \ref{nie} with $\Lambda'=\Lambda=[0,\infty)^{n}$ and we similarly obtain that \eqref{cell} holds for all ${\bf s}\in {\mathbb R}^{n}$ as well as that \eqref{zajebano}
holds for all ${\bf s} \in {\mathbb R}^{n};$ plugging ${\bf s}=(-T/2,\cdot \cdot \cdot ,-T/2)$ in this estimate, we particularly get that
$$
\lim_{T\rightarrow +\infty}\frac{1}{T^{n}}\int_{{\bf s}+[0,T]^{n}}e^{-i\langle \lambda, {\bf {\bf t}}\rangle }F({\bf t})\, d{\bf t}=\lim_{T\rightarrow +\infty}\frac{1}{(2T)^{n}}\int_{{\bf s}+[-T,T]^{n}}e^{-i\langle \lambda, {\bf {\bf t}}\rangle }F({\bf t})\, d{\bf t},
$$
as well as that the above limits exist and do not depend on the choice of a tuple ${\bf s} \in {\mathbb R}^{n}.$ It should be also noted that there exist at most countable values of $\lambda \in {\mathbb R}^{n}$ for which $P_{\lambda}(F) \neq 0$ since $F(\cdot)$ can be uniformly approximated in the Weyl norm by trigonometric polynomials and each of them has a finite Bohr-Fourier spectrum (i.e., the set $\{\lambda \in {\mathbb R}^{n} : P_{\lambda}(F)\neq 0\}$); see also \cite[Proposition 5.2]{TIMO}. But, the function $\chi_{[0,1/2)}(\cdot)$ is equi-Weyl-$p$-almost periodic for every $p\geq 1$ and its Bohr-Fourier spectrum is empty so that we cannot expect the validity of Parseval equality in our framework.
\end{rem}

\section{Applications to the abstract Volterra integro-differential equations and inclusions}\label{manuel}

In this section, we apply our results in the analysis of existence and uniqueness of the multi-dimensional Weyl almost periodic type solutions for various classes of abstract Volterra integro-differential equations. 

1. In the first example, we continue our analysis of the famous d'Alembert formula from \cite[Example 1.2]{multi-ce}.
Let $a>0;$ then we know that the regular solution of the wave equation $u_{tt}=a^{2}u_{xx}$ in domain $\{(x,t) : x\in {\mathbb R},\ t>0\},$ equipped with the initial conditions $u(x,0)=f(x)\in C^{2}({\mathbb R})$ and $u_{t}(x,0)=g(x)\in C^{1}({\mathbb R}),$ is given by the d'Alembert formula
$$
u(x,t)=\frac{1}{2}\bigl[ f(x-at) +f(x+at) \bigr]+\frac{1}{2a}\int^{x+at}_{x-at}g(s)\, ds,\quad x\in {\mathbb R}, \ t>0.
$$  
Let us suppose that the function $x\mapsto (f(x),g^{[1]}(x)),$ $x\in {\mathbb R}$
belongs to the class $e-W^{(1,x,{\mathbb F})}_{[0,1],{\mathbb R}}({\mathbb R}  : {\mathbb C})$, where 
$
g^{[1]}(\cdot) \equiv \int^{\cdot}_{0}g(s)\, ds.$  Then the solution 
$u(x,t)$ can be extended to the whole real line in the time variable and this solution 
belongs to the class $e-W^{(1,x,{\mathbb F}_{1})}_{[0,1]^{2},{\mathbb R}^{2}}({\mathbb R}^{2}  : {\mathbb C}),$
provided that 
$$
\sup_{l>0}\sup_{(t_{1},t_{2})\in {\mathbb R}^{2}}\Biggl[ \int^{t_{1}+(l/a)}_{t_{1}}\frac{{\mathbb F}_{1}(l,{\bf t})}{{\mathbb F}(l,x-at_{2}-l)}dx + \int^{t_{1}+(l/a)}_{t_{1}}\frac{{\mathbb F}_{1}(l,{\bf t})}{{\mathbb F}(l,x+at_{2})}dx\Biggr]<+\infty.
$$
To verify this, fix a positive real number $\epsilon>0.$ Then 
there exist two finite real numbers
$l>0$
and
$L>0$ such that for each $t_{0}\in {\mathbb R}$ there exists $\tau \in B(t_{0},L)$ such that
\begin{align}\label{whatusupprimer}
\sup_{t\in {\mathbb R}}{\mathbb F}(l,t)\bigl\| f(\tau+\cdot)-f(\cdot) \bigr\|_{L^{1}(t+l[0,1]: {\mathbb C})} <\epsilon
\end{align}
as well as that \eqref{whatusupprimer} holds with the function $f(\cdot)$ replaced therein with the function $g^{[1]}(\cdot).$ For our purposes, we choose 
the real numbers
$l/a$
and
$L'>L$ sufficiently large (see also the final part of the above-mentioned example).
We have ($x,\ t,\ \tau_{1},\ \tau_{2}\in {\mathbb R}$):
\begin{align}\label{jarakb}
\begin{split}
\Bigl|u\bigl(x&+\tau_{1},t+\tau_{2}\bigr)-u(x,t)\Bigr|
\\& \leq\frac{1}{2}\Bigl| f\bigl( (x-at)+(\tau_{1}-a\tau_{2}) \bigr)-f(x-at)\Bigr|
\\&+ \frac{1}{2}\Bigl| f\bigl( (x+at)+(\tau_{1}+a\tau_{2}) \bigr)-f(x+at )\Bigr|
\\& +\frac{1}{2a}\Bigl|g^{[1]}\bigl( (x-at)+(\tau_{1}-a\tau_{2}) \bigr)-g^{[1]}(x-at)\Bigr|
\\& +\frac{1}{2a}\Bigl| g^{[1]}\bigl( (x+at)-(\tau_{1}-a\tau_{2}) \bigr)-g^{[1]}(x+at)\Bigr|,
\end{split}
\end{align}
so that the final conclusion simply follows from condition imposed, the estimate \eqref{jarakb}, the computation
\begin{align*}
&\int_{t_{1}}^{t_{1}+(l/a)}\int_{t_{2}}^{t_{2}+(l/a)}\frac{1}{2}\Bigl| f\bigl( (x-at)+(\tau_{1}-a\tau_{2}) \bigr)-f(x-at)\Bigr|\, dx\, dt
\\& \leq \frac{1}{2}\int_{t_{1}}^{t_{1}+(l/a)}\int_{x-at_{2}-l}^{x-at_{2}}\Bigl| f\bigl( z+(\tau_{1}-a\tau_{2}) \bigr)-f(z)\Bigr|\, dz\, dx
\\& \leq \frac{1}{2}\int_{t_{1}}^{t_{1}+(l/a)}\frac{\epsilon}{{\mathbb F}(l,x-at_{2}-l)}\, dx,
\end{align*}
a similar computation for the corresponding term $f( (x+at)+(\tau_{1}+a\tau_{2}))-f(x+at)$ and the corresponding terms with the function $g^{[1]}(\cdot).$\vspace{0.1cm}

We continue with the following illustrative application to the Gaussian semigroup in ${\mathbb R}^{n}$:\vspace{0.1cm}

2. Let $Y$ be one of the spaces $L^{p}({\mathbb R}^{n}),$ $C_{0}({\mathbb R}^{n})$ or $BUC({\mathbb R}^{n}),$ where $1\leq p<\infty.$ It is well known that the Gaussian semigroup\index{Gaussian semigroup}
$$
(G(t)F)(x):=\bigl( 4\pi t \bigr)^{-(n/2)}\int_{{\mathbb R}^{n}}F(x-y)e^{-\frac{|y|^{2}}{4t}}\, dy,\quad t>0,\ f\in Y,\ x\in {\mathbb R}^{n},
$$
can be extended to a bounded analytic $C_{0}$-semigroup of angle $\pi/2,$ generated by the Laplacian $\Delta_{Y}$ acting with its maximal distributional domain in $Y.$ Suppose now that $1\leq p <\infty,$ $1/p+1/q=1,$
$t_{0}>0,$
$\emptyset  \neq \Lambda'\subseteq \Lambda= {\mathbb R}^{n},$ $h\in L^{1}({\mathbb R}^{n}),$ $\Omega=[0,1]^{n} $, $F\in (e-)W^{(p({\bf u}),\phi,{\mathbb F})}_{\Omega,\Lambda'}({\mathbb R}^{n} :{\mathbb C}),$ $1/p({\bf u})+1/q({\bf u})=1,$  
and $\sup_{{\bf t}\in {\mathbb R}^{n}}\| F({\bf t})\|<\infty.$ 
Suppose, further, that the functions ${\mathbb F}: (0,\infty) \times {\mathbb R}^{n} \rightarrow (0,\infty)$
and
${\mathbb F}_{1} : (0,\infty) \times {\mathbb R}^{n} \rightarrow (0,\infty)$
does not depend on ${\bf t}$, as well as that $p_{1}({\bf u})\equiv 1.$ 
If $\phi(x)=\varphi(x)=x,$ $x\geq 0$ and for each $l>0$ we have
$$
2l^{-n/p}\bigl(4\pi t_{0}\bigr)^{-n/2}\sum_{k\in l{\mathbb Z}^{n}}e^{-\frac{(|k|-3l\sqrt{n})^{2}}{4t_{0}}}\frac{{\mathbb F}_{1}(l)}{{\mathbb F}(l)} \leq 1,
$$
then Proposition \ref{shokiran} can be applied and gives that the function ${\mathbb R}^{n}\ni x\mapsto u(x,t_{0})\equiv (G(t_{0})F)(x) \in {\mathbb C}$ belongs to the class
$(e-)W^{(1,\phi,{\mathbb F}_{1})}_{\Omega,\Lambda'}({\mathbb R}^{n}:{\mathbb C}).$
It is worth noting that this proposition can be applied even in the case that $\phi(x)=\varphi(x)=x^{\alpha},$ $x\geq 0$ for some constant $\alpha>1$ but then we must allow that the function ${\mathbb F}_{1}(l)$ rapidly decays to zero as $l\rightarrow +\infty$ (notice only that the assumptions ${\bf u}\in {\bf t}+l\Omega$ and ${\bf v}\in {\bf u}-k+l\Omega$ for some ${\bf t}\in {\mathbb R}^{n}$ and $k\in l{\mathbb Z}^{n}$ imply ${\bf u}-{\bf v}\in k+l\Omega -l\Omega-l\Omega$ and therefore $|{\bf u}-{\bf v}|\geq |k|-3l\sqrt{n}$); Proposition \ref{shokiran1} can be also applied here.

Here, we would like to stress that our recent analyses from \cite[Example 0.1]{marko-manuel-ap} and the fifth point of the application section from \cite{stmarko-manuel-ap} can be used for certain applications of the multi-dimensional Weyl almost periodic functions.   Suppose, for example, that $A$ generates a strongly continuous semigroup $(T(t))_{t\geq 0}$ on a Banach space $X$ whose elements are certain complex-valued functions defined on ${\mathbb R}^{n}.$ Under some assumptions, 
the function
\begin{align*}
u(t,x)=\bigl(T(t)u_{0}\bigr)(x)+\int^{t}_{0}[T(t-s)f(s)](x)\, ds,\quad t\geq 0,\ x\in {\mathbb R}^{n}
\end{align*}
presents a unique classical solution of the abstract Cauchy problem
\begin{align*}
u_{t}(t,x)=Au(t,x)+F(t,x),\ t\geq 0,\ x\in {\mathbb R}^{n}; \ u(0,x)=u_{0}(x),
\end{align*} 
where $F(t,x):=[f(t)](x),$ $t\geq 0,$ $x\in {\mathbb R}^{n}.$ In many cases (for example, this holds for the Gaussian semigroup on ${\mathbb R}^{n}$), there exists a kernel $(t,y)\mapsto E(t,y),$ $t> 0,$ $y\in {\mathbb R}^{n}$ which is integrable on any set $[0,T]\times {\mathbb R}^{n}$ ($T>0$) and satisfies that
$$
[T(t)f(s)](x)=\int_{{\mathbb R}^{n}}F(s,x-y)E(t,y)\, dy,\quad t>0,\ s\geq 0,\ x\in {\mathbb R}^{n}.
$$
Let it be the case, and let
$t_{0}>0.$ Suppose, for example, that the function $F(t,x)$
belongs to the space $(e-)W^{[1,x,{\mathbb F}]}_{\Omega,\Lambda'}({\mathbb R}^{n} : {\mathbb C})$ with respect to the variable $x\in {\mathbb R}^{n},$ uniformly in the variable $t$ on compact subsets of $[0,\infty),$ with the meaning clear. Then we have (${\bf t},\ \tau \in {\mathbb R}^{n};$ ${\bf u}\in \Omega,$ $l>0$):
\begin{align*}
\Bigl| & u_{t_{0}}({\bf t}+\tau +l{\bf u})- u_{t_{0}}({\bf t}+l{\bf u}) \Bigr|
\\& \leq  \int^{t_{0}}_{0} \int_{{\mathbb R}^{n}}
 | F(s,{\bf t}+\tau-y+l{\bf u})-F(s,{\bf t}-y+l{\bf u})| \cdot \bigl|E\bigl(t_{0},y\bigr)\bigr|\, dy\, ds.
\end{align*}
Suppose also that the function ${\mathbb F}(l,{\bf t})
$ does not depend on the variable ${\bf t}.$
Integrating the above estimate over $\Omega$ and using the Fubini theorem, we obtain (${\bf t},\ \tau \in {\mathbb R}^{n},$ $l>0$):
\begin{align*}
\int_{\Omega}&\Bigl|  u_{t_{0}}({\bf t}+\tau +l{\bf u})- u_{t_{0}}({\bf t}+l{\bf u}) \Bigr|\, du
\\& \leq  \int^{t_{0}}_{0} \int_{{\mathbb R}^{n}}\Biggl[\int_{\Omega}| F(s,{\bf t}+\tau-y+l{\bf u})-F(s,{\bf t}-y+l{\bf u})|\, d{\bf u}\Biggr] \cdot \bigl|E\bigl(t_{0},y\bigr)\bigr|\, dy\, ds
\\& \leq \epsilon l^{-n}\bigl[{\mathbb F}(l)\bigr]^{-1} \int^{t_{0}}_{0} \int_{{\mathbb R}^{n}}\bigl|E\bigl(t_{0},y\bigr)\bigr|\, dy\, ds,
\end{align*}
which implies that the function $u_{t_{0}}(\cdot)$ belongs to the class $(e-)W^{[1,x,{\mathbb F}]}_{\Omega,\Lambda'}({\mathbb R}^{n} :{\mathbb C}).$

3. Suppose now that $Y:=L^{r}({\mathbb R}^{n})$ for some $r\in [1,\infty)$  and $ A(t):= \Delta +a(t)I$, $t \geq 0$, where $\Delta$ is the Dirichlet Laplacian on $L^{r}(\mathbb{R}^{n}),$ $I$ is the identity operator on $L^{r}(\mathbb{R}^{n})$ and $ a \in L^{\infty}([0,\infty)) $. Then it is well known
that the evolution system $(U(t,s))_{t\geq s\geq 0}\subseteq L(Y)$ generated by the family $(A(t))_{t\geq 0}$ exists and is given by $U(t,t):=I$ for all $t\geq 0$ and
$$ 
[U(t,s)F]({\bf u}):=\int_{{\mathbb R}^{n}} K(t,s,{\bf u},{\bf v})F({\bf v}) \, d{\bf v}, \quad F\in L^{r}({\mathbb R}^{n}),\quad t> s\geq 0,
$$
where 
$K(t,s,{\bf u},{\bf v})$ is given by 
$$   
K(t,s,{\bf u},{\bf v}):= (4\pi (t-s))^{-\frac{n}{2}} e^{\int_{s}^{t} a(\tau)\, d\tau }\exp \Biggl(-\frac{| x-y|^{2}}{4(t-s)}\Biggr),\quad 
t>s,\ {\bf u},\ {\bf v} \in \mathbb{R}^{n} ;
$$
see \cite{davies} for more details.
Hence, for every $ \tau \in \mathbb{R}^{n} $, we have  
$$
 K(t,s,{\bf u}+\tau,{\bf v}+\tau)=K(t,s,{\bf u},{\bf v}),\quad   t>s\geq 0,\ {\bf u},\ {\bf v} \in \mathbb{R}^{n} .
$$
It is well known that, under certain assumptions, a unique mild solution of the abstract Cauchy problem 
$
(\partial /\partial t)u(t,x) = A(t)u(t,x) ,$ $t > 0;$ $u(0,x) = F(x)$
is given by
$
u(t,x):=[U(t,0)F](x),$ $t\geq 0,$ $x\in {\mathbb R}^{n}.$
Suppose now that $F\in L^{r}({\mathbb R}^{n}) \cap (e-)W_{[0,1]^{n},\Lambda'}^{(p,x,{\mathbb F})}({\mathbb R}^{n} : {\mathbb C}),$
where $1\leq p<\infty,$ $\emptyset \neq \Lambda' \subseteq {\mathbb R}^{n}$ and the function ${\mathbb F}(l,{\bf t})\equiv {\mathbb F}(l)$ does not depend on ${\bf t}$ (at this place, it is worth noting that, in the usual Bohr or Stepanov concept, this immediately yields $F\equiv 0$). Let $1/p+1/q=1$ and let $\epsilon>0$ be given.
Then there exist two finite real numbers
$l>0$
and
$L>0$ such that for each ${\bf t}_{0}\in \Lambda'$ there exists $\tau \in B({\bf t}_{0},L)\cap \Lambda'$ such that
\begin{align*}
\sup_{{\bf t}\in {\mathbb R}^{n}}{\mathbb F}(l) \bigl| F({\bf \tau}+{\bf u})-F({\bf u}) \bigr|_{L^{p}({\bf t}+l[0,1]^{n})} <\epsilon.
\end{align*}
Therefore, for every $t>0,$ $l>0$ and ${\bf u},\ {\tau}\in {\mathbb R}^{n},$ there exists a finite real constant $c_{t}>0$ such that:
\begin{align*}
& |u(t,{\bf u}+\tau)-u(t,{\bf u})|=\Biggl| \int_{{\mathbb R}^{n}}\bigl[ K(t,0,{\bf u}+\tau,{\bf v}) -K(t,0,{\bf u},{\bf v})\bigr] F({\bf v})\, d{\bf v} \Biggr|
\\& =\Biggl| \int_{{\mathbb R}^{n}}K(t,0,{\bf u}+\tau,{\bf v}+\tau) F({\bf v}+\tau)\, d{\bf v} -\int_{{\mathbb R}^{n}}K(t,0,{\bf u},{\bf v}) F({\bf v})\, d{\bf v} \Biggr|
\\& =\Biggl| \int_{{\mathbb R}^{n}}K(t,0,{\bf u},{\bf v}) \bigl[F({\bf v}+\tau)\, d{\bf v} - F({\bf v})\bigr]\, d{\bf v} \Biggr|
\\& \leq c_{t}\int_{{\mathbb R}^{n}}e^{-\frac{|{\bf u}-{\bf v}|^{2}}{4t}}|F({\bf v}+\tau)-F({\bf v})|\, d{\bf v}=c_{t}\sum_{k\in l{\mathbb Z}^{n}}\int_{k+l[0,1]^{n}}e^{-\frac{|{\bf u}-{\bf v}|^{2}}{4t}}|F({\bf v}+\tau)-F({\bf v})|\, d{\bf v}
\\& \leq c_{t}\sum_{k\in l{\mathbb Z}^{n}}\Bigl\| e^{-\frac{|{\bf u}-\cdot|^{2}}{4t}}\Bigr\|_{L^{q}(k+l[0,1]^{n})}\Bigl\| F(\cdot+\tau)-F(\cdot)\Bigr\|_{L^{p}(k+l[0,1]^{n})}
\\& \leq c_{t}\frac{\epsilon}{{\mathrm F}(l)}\sum_{k\in l{\mathbb Z}^{n}}\Bigl\| e^{-\frac{|{\bf u}-\cdot|^{2}}{4t}}\Bigr\|_{L^{q}(k+l[0,1]^{n})}:=c_{t}\frac{\epsilon}{{\mathbb F}(l)}G(l,{\bf u}).
\end{align*}
The convergence of series defining $G(l,{\bf u})$ can be simply justified by the fact that for each $k\in l{\mathbb Z}^{n}$ with a sufficiently large absolute value we have $|{\bf u}-k-{\bf v}|\geq |k|-l-|{\bf u}|$ for all ${\bf v}\in l[0,1]^{n}.$  Now we will fix a number $t>0$ and a new exponent $p'\in [1,\infty).$ Since the function ${\bf u} \mapsto G(l,{\bf u}),$ ${\bf u}\in {\mathbb R}^{n}$ is continuous and positive for every fixed $l>0,$ we can define the function ${\mathbb F}_{1}(\cdot ; \cdot)$ by
$$
{\mathbb F}_{1}(l,{\bf t}):=\frac{{\mathbb F}(l)}{\Bigl( \int_{{\bf t}+l[0,1]^{n}}G(l,{\bf u})^{p'}\, d{\bf u} \Bigr)^{1/p'}},\quad l>0.
$$
By the above given argumentation, we immediately get from the corresponding definition that the mapping $x\mapsto u(t,x),$ $x\in {\mathbb R}^{n}$ belongs to the class $(e-)W_{[0,1]^{n},\Lambda'}^{(p',x,{\mathbb F}_{1})}({\mathbb R}^{n} : {\mathbb C}).$

\section{Conclusions and final remarks}\label{prinuda}

This paper investigates various classes of multi-dimensional Weyl almost periodic type functions in Lebesgue spaces with variable exponents.
We pay special attention to the analysis of constant coefficient case, providing also
some applications to
the integro-differential equations.

Let us mention, finally, a few intriguing topics which have not been discussed here. Composition theorems for Weyl almost periodic type functions were considered by F. Bedouhene, Y. Ibaouene, O. Mellah, P. Raynaud de Fitte \cite{mellah} and M. Kosti\' c \cite{composition}  in the one-dimensional setting; we have not analyzed  the multi-dimensional analogues of the results established in these research studies (although considered Weyl almost periodic type functions depend on two parameters, ${\bf t}\in {\mathbb R}^{n}$ and $x\in X,$ the applications to semilinear Cauchy equations and inclusions are not examined here, as well).
On the other hand, in \cite[Section 6]{deda}, the authors have presented several results and examples about the relationship between one-dimensional Weyl almost periodic type functions and one-dimensional Besicovitch almost periodic type functions (concerning Besicovitch almost periodic functions on ${\mathbb R}^{n}$ and general topological groups, the reader may consult the important research monograph \cite{pankov} by A. A. Pankov; in this monograph, we have find many intriguing applications of multi-dimensional Besicovitch almost periodic functions to evolution variational inequalities, positive boundary value problems for symmetric hyperbolic systems and 
nonlinear Schr\"odinger equations). For the sake of brevity and better exposition, we will skip all details concerning this theme in the multi-dimensional framework. Also, many crucial properties and important counterexamples in the theory of one-dimensional Stepanov, Weyl and Besicovitch almost periodic type functions have been established by
H. Bohr and E. F$\o$lner in their landmark paper \cite{bohr-folner}; for example, for any real number $P>1,$ the authors of this paper have constructed a locally integrable function $f: {\mathbb R} \rightarrow {\mathbb R}$ which is Stepanov $p$-almost periodic for any exponent $p\in [1,P)$ but not equi-Weyl-$P$-almost periodic (see \cite[Main example 3, pp. 83--91]{bohr-folner}). We have not been able to reconsider here such exotic examples in the multi-dimensional setting (it is also worth noting that L. I. Danilov \cite{danilov-weyl} and H. D. Ursell \cite{ursell} have established two interesting characterizations of equi-Weyl-$p$-almost periodic functions as well as that the notion of Weyl almost periodicity has been investigated by A. Iwanik \cite{iwanik} within the field of topological dynamics, as emphasized earlier in \cite{nova-mono}).


\begin{thebibliography}{99}

\bibitem{sead-abas}
S. Abbas,
A note on Weyl pseudo almost automorphic
functions and their properties,
Math. Sci. (Springer) {\bf 6}:29 (2012), 5 pp, doi:10.1186/2251-7456-6-29.

\bibitem{deda}
J. Andres, A. M. Bersani, R. F. Grande,
Hierarchy of almost-periodic function spaces,
Rend. Mat. Appl. (7) {\textbf 26} (2006), 121-188.

\bibitem{dedaci}
J. Andres, A. M. Bersani, K. Le\' sniak,
On some almost-periodicity problems in various
metrics, Acta Appl. Math. \textbf{65} (2001), 35--57.

\bibitem{basit-duo-gue}
B. Basit, H. G\"uenzler,
Generalized vector valued almost periodic
and ergodic distributions,
J. Math. Anal. Appl. {\bf 314} (2006), 363--381.

\bibitem{mellah}
F. Bedouhene, Y. Ibaouene, O. Mellah, P. Raynaud de Fitte,
Weyl almost periodic solutions to abstract linear and
semilinear equations with Weyl almost periodic coefficients,
Math. Method. Appl. Sci. {\bf 41} (2018), 9546--9566.

\bibitem{besik} 
A. S. Besicovitch,  
Almost Periodic Functions, 
Dover Publ, New York, 1954.

\bibitem{besik-bor} 
A. S. Besicovitch, H. Bohr,
Almost periodicity and general trigonometric
series, Acta Math. {\bf 57} (1931), 203--292.

\bibitem{bohr-folner}
H. Bohr, E. F$\o$lner,
On some types of functional spaces: A contribution to the theory of almost periodic functions,
Acta Math. {\bf 76} (1944) 31--155.

\bibitem{marko-manuel-ap}
A. Ch\'avez, K. Khalil, M. Kosti\'c, M. Pinto,
$({\mathrm R},{\mathcal B})$-Multi-almost periodic type functions and applications, preprint. arXiv:2012.00543.

\bibitem{stmarko-manuel-ap}
A. Ch\'avez, K. Khalil, M. Kosti\'c, M. Pinto, 
Stepanov $({\mathrm R},{\mathcal B})$-multi-almost periodic type functions and applications, preprint. hal-03035195.

\bibitem{danilov-weyl}
L. I. Danilov,
On Weyl almost periodic selections of
multivalued maps,
J. Math. Anal. Appl. {\bf 316} (2016), 110--127.

\bibitem{davies}  
E. B. Davies, 
Heat Kernels and Spectral Theory, 
Cambridge University Press, Cambridge, 1989.

\bibitem{diagana}
T. Diagana, 
Almost Automorphic Type and Almost Periodic Type Functions in Abstract Spaces,
Springer-Verlag, New York, 2013.

\bibitem{toka-mbape} 
T. Diagana, M. Kosti\' c,
Generalized almost periodic and generalized asymptotically almost periodic type functions in Lebesgue spaces with variable exponents
$L^{p(x)}$, 
Filomat {\bf 34} (2020), 1629--1644.

\bibitem{toka-mbape-prim} 
T. Diagana, M. Kosti\' c,
Generalized almost automorphic and generalized asymptotically almost automorphic type functions in Lebesgue spaces with variable exponents $L^{p(x)},$
Book chapter in: Recent Studies in Differential Equations, Nova Science Publishers, New York, in press.

\bibitem{m-zitane}
T. Diagana, M. Zitane,
Weighted Stepanov-like pseudo-almost periodic functions in Lebesgue space with variable exponents $L^{p(x)},$
Afr. Diaspora J. Math. {\bf 15} (2013), 56--75.

\bibitem{m-zitane-prim}
T. Diagana, M. Zitane,
Stepanov-like pseudo-almost automorphic functions in Lebesgue spaces with variable exponents  $L^{p(x)},$
Electron. J. Differential Equations 2013, no. {\bf 188}, 20 pp.

\bibitem{variable}
L. Diening, P. Harjulehto, P. H\"ast\"uso, M. Ruzicka, 
Lebesgue and Sobolev Spaces with Variable
Exponents, 
Lecture Notes in Mathematics, 2011. Springer, Heidelberg, 2011.

\bibitem{sever-sever}
S. S. Dragomir, M. A. Khan, A. Abathun,
Refinement of the Jensen integral inequality,
Open Math. {\bf 14} (2016), 221--228.

\bibitem{fan-zhao}
X. L. Fan, D. Zhao, 
On the spaces $L^{p(x)}(O)$ and $W^{m,p(x)}(O),$
J. Math. Anal. Appl. {\bf 263}
(2001), 424--446.

\bibitem{fedorov-novi}
V. Fedorov, M. Kosti\' c,
A note on (asymptotically) Weyl-almost periodic 
properties of convolution products,
Chelyabinsk Phy. Math. J. {\bf 4} (2019), 
195--206.

\bibitem{fink}
A. M. Fink, 
Almost Periodic Differential Equations,
Springer-Verlag, Berlin, 1974.

\bibitem{gaston}
G. M. N'Gu\' er\' ekata, 
Almost Automorphic and Almost Periodic Functions
in Abstract Spaces, 
Kluwer Acad. Publ, Dordrecht, 2001.

\bibitem{guter}
R. S. Guter, L. D. Kudryavtsev, B. M. Levitan, 
Elements of the Theory
of Functions, Fizmatlit, Moscow, 1963 (in Russian).

\bibitem{iwanik}
A. Iwanik,
Weyl almost periodic points in topological dynamics,
Colloq. Math. {\bf 56}
(1988), 107--119.

\bibitem{nova-mono}
M. Kosti\'c,
Almost Periodic and Almost Automorphic Type Solutions to Integro-Differential Equations,
W. de Gruyter, Berlin, 2019.

\bibitem{nova-selected}
M. Kosti\'c,
Selected Topics in Almost Periodicity,
Book Manuscript, 2020.

\bibitem{composition}
M. Kosti\'c,
Composition principles for generalized almost periodic functions,
Bull. Cl. Sci. Math. Nat. Sci. Math.
{\bf 43} (2018), 65--80.

\bibitem{bjma}
M. Kosti\'c,
Weyl-almost periodic solutions and asymptotically Weyl-almost periodic solutions of abstract Volterra integro-differential equations,
Banach J. Math. Anal. {\bf 13} (2019), 64--90.

\bibitem{weyl-varible}
M. Kosti\'c,
Asymptotically Weyl almost periodic functions in Lebesgue spaces with variable exponents,
J. Math. Anal. Appl. {\bf 498} (2021), 124961, in press, https://doi.org/10.1016/j.jmaa.2021.124961.

\bibitem{dumath}
M. Kosti\'c, W.-S. Du,
Generalized almost periodicity in Lebesgue spaces with variable exponents, in: 
Fixed Point Theory and Dynamical Systems with Applications, special issue of Mathematics,
Mathematics {\bf 8} (2020), 928; doi:10.3390/math8060928.

\bibitem{dumath2}
M. Kosti\'c, W.-S. Du,
Generalized almost periodicity in Lebesgue spaces with variable exponents, Part II,
in: Fixed Point Theory and Dynamical Systems with Applications, special issue of Mathematics,
Mathematics {\bf 8(7)} (2020), 1052; https://doi.org/10.3390/math8071052.

\bibitem{multi-ce}
M. Kosti\'c,
Multi-dimensional $c$-almost periodic type functions and applications,
preprint. arXiv:2012.15735.

\bibitem{kovanko}
A. S. Kovanko,
Sur la compaci\' e des sys\' emes de fonctions presque p\' eriodiques
g\' e\' eralis\' ees de H. Weyl, C.R. (Doklady) Ac. Sc. URSS {\bf 43} (1944), 275--276.

\bibitem{kovanko1951}
A. S. Kovanko,
On convergence of sequences of functions in the sense of Weyl's
metric $D_{W_{\omega}}$,
Ukrainian Math. J. {\bf 3} (1951), 465--476 (in Russian).

\bibitem{kovanko1953}
A. S. Kovanko,
On compactness of systems of generalized almost-periodic functions
of Weyl,
Ukrainian Math. J. {\bf 5} (1953), 185--195 (in Russian).

\bibitem{kovanko1953}
A. S. Kovanko,
On compactness of systems of generalized almost-periodic functions
of Weyl,
Ukrainian Math. J. {\bf 5} (1953), 185--195 (in Russian).

\bibitem{TIMOL}
D. Lenz, T. Spindeler, N. Strungaru,
Pure point diffraction and mean, Besicovitch and Weyl almost periodicity,
preprint. arXiv:2006.10821.

\bibitem{188}
M. Levitan,
Almost Periodic Functions,
G.I.T.T.L., Moscow, 1953 (in Russian).

\bibitem{doktor}
P. Q. H. Nguyen,
On variable Lebesgue spaces,
Thesis Ph.D., Kansas State University. Pro-
Quest LLC, Ann Arbor, MI, 2011. 63 pp.

\bibitem{pankov}
A. A. Pankov,
Bounded and Almost Periodic Solutions of Nonlinear Operator
Differential Equations, Kluwer Acad. Publ., Dordrecht, 1990.

\bibitem{TIMO}
T. Spindeler,
Stepanov and Weyl almost periodicity functions in locally compact
Abelian groups,
preprint. arXiv:2006.07266v1.

\bibitem{jstryja}
J. Stryja, 
Analysis of Almost-Periodic Functions, 
Mgr. Thesis, Palack\'y University,
Olomouc, 2001 (in Czech).

\bibitem{ursell}
H. D. Ursell,
Parseval's theorem for almost-periodic functions,
Proc. London Math. Soc. {\bf 2} (1931), 
402--440.

\bibitem{30}
S. Zaidman, 
Almost-Periodic Functions in Abstract Spaces,
Pitman Research Notes in
Math, Vol. \textbf{126}, Pitman, Boston, 1985.

\end{thebibliography}
\end{document}